     \documentclass[12pt]{amsart}
\usepackage{}

\usepackage{amsmath}
\usepackage{amsfonts}
\usepackage{amssymb}
\usepackage{graphicx}
\usepackage[all,cmtip]{xy}           
\usepackage{bm}
\usepackage{bbm}

\usepackage{bbding}
\usepackage{txfonts}
\usepackage{amscd}
\usepackage[all]{xy}
\usepackage[shortlabels]{enumitem}
\usepackage{ifpdf}
\ifpdf
\usepackage[colorlinks,final,backref=page,hyperindex]{hyperref}
\else
\usepackage[colorlinks,final,backref=page,hyperindex,hypertex]{hyperref}
\fi
\usepackage{tikz}
\usetikzlibrary{positioning}
\usepackage[active]{srcltx}
\usepackage{caption}
\usepackage{tikz-cd}
\usepackage{dsfont}
\usepackage{mathrsfs}

\makeatletter

\theoremstyle{definition}

\newtheorem{thm}{Theorem}[section]
\newtheorem{lem}[thm]{Lemma}
\newtheorem{cor}[thm]{Corollary}
\newtheorem{pro}[thm]{Proposition}
\newtheorem{ex}[thm]{Example}
\newtheorem{rmk}[thm]{Remark}
\newtheorem{defi}[thm]{Definition}

\newcommand {\emptycomment}[1]{}

\newcommand{\be }{\begin{equation}}
	\newcommand{\ee }{\end{equation}}

\newcommand{\g}{\mathfrak g}

\newcommand{\huaL}{\mathcal{L}}

\newcommand{\huaV}{\mathcal{V}}

\newcommand{\huaP}{\mathcal{P}}

\newcommand{\huaH}{\mathcal{H}}

\newcommand{\huaO}{{\mathcal{O}}}

\newcommand{\frkC}{\mathfrak C}

\newcommand{\frkM}{\mathfrak M}
\newcommand{\frkN}{\mathfrak N}

\newcommand{\br}[1]{   [ \cdot,    \cdot  ]   }
\newcommand{\id}{\mathrm{id}}

\newcommand{\Hom}{\mathrm{Hom}}
\newcommand{\End}{\mathrm{End}}

\newcommand{\K}{\mathbb{K}}

\newcommand{\ot}{\otimes}

\newcommand{\s}{\mathbb{S}}

\newcommand{\op}{\mathcal{P}}
\newcommand{\opup}{\mathcal{P}^{\textup{!}}}
\newcommand{\ceo}{{\rm CE}^{\bullet}}
\newcommand{\Sh}{{\rm Sh}}

\newcommand{\dsN}{\mathds{N}}
\newcommand{\ce}{{\rm CE}}

\newcommand{\ord}{{\rm ord}}

\newcommand{\scrC}{{\mathscr{C}}}

\newcommand{\ff}{f_{(1,1)}}
\newcommand{\nov}{{\rm Nov}}
\newcommand{\ome}{\omega}
\newcommand{\lin}{{\rm Lin}}
\newcommand{\cc}{\circ}
\newcommand{\pl}{{\rm pLie}}

\begin{document}
	
\title[Cohomology theory of Novikov algebras and applications]{Cohomology theory of Novikov algebras and applications}

\author{Pavel Kolesnikov}
\address{Sobolev Institute of Mathematics, Acad. Koptyug ave. 4, Novosibirsk, Russia}
\email{pavelsk77@yandex.ru}

\author{Yue Li}
\address{Department of Mathematics, Jilin University, Changchun 130012, Jilin, China}
\email{liyue25@mails.jlu.edu.cn}

\author{Yunhe Sheng}
\address{Department of Mathematics, Jilin University, Changchun 130012, Jilin, China}
\email{shengyh@jlu.edu.cn}

\author{Nanyan Xu}
\address{Department of Mathematics, Jilin University, Changchun 130012, Jilin, China}
\email{xuny23@jlu.edu.cn}

\begin{abstract}
	In this paper, first we give a new characterization of the cohomology of pre-Lie algebras using the Chevalley-Eilenberg cohomology associated to a morphism from the operad of Lie algebras to Hadamard product of the operad of pre-Lie algebras and its Koszul dual operad. Then we apply the same approach to study the cohomology of Novikov algebras, and give the cochain complex explicitly. The cochain complex of the underlying pre-Lie algebra is shown to be isomorphic to
	 the quotient of the cochain complex of a Novikov algebra. Consequently, there is a long exact sequence connecting the cohomologies of a Novikov algebra and the underlying pre-Lie algebra. The cohomology of a Novikov algebra with coefficients in a representation is introduced using pseudo-tensor categories. As applications, we show that infinitesimal deformations and abelian extensions are classified by the second cohomology groups with different coefficients. Various examples are given to illustrate the difference between the cohomology of a Novikov algebra and that of the underlying pre-Lie algebra.
\end{abstract}

\renewcommand{\thefootnote}{}
\footnotetext{2020 Mathematics Subject Classification.
17B56 
18M60 
}

\keywords{Novikov algebra, pre-Lie algebra, cohomology, operad, deformation, abelian extension}

\maketitle

\tableofcontents

\allowdisplaybreaks

\section{Introduction}

This paper aims to provide an explicit description of the cochain complex associated with Novikov algebras, establish connections between the cohomology of a Novikov algebra and that of the underlying pre-Lie algebra, and present its applications to deformation and extension problems. This lays the foundation for future study on the general deformation and homotopy theory of Novikov algebras.

\subsection{Novikov Algebras}
Novikov algebras constitute a distinguished class of nonassociative algebras, whose discovery was deeply rooted in the study of Hamiltonian operators in the formal
calculus of variations \cite{GD,GD1}. In a closely related direction, Dubrovin and Novikov developed the Hamiltonian formalism for one-dimensional dynamic systems of hydrodynamic type and introduced a class of local Poisson brackets whose coefficients admit a differential geometric interpretation \cite{Du-No}. Subsequently, Balinsky and Novikov related linear Poisson brackets of hydrodynamic type to finite-dimensional nonassociative algebraic structures satisfying the identities that are now called Novikov algebras \cite{BN}, where the terminology ``Novikov algebra'' was adopted by Osborn \cite{Osborn1992,O1994}.

Zel'manov gave a fundamental structure theory of finite-dimensional Novikov algebras over an algebraically closed field of characteristic 0 \cite{Z}. Osborn developed the theory of modules for Novikov algebras \cite{O1995}, while Xu obtained classification results for simple Novikov algebras and their irreducible modules in both prime characteristic and characteristic zero \cite{XuXP1996,XuXP2001}.
It was shown in \cite{XuXP2000} that Novikov algebras are closely related to a class of Lie conformal algebras arising from the singular part of the operator product expansion of chiral fields in conformal field theory. Bai and Meng have done an in-depth study of the realization of Novikov algebras \cite{BaiM,BaiM2}. Free Novikov algebras have also attracted considerable attention. Their bases were described in terms of trees \cite{DL}, and module structures of a free Novikov algebra over permutation group and general linear group are investigated in \cite{DI}. Another development is the study of Novikov bialgebras, which are related through affinization to infinite-dimensional Lie bialgebras \cite{H-B-G}. Recently, Bruned and Dotsenko related multi-indices arising from singular stochastic partial differential equations to free multi-Novikov algebras \cite{Br-Do}. A unified approach to the structure theory of simple and semisimple Novikov algebras over an arbitrary field has been established in \cite{ZhP2025}. Together, these results illustrate the continuing development of the structural theory of Novikov algebras.

Novikov algebras are special pre-Lie algebras, which are of considerable independent interest in their own right. See  \cite{Bur} for a comprehensive survey of pre-Lie algebras. 
Beyond the properties inherited from the pre-Lie structure, the additional Novikov identity gives rise to a rich theory specific to this class of algebras.

\vspace{-3mm}

\subsection{Cohomology theory of algebraic structures}

Cohomology theory is the primary mathematical tool for studying deformation problems, classifying extensions, and computing invariants of algebraic structures.
The classical cohomology theories of associative algebras, Lie algebras and commutative algebras were developed by Hochschild \cite{Hor}, Chevalley-Eilenberg \cite{CE1948} and  Harrison \cite{Har}. Later, Gerstenhaber \cite{Ger1964} and Nijenhuis-Richardson \cite{NR} developed the deformation theories of associative algebras and Lie algebras respectively and established the relation with corresponding cohomologies.
Loday and  Pirashvili \cite{Loday} established the cohomology theory for Leibniz algebras. Dzhumadil'daev developed the cohomology theory for right-symmetric algebras (right pre-Lie algebras) with coefficients in their modules, related it to the cohomology of the associated Lie algebras, and established the corresponding deformation theory \cite{D1999}.

Operad theory provides a unified framework for describing algebraic structures in terms of their operations and defining relations, and it also gives a general approach to the construction of cohomology theories for many types of algebras.
Ginzburg and Kapranov established Koszul duality for quadratic operads and showed that there is always a morphism from the operad $Lie$ to the Hadamard product of a quadratic operad $\mathcal{P}$ and  its Koszul dual operad $\mathcal{P}^{\textup{!}}$ \cite{GK}. Balavoine further developed the deformation and cohomological theory of algebras governed by quadratic operads, providing a general operadic framework for studying their infinitesimal deformations and related cohomological structures \cite{Bala1995,Bala1998}. More precisely, given a morphism from an operad $Lie$ to an operad $\huaO$, one can construct a Chevalley-Eilenberg cochain complex $\ce^\bullet(\mathcal{O})$. Combined with Ginzburg and Kapranov's result, given a $\huaP$-algebra $V$, equivalently a morphism from the operad $\huaP$ to the operad $\End_V$, one obtain a cochain complex $\ce^\bullet(\mathcal{P}^{\textup{!}}\underset{\rm H}{\ot}\End_V)$. Applying this approach to associative algebras, Lie algebras, commutative algebras and Leibniz algebras, one can recover the aforementioned  classical cohomology theories respectively \cite[Section 3]{Bala1995}.
Furthermore, cohomology with coefficients in arbitrary representations was developed intrinsically in the setting of pseudo-tensor categories  \cite{BDK,BKV,BD}.
This approach provides us the most natural explanation of what is a cohomology for conformal and vertex algebras \cite{BKV,DBKH}.

 \vspace{-3mm}

\subsection{Establishing the cohomology theory for Novikov algebras via operadic theory}
In this paper, we study the cohomology of Novikov algebras using the operadic theory.
Based on the fact that the Koszul dual of the operad $Nov$ of left Novikov algebras is the operad $rNov$ of right Novikov algebras \cite{D}, applying Balavoine and Bakalov-Kac-Voronov's approaches, one can obtain the Chevalley-Eilenberg cochain complex $\ce^\bullet(rNov\underset{\rm H}{\ot}\End_V)$ associated to a Novikov algebra $V$. Note that the operad $rNov$ can be realized as a suboperad of the operad of commutative associative differential algebras, it follows that the operad $rNov$ has a good basis (see \eqref{eq:basis-rNov}), which enables us to extract the cochain complex of Novikov algebras from the above Chevalley-Eilenberg cochain complex. This is the main contribution of the paper. Compared with pre-Lie algebras, Novikov algebras satisfy the additional right-commutativity identity, which leads to a substantially more involved structure of the cochain complex and its differential.

To better understand the relation between the cohomology of a Novikov algebra and  that of  the underlying pre-Lie algebra, we first apply the above approach to pre-Lie algebras, and illustrate how one can obtain the cochain complex of a pre-Lie algebra from the Chevalley-Eilenberg cochain complex $\ce^n(Perm\underset{\rm H}{\ot}\End_V)$. Here $Perm$ is the operad of perm algebras, which is the  Koszul dual operad of the operad $preLie$ of pre-Lie algebras. Note that a perm algebra is naturally a right Novikov algebra, equivalently, there is a surjective morphism from the operad $rNov$ to $Perm$. This morphism naturally induces a cochain map from $\ce^\bullet(rNov\underset{\rm H}{\ot}\End_V)$ to $\ce^\bullet(Perm\underset{\rm H}{\ot}\End_V)$, which shows that the cochain complex of the underlying pre-Lie algebra is isomorphic to the  quotient  of the cochain complex of a Novikov algebra. The short exact sequence of cochain complex naturally induces a long exact sequence of cohomology groups, which connects the cohomologies between a Novikov algebra and the underlying pre-Lie algebra.

We further use the pseudo-tensor category framework to give the cohomology of a Novikov algebra with coefficients in a representation. Applications of the cohomology theory are given to classify infinitesimal deformation and abelian extension problems using the second cohomology groups with different coefficients.  Note that the deformation equations were already given in Bai and Meng's work \cite{BaiM2} in the study of realization of certain Novikov algebras without using the cohomology theory.  On the other hand,
 recently Peng and Tan developed a Chevalley-Eilenberg type cohomology for Novikov algebras \cite{PT2024} from the associated Lie-algebraic viewpoint rather than directly from the defining relations of Novikov algebras. Moreover, the classification of abelian extensions requires an additional condition on second cohomology classes, showing that the full Novikov structure is not encoded by the second cohomology group alone.

 Our research open a gate to study the general deformation and homotopy theory of Novikov algebras. With the explicit deformation complex given in this paper, we conjecture that there is a graded Lie algebra structure on this deformation complex whose Maurer-Cartan elements corresponds to Novikov algebra structures.

\subsection{Organization of the paper and notations.} The paper is organized as follows. In Section \ref{sec:pL}, we give a new characterization of the cohomology of pre-Lie algebras using the Chevalley-Eilenberg cohomology associated to the morphism from the operad $Lie$ to the operad $Perm\underset{\rm H}{\ot} preLie$, the Hadamard product of the operad $Perm$ and the operad $preLie$. In Section \ref{sec:nov}, we apply this operadic approach to give explicit descriptions of the cochain complex for Novikov algebras. In particular, the cochain complex for the underlying pre-Lie algebra is shown to be the quotient complex of the cochain complex for the Novikov algebra. In Section \ref{sec:def}, we study infinitesimal deformations of a Novikov algebra using the established cohomology theory, and show that infinitesimal deformations of a Novikov algebra are classified by the second cohomology group. In Section \ref{sec:rep}, we study the cohomology of a Novikov algebra with coefficients in a representation using the pseudo-tensor category framework. In Section \ref{sec:ext}, we show that abelian extensions of Novikov algebras are classified by the second cohomology group.

Throughout this paper, unless otherwise specified, $\mathds{K}$ is a field of characteristic $0$. All vector spaces and algebras are over $\mathds{K}$. Denote the set of all integers by $\mathds{Z}$.  The identity map is denoted by $\id$, and $\dsN=\{0,1,2,\ldots\}$. For any $n\in \mathds{N}^{\geq1}$, the set $[n]=\{1,2,\cdots,n\}$.

 For any integer partition $i_1+\cdots+i_k=n$ and any $\sigma_1\in \s_{i_1},\cdots, \sigma_k\in \s_{i_k}$, the notation $\sigma_1\times\cdots\times\sigma_k$ is an element of $\s_n$ given by
\small{
\begin{align}\label{def-aaaSn}
\begin{pmatrix}
1          &\cdots&i_1       & i_1+1           &\cdots& i_1+i              &\cdots    &n\\
\sigma_1(1)&\cdots&\sigma_1(i_1)& i_1+\sigma_2(1) &\cdots& i_1+\sigma_2(i_2) &\cdots    &i_1+\cdots+i_{k-1}+\sigma_{k}(i_k)
			\end{pmatrix}
\end{align}}
The set of all permutations of this form is denoted by $\mathbb{S}_{i_1}\times\cdots\times\mathbb{S}_{i_k}$.

A shuffle of type $(i_1, i_2, \cdots, i_k)$ with $i_1 + \cdots + i_k = n$ is a permutation $\sigma \in \mathbb{S}_n$ such that
\begin{align*}
	 \sigma(1)<\cdots<\sigma(i_1), \quad
	  \sigma(i_1+1)<\cdots<\sigma(i_1+i_2), \quad
	  \quad  \cdots,  \quad
	  \sigma(i_1+\cdots+i_{k-1}+1)<\cdots<\sigma(n).
\end{align*}
The set of all such permutations is denoted by $\operatorname{Sh}(i_1,i_2,\cdots,i_k)$. Denote $\Sh^{-1}(i_1,i_2,\cdots,i_k):=\{\sigma\in\s_n\mid\sigma^{-1}\in\Sh(i_1,i_2,\cdots,i_k)\}.$

For distinct elements $i_1,\ldots,i_k\in\{1,\ldots,n\}$, denote by
$(i_1\,i_2\,\cdots\,i_k)\in\s_n$ the cycle permutation satisfying
\[
i_1\mapsto i_2\mapsto\cdots\mapsto i_k\mapsto i_1,
\]
and fixing all elements outside $\{i_1,i_2,\cdots,i_k\}$.

\section{A new characterization of the cohomology of pre-Lie algebras}\label{sec:pL}
In this section, we give a new characterization of the cohomology of pre-Lie algebras using the Chevalley-Eilenberg cohomology associated to the morphism of operads  $Lie \to Perm\underset{\rm H}{\ot} preLie$ .

 \begin{defi}
 	A vector space $\g$ with a binary operator $\diamond:\g\ot\g\to \g$ is called a (left) \textbf{pre-Lie algebra} if for all $x,y,z\in\g$,
 	\begin{align*}
 	(x\diamond y)\diamond z-x\diamond(y\diamond z)=(y\diamond x)\diamond z-y\diamond(x\diamond z).
 	\end{align*}
 \end{defi}

\begin{defi}\cite{D1999,GuLST}.
	Let $(\g,\diamond)$ be a pre-Lie algebra. The pair
	$(\frkC_{\pl}^\bullet(\g),d_\pl)$ is called the cochain complex
	of the pre-Lie algebra $(\g,\diamond)$, where the space of $n$-cochains
	$\frkC_{\pl}^n(\g)$ is defined by
	\begin{align}\label{eq:pLie-n-cochain}
		\frkC_{\pl}^n(\g):=\Hom(\wedge^{n-1}\g\otimes\g,\g),
	\end{align}
and	the coboundary operator
$
d_\pl^n:\frkC_{\pl}^n(\g)\longrightarrow\frkC_{\pl}^{n+1}(\g)
$
is defined as follows: for all $f\in\frkC_{\pl}^n(\g)$ and
$x_1,\ldots,x_{n+1}\in\g$,
\begin{align*}
(d_\pl^n f)(x_1,\ldots,x_n;x_{n+1})={}&\sum_{i=1}^n(-1)^{i+1}x_i\diamond f(x_1,\ldots,\hat{x_i},\ldots,x_n;x_{n+1})\\
&+\sum_{i=1}^n(-1)^{i+1}f(x_1,\ldots,\hat{x_i},\ldots,x_n;x_i)\diamond x_{n+1}\\
&+\sum_{i=1}^n(-1)^if(x_1,\ldots,\hat{x_i},\ldots,x_n;x_i\diamond x_{n+1})\\
&+\sum_{1\leq i<j\leq n}(-1)^{i+j}f(x_i\diamond x_j-x_j\diamond x_i,x_1,\ldots,\hat{x_i},\ldots,\hat{x_j},\ldots,x_n;x_{n+1}).
\end{align*}
\end{defi}

Perm algebras are closely related to pre-Lie algebras. A \textbf{perm algebra} $(A,\cdot)$ is a vector space $A$ with a binary operation $\cdot:A\ot A\to A$ satisfying
\[
(a_1\cdot a_2)\cdot a_3 = (a_2\cdot a_1)\cdot a_3 = a_1\cdot(a_2\cdot a_3),\quad \forall a_1,a_2,a_3\in A.
\]

In the sequel, we use the Chevalley-Eilenberg cohomology associated to certain morphism of operads to characterize the cohomology of pre-Lie algebras. First we recall some basic facts about symmetric operads.  Usually the word symmetric is omitted in this paper.
Recall that an $\s$-module is a family of vector spaces $M=\{M(n)\}_{n\geq 1}$, where each $M(n)$ is a right $\mathds{K}[\s_n]$-module. A morphism of $\s$-modules $f : M\to M'$ is a family of right $\mathds{K}[\s_n]$-module maps $\{f_n:M(n)\to M'(n)\}_{n\geq 1}$.
\begin{defi}
An \textbf{operad} is an $\s$-module $\mathcal{P} = \{\mathcal{P}(n)\}_{n \geq 1}$ with a composition map for all $n\geq 1$ and $i_1,\cdots,i_n\geq 1$
\begin{align*}
\circ : \mathcal{P}(n) \otimes \mathcal{P}(i_1) \otimes \cdots \otimes \mathcal{P}(i_n) \to \mathcal{P}(i_1 + \cdots + i_n)\\
(f;g_1,\cdots,g_n)\mapsto f\circ(g_1,\cdots,g_n)
\end{align*}
and an element $\id \in \mathcal{P}(1)$ which satisfy the following axioms:
for all $f\in\mathcal{P}(n)$, $g_1\in \mathcal{P}(i_1),\cdots,g_n\in \mathcal{P}(i_n)$, $h_{s,t}\in \mathcal{P}(j_{s,t})$, $1\geq s\geq n$, $1\geq t\geq i_s$,
\begin{itemize}
\item {\bf Associativity}:
\begin{align*}
&\quad f\circ\bigl(g_1\circ(h_{1,1},\cdots,h_{1,i_1}),\cdots, g_n\circ(h_{n,1},\cdots,h_{n,i_n})\bigr)\\
&=\bigl(f\circ(g_1,\cdots,g_n)\bigr)\circ(h_{1,1},\cdots,h_{1,i_1},\cdots,h_{n,1},\cdots,h_{n,i_n});
\end{align*}
\item {\bf Identity}:
$
\id\circ f=f=f\circ(\id,\cdots,\id);
$
\item {\bf Equivariance}:
(i)  for $\sigma\in\s_n$,
$
	f^\sigma\circ (g_{\sigma(1)},\cdots,g_{\sigma(n)})=(f\circ(g_1,\cdots,g_n))^{\sigma\langle i_{\sigma(1)},\cdots,i_{\sigma(n)}\rangle},
$
where $\sigma\langle i_{\sigma(1)},\cdots,i_{\sigma(n)}\rangle$ is the block permutation associated to $\sigma$;

(ii) for $\tau_1\in \s_{i_1},\cdots, \tau_n\in \s_{i_n}$,
$
	f\circ(g_1^{\tau_1},\cdots,g_n^{\tau_n})=\bigl(f\circ(g_1,\cdots,g_n)\bigr)^{\tau_1\times\cdots\times\tau_n}.
$
\end{itemize}
\end{defi}

Let $\mathcal{P},\mathcal{P}^{\prime}$ be two operads. A \textbf{morphism of operads} from $\mathcal{P}$ to $\mathcal{P}^{\prime}$ is a morphism of $\s$-modules $\alpha:\mathcal{P}\to \mathcal{P}'$ satisfying $\alpha(\id)=\id'$ and
\begin{align*}
\alpha(f\circ(g_1,\cdots,g_n))=\alpha(f)\circ(\alpha(g_1),\cdots,\alpha(g_n)).
\end{align*}

For any vector space $V$, the Endomorphism operad $\End_V$ is given by
$
\End_V(n)=\Hom(V^{\ot n},V),
$
the right action of $\sigma\in\mathbb{S}_n$ on $f\in\End_V(n)$ is given by
\begin{eqnarray}\label{action-on-map}
	f^\sigma(x_1,\cdots,x_n)=f(x_{\sigma^{-1}(1)},\cdots,x_{\sigma^{-1}(n)}),\quad\forall x_1,\cdots,x_n\in V,
\end{eqnarray}
and the composition map is given by
\begin{align*}
	\circ: \End(n) \otimes \End(i_1) \otimes \cdots \otimes \End(i_n) &\to \End(i_1 + \cdots + i_n)\\
	(f;g_1,\cdots,g_n)&\mapsto f\circ(g_1,\cdots,g_n):=f(g_1\ot \cdots\ot g_n).
\end{align*}

\begin{defi}\label{def-P-algebra}
A {\bf $\mathcal{P}$-algebra} structure on a vector space $V$ is a morphism of operads $\mathcal{P}\to \End_V$.
\end{defi}

Now we recall the cochain complex associated with certain operad morphism.

\begin{defi}\label{def-Lie-operad}
	The operad $Lie$ encoding Lie algebras is generated by a single antisymmetric element $\nu$ of arity $2$ satisfying the Jacobi identity:
	\begin{align*}
		\nu\cc(\nu, \id)+(\nu\cc(\nu,\id))^{(1\,2\,3)}+(\nu\cc(\nu,\id))^{(1\,3\,2)}=0.
	\end{align*}
\end{defi}

\begin{pro}\cite{BKV,Bala1995}\label{CEO-cochain}
	Let $\huaO$ be an operad and $\alpha:Lie\to\huaO$ a morphism of operads.
	The space of Chevalley-Eilenberg $n$-cochains of $\mathcal{O}$ is defined by
	\begin{align}\label{def-ce}
		\ce^n(\mathcal{O})=\{f\in \mathcal{O}(n)\mid f^\sigma=(-1)^\sigma f,\,\forall\sigma\in \mathbb{S}_n\}.
	\end{align}

For $n\geq 1$, define an operator $d_{\ce}^n:\ce^n(\mathcal{O})\to \ce^{n+1}(\mathcal{O})$ by
	\begin{eqnarray}\label{dce}
		&&d^n_{\rm CE}(f)=\underset{\sigma\in\Sh^{-1}(1,n)}{\sum}(-1)^{\sigma}\big(\alpha(\nu)\circ (\id,f)\big)^{\sigma}-\underset{\sigma\in\Sh^{-1}(2,n-1)}{\sum}(-1)^{\sigma}\big(f\circ(\alpha(\nu),\id,\cdots,\id)\big)^{\sigma},
	\end{eqnarray}
where $\Sh(k+1,n-k)$ is the set of all $(k+1,n-k)$-shuffle permutations from $S_{n+1}$.
	Then $(\ceo(\huaO),d^\bullet_{\ce})$ is a cochain complex.
\end{pro}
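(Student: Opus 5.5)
We must show two things about the operator $d^n_{\ce}$ of Proposition~\ref{CEO-cochain}: that it maps $\ce^n(\huaO)$ into $\ce^{n+1}(\huaO)$, and that $d^{n+1}_{\ce}\circ d^n_{\ce}=0$. The plan is to reduce to a universal computation. Write $\mu=\alpha(\nu)\in\huaO(2)$. Since $\alpha$ is a morphism of $\s$-modules and of operads, $\mu^{(1\,2)}=-\mu$, and $\mu$ satisfies the Jacobi identity of Definition~\ref{def-Lie-operad} inside $\huaO$. We then use only these two facts together with the operad axioms (associativity and both equivariance rules).

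\textbf{Step 1: antisymmetry of $d f$.} For $f\in\ce^n(\huaO)$, consider the two full antisymmetrizations
\[
\mathrm{Alt}\big(\mu\circ(\id,f)\big) \quad\text{and}\quad \mathrm{Alt}\big(f\circ(\mu,\id,\ldots,\id)\big)
\]
over $\s_{n+1}$. By the equivariance axioms, $\mu\circ(\id,f)$ is invariant up to sign under $1\times\s_n$, since $f$ is antisymmetric. Likewise, $f\circ(\mu,\id,\ldots,\id)$ is antisymmetric under $\s_2\times\s_{n-1}$, using the antisymmetry of both $\mu$ and $f$. The full sums therefore collapse to $n!$ times, respectively $2!(n-1)!$ times, the sums over coset representatives. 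These representatives are exactly the inverse shuffles $\Sh^{-1}(1,n)$ and $\Sh^{-1}(2,n-1)$. Hence $d^n_{\ce}f$ is, up to nonzero scalars (we are in characteristic $0$), a difference of antisymmetrized elements, so it lies in $\ce^{n+1}(\huaO)$. This step is routine bookkeeping with block permutations.

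\textbf{Step 2: $d^2=0$.} The cleanest route avoids working in a general $\huaO$. I would note that $\ce^\bullet(\huaO)$ can be identified with $\Hom_{\s}(\mathrm{sgn},\huaO)$. The differential is then the convolution-type bracket with $\mu$, namely $d f=[\mu,f]$, in the graded Lie algebra structure (Nijenhuis--Richardson type) carried by $\bigoplus_n\ce^n(\huaO)$. This bracket is defined by
\[
f\bar\circ g=\sum_{\sigma\in\Sh^{-1}(m,n-1)}(-1)^\sigma\big(f\circ(g,\id,\ldots,\id)\big)^\sigma,\qquad [f,g]=f\bar\circ g-(-1)^{(m-1)(n-1)}g\bar\circ f .
\]
First, check that $d^n_{\ce}f=\pm[\mu,f]$: the term $\mu\bar\circ f$ reproduces the first sum in \eqref{dce} after using $\mu^{(1\,2)}=-\mu$ to move $f$ into the second slot, and $f\bar\circ\mu$ reproduces the second sum. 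Second, prove that $\bar\circ$ is graded pre-Lie, i.e.\ that its associator is graded symmetric in the last two arguments. This follows from the associativity axiom, by splitting $(f\bar\circ g)\bar\circ h$ into the terms where $h$ is inserted into $g$ and those where $h$ is inserted into a remaining input of $f$. Consequently $[\cdot,\cdot]$ satisfies the graded Jacobi identity, which gives
\[
d^2f=[\mu,[\mu,f]]=\tfrac12\,[[\mu,\mu],f].
\]
Finally, $[\mu,\mu]=2\,\mu\bar\circ\mu$ is precisely the three-term Jacobi expression of Definition~\ref{def-Lie-operad}, which vanishes. Hence $d^2=0$.

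\textbf{Main obstacle.} The hard part is the sign and coset bookkeeping in the graded pre-Lie identity for $\bar\circ$. Specifically, one must verify that the "$h$ inserted into a different input of $f$" terms are symmetric under exchanging $g$ and $h$ with the correct Koszul sign, and that the shuffle sets compose correctly: $\Sh^{-1}(m,n-1)$ followed by $\Sh^{-1}(n,k-1)$ should enumerate the right double cosets. To keep this manageable, I would first pass to full antisymmetrizations over $\s_N$ as in Step 1, where the signs become transparent, and only then return to shuffle representatives, dividing by the factorial normalizations.
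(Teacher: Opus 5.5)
Your proof is correct, and it takes a different route from the paper only in the sense that the paper gives no argument at all: it imports this proposition from \cite{BKV,Bala1995} and uses it as a black box.

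Your Step~2 is the standard mechanism behind Balavoine's construction: write $d_{\ce}$ as $\pm\operatorname{ad}_\mu$ for a Nijenhuis--Richardson-type bracket, and identify $[\mu,\mu]=2\,\mu\bar\circ\mu$ with the Jacobiator. The signs you left as $\pm$ do work out.
\begin{itemize}
\item The first sum in \eqref{dce} equals $(-1)^{n+1}\mu\bar\circ f$, and the second sum is exactly $f\bar\circ\mu$. Hence $d^n_{\ce}f=(-1)^{n-1}[\mu,f]$ with the correct relative sign, not the graded anticommutator.
\item Put $X=\mu\circ(\mu,\id)$. Then $\mu\bar\circ\mu=X-X^{(2\,3)}+X^{(1\,3\,2)}$, and this is the Jacobi expression of Definition~\ref{def-Lie-operad}, because $X^{(1\,2\,3)}=(X^{(1\,2)})^{(2\,3)}=-X^{(2\,3)}$.
\end{itemize}
The graded right-symmetric identity you single out as the main obstacle is the classical Nijenhuis--Richardson identity. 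It holds with your Koszul sign $(-1)^{(m-1)(k-1)}$, so you can cite it rather than redo the double-coset bookkeeping.

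A shorter route avoids signs entirely, in the spirit of the remark after Proposition~\ref{pro:formular 2-d}. Let $A$ be the free $\huaO$-algebra on countably many generators. Then:
\begin{itemize}
\item the canonical operad morphism $\huaO\to\End_A$ is equivariant, and it is injective (evaluate on distinct generators);
\item it maps $\ce^n(\huaO)$ into $\Hom(\wedge^nA,A)$;
\item it carries $d_{\ce}$ to the classical Chevalley--Eilenberg differential of the Lie algebra $(A,\alpha(\nu))$ with adjoint coefficients.
\end{itemize}
So both the closure statement of your Step~1 and $d^2=0$ are inherited from the classical case.

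What your approach buys in exchange is a bracket on $\ce^\bullet(\huaO)$ that does not depend on $\mu$. Take $\huaO=rNov\underset{\rm H}{\ot}\End_\g$ and transport this bracket along $\Phi_n\Psi_n$. You get a graded Lie algebra on $\frkC^\bullet_\nov(\g)$ with $d_\nov=\pm[\omega,\,\cdot\,]$. Its Maurer--Cartan elements in $\frkC^2_\nov(\g)$ are exactly the Novikov products; this follows from the expansion in the proof of Lemma~\ref{Nov-to-Lie}, taking $M=\g$, $l=\omega$, $r=\omega^{(1\,2)}$. That is precisely the kind of structure the authors conjecture in the introduction.
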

For example,  for $f\in \ce^1(\mathcal{O})$, we have
$$
d^1_\ce(f)=\alpha(\nu)\circ(\id,f)-\big(\alpha(\nu)\circ (\id,f)\big)^{(1\,2)}-f\circ \alpha(\nu);
$$
for $f\in \ce^2(\mathcal{O})$, we have
\begin{align*}
	d^2_\ce(f)&=\alpha(\nu)\circ(\id,f)-\big(\alpha(\nu)\circ (\id,f)\big)^{(1\,2)}+\big(\alpha(\nu)\circ (\id,f)\big)^{(1\,2\,3)}\\
	&\quad-f\circ(\alpha(\nu),\id)+\big(f\circ(\alpha(\nu),\id)\big)^{(2\,3)}-\big(f\circ(\alpha(\nu),\id)\big)^{(1\,3\,2)}.
\end{align*}

Let $\g$ be a Lie algebra. Equivalently, there is a morphism of operads $Lie\to\End_\g$. The space $\ce^n(\End_\g)$ is precisely $\Hom(\wedge^n\g,\g)$, and the operator $d_{\rm CE}$ coincides with the classical Chevalley-Eilenberg coboundary operator. See \cite{BKV} for further details.

Let $\mathcal{P},\mathcal{Q}$ be two operads. The \textbf{Hadamard product} $\mathcal{P}\underset{\rm H}{\ot}\mathcal{Q}$ of the operads $\mathcal{P}$ and $\mathcal{Q}$ is given by
$$
(\mathcal{P}\underset{\rm H}{\ot}\mathcal{Q})(n):=\mathcal{P}(n)\ot \mathcal{Q}(n).
$$
The right action of $\s_n$ on $(\mathcal{P}\underset{\rm H}{\ot}\mathcal{Q})(n)$ is given by the diagonal action:
\begin{eqnarray*}
	(f\ot g)^\sigma=f^\sigma\ot g^\sigma,\quad f\in \mathcal{P}(n),\,g\in\mathcal{Q}(n),\,\sigma\in\s_n.
\end{eqnarray*}
The composition map is given by
\begin{align*}
	\circ: (\mathcal{P}\underset{\rm H}{\ot}\mathcal{Q})(n) \otimes (\mathcal{P}\underset{\rm H}{\ot}\mathcal{Q})(i_1) \otimes \cdots \otimes (\mathcal{P}\underset{\rm H}{\ot}\mathcal{Q})(i_n) &\to (\mathcal{P}\underset{\rm H}{\ot}\mathcal{Q})(i_1 + \cdots + i_n)\\
	(f\ot g;f_1\ot g_1,\cdots,f_n\ot g_n)&\mapsto (f\circ(f_1,\cdots,f_n))\ot (g\circ(g_1, \cdots, g_n)).
\end{align*}

\begin{thm}\cite{GK,LV}\label{Lie-pp}
	Let $\mathcal{P}$ be a binary quadratic operad and $\mathcal{P}^{\textup{!}}$ its Koszul dual operad. Then there is a morphism of operads $Lie\to \op\underset{\rm H}{\ot} \opup$ as follows:
	\begin{eqnarray*}
		&&\nu\mapsto \sum\limits_{e} e\ot e^*, \quad \nu\in Lie(2),
	\end{eqnarray*}
	where the sum is over a basis $\{e\}$ of $\op(2)$ and $\{e^*\}$ is the dual basis of $\op(2)^*$. Under the canonical identification $\op(2)^*\otimes\mathrm{sgn}_2=\op(2)^\vee=\op^!(2)$, we regard $e_i^*$ as elements of $\op^!(2)$.
\end{thm}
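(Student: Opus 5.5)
The plan is to define the map on generators and check that it respects the defining relations of $Lie$. Since $Lie$ is generated by the single antisymmetric element $\nu\in Lie(2)$ subject only to the Jacobi identity (Definition \ref{def-Lie-operad}), a morphism out of $Lie$ is the same as an element $\mu\in(\op\underset{\rm H}{\ot}\opup)(2)$ such that $\mu^{(1\,2)}=-\mu$ and
\[
\mu\cc(\mu,\id)+(\mu\cc(\mu,\id))^{(1\,2\,3)}+(\mu\cc(\mu,\id))^{(1\,3\,2)}=0 .
\]
With $\mu=\sum_e e\ot e^*$, this element is canonical: it corresponds to $\id_{\op(2)}\in\op(2)\ot\op(2)^*\cong\End(\op(2))$, so it does not depend on the choice of basis.

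\emph{Antisymmetry.} The $\s_2$-action on $\op(2)^*$ is the contragredient one, so $\id$ is $\s_2$-invariant under the diagonal action on $\op(2)\ot\op(2)^*$. Twisting the second factor by $\mathrm{sgn}_2$ to pass to $\op^!(2)=\op(2)^\vee$ multiplies this by $-1$. Hence $\mu^{(1\,2)}=-\mu$.

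\emph{Jacobi.} Write $\op=\mathcal{F}(E)/(R)$ with $E=\op(2)$ and $R\subset\mathcal{F}(E)(3)$. By the definition of the Koszul dual, $\op^!=\mathcal{F}(E^\vee)/(R^\perp)$, where $R^\perp$ is the annihilator of $R$ under the natural pairing $\mathcal{F}(E)(3)\ot\mathcal{F}(E^\vee)(3)\to\K$. This pairing incorporates the sign twist and identifies $\mathcal{F}(E^\vee)(3)\cong\mathcal{F}(E)(3)^*\ot\mathrm{sgn}_3$.

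Computed in the free operads, the Jacobiator
\[
J=\sum_{\text{cyclic}}\big(\mu\cc(\mu,\id)\big)^{\tau}
\]
lies in $\mathcal{F}(E)(3)\ot\mathcal{F}(E^\vee)(3)$. The space $\mathcal{F}(E)(3)$ is spanned by the three shapes $e\cc(e',\id)$ together with their cyclic translates. Using the pairing, $\mathcal{F}(E)(3)\ot\mathcal{F}(E^\vee)(3)$ is identified with $\End(\mathcal{F}(E)(3))$. Under this identification I expect $J$ to be exactly the canonical element $\id_{\mathcal{F}(E)(3)}=\sum_b b\ot b^*$, where $b$ runs over a basis of $\mathcal{F}(E)(3)$. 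The three cyclic terms produce the three tree shapes $(x_1x_2)x_3$, $(x_2x_3)x_1$, $(x_3x_1)x_2$, and the sign characters multiply to make all signs agree.

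Now choose the basis $\{b\}$ adapted to $R$: a basis of $R$ extended by vectors whose images form a basis of $\mathcal{F}(E)(3)/R$. Take the dual basis. Each term $b\ot b^*$ then has either $b\in R$, so it vanishes in $\op(3)$, or $b^*\in R^\perp$, so it vanishes in $\op^!(3)$. Hence the image of $J$ in $\op(3)\ot\op^!(3)$ is zero. Because composition in the Hadamard product is componentwise, this image is exactly the Jacobiator of $\mu$ there, which proves the Jacobi identity.

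\emph{Main obstacle.} The hardest step is the sign and equivariance bookkeeping in showing $J=\id_{\mathcal{F}(E)(3)}$. One has to check that the cyclic permutations $(1\,2\,3)$ and $(1\,3\,2)$ act on both tensor factors so that the sign representation in the definition of $E^\vee$ and the $\mathrm{sgn}_3$ twist in the pairing cancel exactly, with no relative sign between the three tree shapes. I would first verify this on the example $\op=Com$, $\op^!=Lie$, and then carry out the general computation with explicit tree monomials following the conventions of \cite{LV}.
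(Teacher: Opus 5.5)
The paper gives no proof of this statement; it quotes it from \cite{GK,LV} and uses it as a black box, so there is no internal argument to compare with. Your proposal is a correct proof, and it is the classical direct argument. A morphism out of $Lie$ is an antisymmetric arity-$2$ element satisfying Jacobi. Antisymmetry holds because $\sum_e e\ot e^*$ is the $\s_2$-invariant element $\id_{\op(2)}$, twisted by $\mathrm{sgn}_2$. For Jacobi, the free-level Jacobiator is the canonical element $\sum_b b\ot b^\#$ of the pairing $\mathcal{F}(E)(3)\ot\mathcal{F}(E^\vee)(3)\to\K$. Once $\{b\}$ is adapted to $R$, each summand has $b\in R$ or $b^\#\in R^\perp$, so this element dies in $\op(3)\ot\opup(3)$. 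Finally, $\mathcal{F}(E)\underset{\rm H}{\ot}\mathcal{F}(E^\vee)\to\op\underset{\rm H}{\ot}\opup$ is a morphism of operads, so it carries this vanishing to the Jacobiator of the image of $\nu$.

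The step you flag as the main obstacle needs no case check and no preliminary test on $Com$/$Lie$. The pairing that defines $R^\perp$ has two properties:
\begin{itemize}
\item it is block-diagonal with respect to the three tree shapes, with value $\langle\alpha,e\rangle\langle\beta,e'\rangle$ on matching shapes;
\item it is $\mathrm{sgn}_3$-twisted invariant.
\end{itemize}
The translates $(e\cc(e',\id))^\tau$, for $\tau\in\{\id,(1\,2\,3),(1\,3\,2)\}$, run through the three shapes. These $\tau$ are even, so the only sign that can enter is $\mathrm{sgn}(\tau)=1$, and
\[
\big\langle (e_i^*\cc(e_j^*,\id))^\tau,\,(e_k\cc(e_l,\id))^{\tau'}\big\rangle=\delta_{\tau\tau'}\,\delta_{ik}\,\delta_{jl}.
\]
Thus the elements $(e_i^*\cc(e_j^*,\id))^\tau$ form exactly the dual basis of the $(e_k\cc(e_l,\id))^{\tau'}$, and $J=\sum_b b\ot b^\#$ holds exactly. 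You also use implicitly that $\op(2)$ is finite-dimensional, for the dual basis and the nondegeneracy of the pairing; the statement already presupposes this.

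For comparison, the other standard route goes through Manin products. One uses the adjunction $\Hom(\mathcal{P}\bullet\mathcal{Q},\mathcal{R})\cong\Hom(\mathcal{P},\mathcal{R}\,\square\,\mathcal{Q}^{\textup{!}})$, with $Lie$ the unit of the black product, applied to $\id_{\op}$. This gives $Lie\to\op\,\square\,\opup$, and one then composes with the canonical map from the white product to $\op\underset{\rm H}{\ot}\opup$. That route is more conceptual, but its proof rests on the same orthogonality of $R$ and $R^\perp$. Your direct verification is the more elementary, self-contained version.
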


Denote by $preLie$ and $Perm$   the operads of pre-Lie algebras and perm algebras respectively. Recall that the Koszul dual of the $Perm$ operad is the $preLie$ operad \cite{CL}:
\begin{eqnarray}\label{preLie-Perm-dual}
&&{Perm}^! = preLie.
\end{eqnarray}
Moreover, the free perm algebra on a countable set $X=\{x_1, x_2, \dots\}$ has a basis consisting of all monomials of the form \cite{C}
\[
x_1\cdots\widehat{x_i}\cdots x_n\,x_i, \qquad n\ge 1,\ 1\le i\le n.
\]
Equivalently, the space $Perm(n)$ has a basis
\[
\{\, {\xi_n}^\sigma \mid \sigma\in\Sh^{-1}(n-1,1) \,\},
\]
where $\xi_n(x_1,\dots,x_n)=x_1x_2\cdots x_n$.
Note that $\xi_n$ is invariant under the right action of $\s_{n-1}\times \s_1\subset \s_n$ due to the left symmetry of the perm algebra.

By Theorem \ref{Lie-pp} and \eqref{preLie-Perm-dual}, we obtain a morphism of operads $Lie \to Perm\underset{\rm H}{\ot} preLie$ under which the image of $\nu$ is given by
\begin{align*}
\xi_2\ot {\xi_2}^*+{\xi_2}^{(1\,2)}\ot ({\xi_2}^{(1\,2)})^*,
\end{align*}
which is equal to $\xi_2\ot {\xi_2}^*-{\xi_2}^{(1\,2)}\ot ({\xi_2}^*)^{(1\,2)},$
where $\{\xi_2, {\xi_2}^{(1\,2)}\}$ is a basis of $Perm(2)$.

Let $(\g,\omega)$ be a pre-Lie algebra. Equivalently, there is a morphism of operads $\beta:preLie\to \End_\g$ sending ${\xi_2}^*\in preLie(2)$ to $\omega\in \End_\g(2)$. Then there is a morphism of operads  $\alpha: Lie\to Perm\underset{\rm H}{\ot}\End_\g$ as follows:
\begin{eqnarray}\label{Lie-to-pL-Endg}
	&&\alpha:Lie\overset{{\rm Th.}~ \ref{Lie-pp}}{\to} Perm\underset{\rm H}{\ot} Perm^!=Perm\underset{\rm H}{\ot} preLie\overset{\id\ot\beta}{\to}Perm\underset{\rm H}{\ot}\End_\g.
\end{eqnarray}
More precisely, $\alpha$ is given by
\begin{align}\label{alph-nu}
	\alpha(\nu)=\xi_2 \ot \omega-{\xi_2}^{(1\,2)}\ot \omega^{(1\,2)}.
\end{align}
Thus by Proposition \ref{CEO-cochain}, we obtain the following cochain complex.
\begin{pro}\label{CE-pL-Endg}
	Let $(\g,\omega)$ be a pre-Lie algebra. Then $(\ceo(Perm\underset{\rm H}{\ot}\End_\g),d_{\rm CE})$ is a cochain complex, where $d^n_{\rm CE}$ is the differential defined by \eqref{dce} for the above morphism $\alpha$.
\end{pro}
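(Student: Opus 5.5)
The statement follows from Proposition \ref{CEO-cochain} once we know that $\alpha$ in \eqref{Lie-to-pL-Endg} is a morphism of operads $Lie\to Perm\underset{\rm H}{\ot}\End_\g$. So the plan is to verify that $\alpha$ is a well-defined composite of operad morphisms and then invoke Proposition \ref{CEO-cochain} with $\huaO=Perm\underset{\rm H}{\ot}\End_\g$.

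First, Theorem \ref{Lie-pp} applied to $\op=Perm$, together with \eqref{preLie-Perm-dual}, gives a morphism $Lie\to Perm\underset{\rm H}{\ot} preLie$. Second, a pre-Lie structure $\omega$ on $\g$ is by Definition \ref{def-P-algebra} a morphism $\beta:preLie\to\End_\g$ with $\beta(\xi_2^*)=\omega$. Third, we check that for morphisms $\phi:\mathcal{Q}\to\mathcal{Q}'$ the map $\id\ot\phi:\mathcal{P}\underset{\rm H}{\ot}\mathcal{Q}\to\mathcal{P}\underset{\rm H}{\ot}\mathcal{Q}'$ is a morphism of operads.
\begin{itemize}
\item It is $\s_n$-equivariant because the action is diagonal and $\phi$ is equivariant.
\item It preserves the unit $\id\ot\id$.
\item It commutes with composition because composition in the Hadamard product is computed componentwise.
\end{itemize}
Composing these morphisms gives $\alpha$. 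Using $\beta$ equivariant, $\beta((\xi_2^*)^{(1\,2)})=\omega^{(1\,2)}$, so $\alpha(\nu)$ is as in \eqref{alph-nu}.

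Finally, Proposition \ref{CEO-cochain} states that for any operad $\huaO$ and morphism $Lie\to\huaO$, the operators \eqref{dce} satisfy $d_{\ce}^{n+1}\circ d_{\ce}^n=0$ on the antisymmetric parts $\ce^n(\huaO)$. Taking $\huaO=Perm\underset{\rm H}{\ot}\End_\g$ gives the claim.

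The only point needing care is the sign convention $\op(2)^*\ot\mathrm{sgn}_2=\op^!(2)$. It is what turns the formal image $\xi_2\ot\xi_2^*+\xi_2^{(1\,2)}\ot(\xi_2^{(1\,2)})^*$ into $\xi_2\ot\xi_2^*-\xi_2^{(1\,2)}\ot(\xi_2^*)^{(1\,2)}$. This affects only the explicit formula \eqref{alph-nu}, not the fact that $\alpha$ is a morphism. Hence no real obstacle arises: the content is entirely in the cited results.
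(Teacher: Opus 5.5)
Your proposal is correct and follows the paper's own route: the paper obtains this proposition directly from Proposition \ref{CEO-cochain}, applied to $\alpha$ as the composite of the Ginzburg--Kapranov morphism of Theorem \ref{Lie-pp} (with $Perm^!=preLie$) and $\id\ot\beta$. Your explicit check that $\id\ot\beta$ is a morphism of Hadamard-product operads fills in a step the paper leaves implicit.
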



Before we establish an isomorphism of cochain complexes between
$(\ceo(Perm\underset{\rm H}{\ot}\End_\g),d_{\rm CE})$ and
$(\frkC_{\pl}^\bullet(\g),d_\pl)$, we need the following two lemmas, where Lemma \ref{decomp-Sh} is a natural generalization of \cite[Lemma 1.3.3]{LV}.

\begin{lem}\label{decomp-Sh}
	Let $n=i_1+i_2+\cdots+i_k$ be a partition of $n$ and
	$\sigma\in\mathbb{S}_n$.
	\begin{itemize}
		\label{Sh-decom}
		\item[\textrm{(i)}]
		There exist unique permutations
		$\sigma'\in\Sh(i_1,i_2,\ldots,i_k)$ and
		$\sigma_j\in\mathbb{S}_{i_j}$, $1\leq j\leq k$, such that
		\begin{align*}
			\sigma
			=\sigma'(\sigma_1\times\sigma_2\times\cdots\times\sigma_k),
		\end{align*}
		where $\sigma_1\times\cdots\times\sigma_k$ is given by
		\eqref{def-aaaSn}.
		
		\label{Sh-inverse-decom}
		\item[\textrm{(ii)}]
		There exist unique permutations
		$\sigma'\in\Sh^{-1}(i_1,i_2,\ldots,i_k)$ and
		$\sigma_j\in\mathbb{S}_{i_j}$, $1\leq j\leq k$, such that
		\begin{align*}
			\sigma
			=(\sigma_1\times\sigma_2\times\cdots\times\sigma_k)\sigma'.
		\end{align*}
	\end{itemize}
\end{lem}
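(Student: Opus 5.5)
Part (i) is proved by an explicit construction of the shuffle, followed by a counting argument for uniqueness; part (ii) is then obtained from (i) by passing to inverses. Set $I_j=\{i_1+\cdots+i_{j-1}+1,\ldots,i_1+\cdots+i_j\}$, the $j$-th block of $[n]$. The permutations $\sigma_1\times\cdots\times\sigma_k$ described in \eqref{def-aaaSn} are exactly the permutations of $[n]$ that map each block $I_j$ onto itself. They form a subgroup of $\s_n$ isomorphic to $\s_{i_1}\times\cdots\times\s_{i_k}$, of order $i_1!\cdots i_k!$.

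\textbf{Existence in (i).} For $\sigma\in\s_n$ and each $j$, list the set $\sigma(I_j)$ in increasing order. Define $\sigma'$ to be the permutation sending the elements of $I_j$, in increasing order, to this ordered list. By construction $\sigma'$ is increasing on each block, so $\sigma'\in\Sh(i_1,\ldots,i_k)$, and $\sigma'(I_j)=\sigma(I_j)$. Hence $\tau:=\sigma'^{-1}\sigma$ maps each $I_j$ onto itself. Restricting $\tau$ to $I_j$ and shifting indices by $i_1+\cdots+i_{j-1}$ gives some $\sigma_j\in\s_{i_j}$, and then $\tau=\sigma_1\times\cdots\times\sigma_k$ by \eqref{def-aaaSn}.

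\textbf{Uniqueness in (i).} Suppose $\sigma'(\sigma_1\times\cdots\times\sigma_k)=\sigma''(\tau_1\times\cdots\times\tau_k)$ with $\sigma',\sigma''$ shuffles. Since both block permutations preserve every block, we get $\sigma'(I_j)=\sigma''(I_j)$ for all $j$. A shuffle is determined by the sets $\sigma'(I_j)$, since it must list each of them in increasing order. Therefore $\sigma'=\sigma''$, and cancelling gives $\sigma_j=\tau_j$ for all $j$. As a consistency check, $|\Sh(i_1,\ldots,i_k)|=n!/(i_1!\cdots i_k!)$, so the multiplication map from shuffles times the block subgroup to $\s_n$ is a bijection.

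\textbf{Part (ii).} Apply (i) to $\sigma^{-1}$: $\sigma^{-1}=\rho(\rho_1\times\cdots\times\rho_k)$ with $\rho$ a shuffle. Inverting gives
\[
\sigma=(\rho_1^{-1}\times\cdots\times\rho_k^{-1})\rho^{-1},
\]
using $(\rho_1\times\cdots\times\rho_k)^{-1}=\rho_1^{-1}\times\cdots\times\rho_k^{-1}$, which holds because $\times$ is a group homomorphism. By definition $\rho^{-1}\in\Sh^{-1}(i_1,\ldots,i_k)$. Uniqueness in (ii) follows from uniqueness in (i) by the same inversion, because inversion is a bijection on $\s_n$ and swaps the two kinds of decompositions.

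There is no real obstacle here. The only point needing care is the convention in \eqref{def-aaaSn}: one must check that $\sigma_1\times\cdots\times\sigma_k$ acts blockwise by shifted $\sigma_j$, so that the block-preserving subgroup is identified correctly.
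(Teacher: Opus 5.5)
Your proof is correct. Your convention also matches the paper's: composition of permutations is ordinary function composition, which is the convention that makes $f^\sigma$ a right action. The shuffle in (i) is forced by the sets $\sigma(I_j)$, which gives both existence and uniqueness, and (ii) follows by applying (i) to $\sigma^{-1}$.

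The paper gives no proof of its own. It only calls the lemma a natural generalization of Loday--Vallette's Lemma 1.3.3, and your argument is the standard one behind that statement, extended to $k$ blocks.
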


\begin{lem}\label{pf-in-preL}
For $i=1,2,\cdots,n$, we have the following result:
\begin{align}
\xi_2\circ(\id,{\xi_n}^{(n\,\cdots\,i)})&={\xi_{n+1}}^{(n+1\,\cdots\,i+1)}\label{pf-in-preL-1}\\
{\xi_2}^{(1\,2)}\circ(\id, {\xi_n}^{(n\,\cdots\,i)})&={\xi_{n+1}}^{(n+1\,\cdots\,1)}\label{pf-in-preL-2}\\
{\xi_{n}}^{(n\,\cdots\,i)}\circ({\xi_2},\id,\cdots,\id)&={\xi_{n+1}}^{(n+1\,\cdots\,i+1)}\label{pf-in-preL-3}\\
	{\xi_{n}}^{(n\,\cdots\,i)}\circ({\xi_2}^{(1\,2)},\id,\cdots,\id)&=
\bigg\{
\begin{array}{l}
	{\xi_{n+1}}^{(n+1\,\cdots\,1)},\quad i=1,\\[2mm]
	{\xi_{n+1}}^{(n+1\,\cdots\,i+1)},\quad i\neq1.
\end{array}\label{pf-in-preL-4}
\end{align}
\end{lem}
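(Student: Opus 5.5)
The plan is to compute everything by evaluating on the free perm algebra. $Perm(n)$ is the multilinear part of the free perm algebra on $x_1,\dots,x_n$, and its operad composition is substitution of monomials. So each identity in Lemma \ref{pf-in-preL} reduces to comparing two multilinear monomials.

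First I will identify ${\xi_n}^{(n\,\cdots\,i)}$ explicitly. For $\sigma=(n\,n-1\,\cdots\,i)$ we have $\sigma^{-1}(j)=j$ for $j<i$, $\sigma^{-1}(j)=j+1$ for $i\le j<n$, and $\sigma^{-1}(n)=i$. By \eqref{action-on-map},
\[
{\xi_n}^{(n\,\cdots\,i)}(x_1,\dots,x_n)=x_1\cdots x_{i-1}x_{i+1}\cdots x_n\,x_i ,
\]
which is the basis monomial whose last letter is $x_i$. For $i=n$ the cycle is trivial and we get $\xi_n$ itself.

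Second, I will record the normal-form fact that makes the comparison trivial. In a perm algebra, every bracketed product of $x_{j_1},\dots,x_{j_m}$ (in this left-to-right order, with any parenthesization) equals the monomial $x_{j_1}\cdots x_{j_{m-1}}x_{j_m}$. This monomial depends only on the rightmost letter $x_{j_m}$, because the remaining letters commute. I will prove this by induction on the bracketing using the two defining identities $(ab)c=a(bc)$ and $(ab)c=(ba)c$. Hence two multilinear perm monomials in the same variables coincide exactly when their rightmost letters coincide. I expect this step to be the only place needing care; everything afterwards is bookkeeping.

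Then I will verify the four identities one at a time by identifying the rightmost letter of the left-hand side.

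For \eqref{pf-in-preL-1}: $\xi_2\circ(\id,{\xi_n}^{(n\,\cdots\,i)})$ sends $(x_1,\dots,x_{n+1})$ to $x_1\cdot\big(\cdots x_{i+1}\big)$, since the inner monomial in $x_2,\dots,x_{n+1}$ ends in the shifted letter $x_{i+1}$. Its rightmost letter is $x_{i+1}$, giving ${\xi_{n+1}}^{(n+1\,\cdots\,i+1)}$.

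For \eqref{pf-in-preL-2}: since ${\xi_2}^{(1\,2)}(a,b)=ba$, the product becomes $(\cdots x_{i+1})\cdot x_1$. Its rightmost letter is $x_1$, giving ${\xi_{n+1}}^{(n+1\,\cdots\,1)}$.

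For \eqref{pf-in-preL-3}: we substitute $x_1x_2$ into the first slot and $x_{j+1}$ into slot $j\ge2$. If $i\ge2$, the last slot receives $x_{i+1}$. If $i=1$, the last slot receives $x_1x_2$, whose rightmost letter is $x_2=x_{i+1}$. In both cases the rightmost letter is $x_{i+1}$.

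For \eqref{pf-in-preL-4}: the argument is the same, except the first slot receives $x_2x_1$. When $i=1$ the rightmost letter is therefore $x_1$. When $i\neq1$ it is again $x_{i+1}$. This produces exactly the stated case split.
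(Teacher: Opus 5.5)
Your argument is correct and is the direct verification the paper leaves implicit: the paper states this lemma without proof, relying on the realization $\xi_n(x_1,\dots,x_n)=x_1\cdots x_n$ of $Perm(n)$ as multilinear perm monomials, the action $f^\sigma(x_1,\dots,x_n)=f(x_{\sigma^{-1}(1)},\dots,x_{\sigma^{-1}(n)})$, and the fact that such a monomial depends only on its last letter. Only the ``if'' half of your normal-form claim is used; the ``only if'' half comes from the cited basis result rather than from your induction, and the lemma does not need it.
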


By \eqref{def-ce}, we have
\[
\ce^n(Perm\underset{{\rm H}}{\ot}\End_\g):=\{F\in (Perm\underset{{\rm H}}{\ot}\End_\g)(n)\mid F^\sigma=(-1)^\sigma F,\ \forall \sigma\in\s_n\}.
\]
Since $\{{\xi_n}^\sigma \mid \sigma\in\Sh^{-1}(n-1,1)\}$ is a basis of $Perm(n)$, any element $F\in (Perm\underset{{\rm H}}{\ot}\End_\g)(n)$ can be written as
\begin{align*}
	F=\sum_{\sigma\in\Sh^{-1}(n-1,1)} {\xi_n}^\sigma \ot f_\sigma\in Perm(n)\ot \End_\g(n).
\end{align*}
For any element $F=\sum\limits_{\sigma\in\Sh^{-1}(n-1,1)}{\xi_n}^\sigma\otimes f_\sigma \in \ce^n(Perm\underset{{\rm H}}{\ot}\End_\g)$ ($n\geq 1$), we claim that $f_\id\in \frkC_{\pl}^n(\g)$. Indeed, take $\tau\in\s_{n-1}\times\s_1$. By $F^\tau=(-1)^\tau F$, we obtain
\[
\sum_{\sigma\in\Sh^{-1}(n-1,1)} {\xi_n}^{\sigma\tau}\ot f_\sigma{}^\tau = (-1)^\tau\sum_{\sigma\in\Sh^{-1}(n-1,1)} {\xi_n}^\sigma\ot f_\sigma .
\]
Since $\xi_{n}$ is invariant under the right action of $\s_{n-1}\times\s_1$, Lemma \ref{decomp-Sh} {\rm (ii)} implies that  the set $\{\xi^{\sigma\tau}_n\mid \sigma\in Sh^{-1}(n-1,1)\}$ is precisely the basis $\{{\xi_n}^{\sigma}\mid\sigma\in Sh^{-1}(n-1,1)\}$ of $Perm(n)$.   Therefore, comparing the coefficients of $\xi_n$ in the above equation gives $f_\id{}^\tau=(-1)^\tau f_\id$, which shows that $f_\id$ is alternating in the first $n-1$ variables and hence belongs to $\frkC_{\pl}^n(\g)$. Then the linear map $\Omega_n:\ce^n(Perm\underset{{\rm H}}{\ot}\End_\g)\to \frkC_{\pl}^n(\g)$ defined as follows is well-defined:
\begin{align}
\sum_{\sigma\in\Sh^{-1}(n-1,1)} {\xi_n}^\sigma\ot f_\sigma \longmapsto f_\id.
\end{align}

\begin{pro}\label{Omega}
With the above notations, $\Omega_n:\ce^n(Perm\underset{{\rm H}}{\ot}\End_\g)\to \frkC_{\pl}^n(\g)$ is an isomorphism of vector spaces.
\end{pro}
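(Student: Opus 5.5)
We will construct an explicit inverse $\Psi_n:\frkC_{\pl}^n(\g)\to \ce^n(Perm\underset{{\rm H}}{\ot}\End_\g)$ by antisymmetrization. Given $f\in\frkC_{\pl}^n(\g)$, set
\[
\Psi_n(f)=\sum_{\sigma\in\Sh^{-1}(n-1,1)}(-1)^\sigma\,{\xi_n}^\sigma\ot f^\sigma .
\]
The proof then has three steps: $\Psi_n(f)$ lies in $\ce^n$, $\Omega_n\Psi_n=\id$, and $\Omega_n$ is injective. Together these give that $\Omega_n$ is bijective.

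For the first step, we must show $\Psi_n(f)^\tau=(-1)^\tau\Psi_n(f)$ for every $\tau\in\s_n$. First observe that $\Psi_n(f)=\frac{1}{(n-1)!}\sum_{\rho\in\s_n}(-1)^\rho\,{\xi_n}^\rho\ot f^\rho$. To see this, write $\rho=(\tau_1\times\id)\sigma$ with $\sigma\in\Sh^{-1}(n-1,1)$, using Lemma \ref{decomp-Sh}(ii). Then $\xi_n^{\tau_1\times\id}=\xi_n$, since $\xi_n$ is $\s_{n-1}\times\s_1$-invariant, and $f^{\tau_1\times\id}=(-1)^{\tau_1}f$, since $f$ is alternating in the first $n-1$ slots. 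Hence each coset contributes $(n-1)!$ copies of the same term, and $\Psi_n(f)$ equals this full antisymmetrization. A full antisymmetrization is sign-equivariant under right multiplication by any $\tau$, because $\rho\mapsto\rho\tau$ is a bijection of $\s_n$ and $(\xi^\rho)^\tau=\xi^{\rho\tau}$. This gives the first step. The second step is immediate: the coefficient of ${\xi_n}^{\id}$ in $\Psi_n(f)$ is $f$.

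For injectivity, take $F=\sum_\sigma {\xi_n}^\sigma\ot f_\sigma\in\ce^n$ and show $f_\sigma=(-1)^\sigma f_\id^{\,\sigma}$ for every $\sigma\in\Sh^{-1}(n-1,1)$. Apply $F^{\sigma}=(-1)^{\sigma}F$. The left side is $\sum_{\rho}{\xi_n}^{\rho\sigma}\ot f_\rho^{\,\sigma}$, whose $\rho=\id$ term is ${\xi_n}^\sigma\ot f_\id^{\,\sigma}$. Any $\rho\neq\id$ term has ${\xi_n}^{\rho\sigma}={\xi_n}^{\sigma'}$ for a unique basis index $\sigma'$. That index is determined by which variable ends up in the last position, since ${\xi_n}^\pi$ depends only on $\pi^{-1}(n)$ modulo the left $\s_{n-1}\times\s_1$ action. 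It equals $\sigma$ only if $\rho\sigma\in(\s_{n-1}\times\s_1)\sigma$, i.e.\ $\rho\in\s_{n-1}\times\s_1$, and the only such element of $\Sh^{-1}(n-1,1)$ is $\id$. Comparing coefficients of ${\xi_n}^\sigma$ therefore gives $f_\id^{\,\sigma}=(-1)^\sigma f_\sigma$. Thus $F$ is determined by $f_\id$, so $\Omega_n$ is injective and $F=\Psi_n(f_\id)$.

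The main obstacle is this coset bookkeeping: correctly identifying ${\xi_n}^{\rho\sigma}$ with a basis element, using the convention $(f^\rho)^\sigma=f^{\rho\sigma}$ for right actions and the description of $\Sh^{-1}(n-1,1)$ as coset representatives of $(\s_{n-1}\times\s_1)\backslash\s_n$ from Lemma \ref{decomp-Sh}(ii). Once that is settled, the sign $(-1)^\sigma$ and the equality of coefficients follow mechanically.
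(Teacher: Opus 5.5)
Your proposal is correct and follows essentially the paper's route: you construct the same inverse $F_f=\sum_{\sigma\in\Sh^{-1}(n-1,1)}(-1)^\sigma\xi_n^\sigma\otimes f^\sigma$ and verify its sign-equivariance. The only difference is that you check equivariance by rewriting $F_f$ as $\frac{1}{(n-1)!}$ times the full antisymmetrization over $\s_n$ (legitimate since $\mathrm{char}\,\mathds{K}=0$), rather than by the paper's direct re-indexing $\sigma\tau=(\sigma_{n-1}\times\sigma_1)\sigma'$. Your explicit coefficient comparison showing $f_\sigma=(-1)^\sigma f_{\id}^{\,\sigma}$ also fills in the step $F_{\Omega_n(F)}=F$, which the paper dismisses as obvious.
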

\begin{proof}
For any $f\in\frkC_{\pl}^n(\g)$, set
\[
F_f := \sum_{\sigma\in\Sh^{-1}(n-1,1)} (-1)^\sigma {\xi_n}^\sigma \ot f^\sigma.
\]
We first check that $F_f\in\ce^n(Perm\underset{\rm H}{\ot}\End_\g)$, i.e. $F_f{}^\tau = (-1)^\tau F_f$ for all $\tau\in\s_n$.
Fix a $\tau\in\s_n$, for any $\sigma\in\Sh^{-1}(n-1,1)$, it is obvious that $\sigma\tau\in\s_n$.
By Lemma~\ref{Sh-decom} (ii), there exist unique $\sigma_{n-1}\in\s_{n-1},\sigma_1\in\s_1$ and $\sigma'\in\Sh^{-1}(n-1,1)$ such that $\sigma\tau = (\sigma_{n-1}\times\sigma_1)\sigma'$.
Using $\xi_n^{\sigma_{n-1}\times\sigma_1}=\xi_n$ and  $f^{\sigma_{n-1}\times\sigma_1}=(-1)^{\sigma_{n-1}\times\sigma_1}f$, we have
\begin{align*}
F_f{}^\tau
&=\sum_{\sigma\in\Sh^{-1}(n-1,1)} (-1)^\sigma {\xi_n}^{(\sigma_{n-1}\times\sigma_1)\sigma'} \ot f^{(\sigma_{n-1}\times\sigma_1)\sigma'} =\sum_{\sigma\in\Sh^{-1}(n-1,1)} (-1)^\sigma (-1)^{\sigma_{n-1}\times\sigma_1} {\xi_n}^{\sigma'} \ot f^{\sigma'} .
\end{align*}
From $\sigma\tau = (\sigma_{n-1}\times\sigma_1)\sigma'$ we have $(-1)^\sigma (-1)^\tau = (-1)^{\sigma_{n-1}\times\sigma_1} (-1)^{\sigma'}$, so $(-1)^\sigma (-1)^{\sigma_{n-1}\times\sigma_1} = (-1)^\tau (-1)^{\sigma'}$.
The uniqueness of $\sigma'$ implies that the map $\sigma\mapsto\sigma'$ is a bijection on $\Sh^{-1}(n-1,1)$, then re-indexing the sum gives
\[
F_f{}^\tau =\sum_{\sigma'\in\Sh^{-1}(n-1,1)} (-1)^\tau (-1)^{\sigma'} {\xi_n}^{\sigma'} \ot f^{\sigma'} = (-1)^\tau F_f .
\]
which implies that the map $\frkC^n_{\pl}(\g)\to\ce^n(Perm\otimes_{\rm H}\End_\g)$ sending $f$ to $F_f$ is well-defined,
and it is obviously the inverse of $\Omega_{n}$ since both compositions yield the identity on the respective spaces. Consequently, $\ce^n(Perm\underset{{\rm H}}{\ot}\End_\g)$ and $\frkC_{\pl}^n(\g)$ are isomorphic as vector spaces.
\end{proof}

Now we are ready to give a new characterization of the cohomology of pre-Lie algebras.

\begin{thm}\label{pre-Lie-cochain}
$\Omega=\{\Omega_n\}_{n\geq 1}:(\ce^\bullet(Perm\underset{{\rm H}}{\ot}\End_\g),d_\ce)\to(\frkC_{\pl}^\bullet(\g),d_\pl)$ is an isomorphism of cochain complexes.
Equivalently, the following diagram
commutes:
\[
\xymatrix@C=4em{
	\frkC_{\pl}^n(\g)
	\ar[r]^-{\Omega_n^{-1}}
	\ar[d]_{d_\pl^n}
	&
	\ce^n(Perm\underset{{\rm H}}{\ot}\End_\g)
	\ar[d]^{d_\ce^n}
	\\
	\frkC_{\pl}^{n+1}(\g)
	&
	\ce^{n+1}(Perm\underset{{\rm H}}{\ot}\End_\g)
	\ar[l]_-{\Omega_{n+1}}
}.
\]
\end{thm}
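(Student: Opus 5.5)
Since each $\Omega_n$ is a linear isomorphism by Proposition \ref{Omega}, it suffices to show $\Omega_{n+1}\circ d^n_\ce\circ\Omega_n^{-1}=d^n_\pl$. Fix $f\in\frkC^n_\pl(\g)$ and put $F_f=\Omega_n^{-1}(f)=\sum_{\sigma\in\Sh^{-1}(n-1,1)}(-1)^\sigma{\xi_n}^\sigma\ot f^\sigma$. Then $d^n_\ce(F_f)$ lies in $\ce^{n+1}$, and $\Omega_{n+1}$ only records the coefficient of $\xi_{n+1}$ in the basis $\{{\xi_{n+1}}^{\sigma}\mid\sigma\in\Sh^{-1}(n,1)\}$. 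So the task is to expand $d^n_\ce(F_f)$ via \eqref{dce} and \eqref{alph-nu}, and collect the terms whose $Perm$-component is $\xi_{n+1}$.

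First I will rewrite the basis of $Perm(n)$ concretely. The elements of $\Sh^{-1}(n-1,1)$ are exactly the cycles $(n\,\cdots\,i)$, $i=1,\dots,n$, with sign $(-1)^{n-i}$, and ${\xi_n}^{(n\,\cdots\,i)}$ is the monomial with $x_i$ in the last position. Then
\[
F_f=\sum_{i=1}^n(-1)^{n-i}{\xi_n}^{(n\,\cdots\,i)}\ot f^{(n\,\cdots\,i)}.
\]
Plugging this into the two sums of \eqref{dce}, the $Perm$-parts are $\xi_2\circ(\id,{\xi_n}^{(n\cdots i)})$, ${\xi_2}^{(1\,2)}\circ(\id,{\xi_n}^{(n\cdots i)})$, ${\xi_n}^{(n\cdots i)}\circ(\xi_2,\id,\dots)$ and ${\xi_n}^{(n\cdots i)}\circ({\xi_2}^{(1\,2)},\id,\dots)$, followed by the action of a shuffle $\sigma$. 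Lemma \ref{pf-in-preL} evaluates all four before the shuffle acts. Each result is ${\xi_{n+1}}^{(n+1\,\cdots\,j)}$ for some $j$, and after acting by $\sigma\in\Sh^{-1}(1,n)$ or $\Sh^{-1}(2,n-1)$ it becomes ${\xi_{n+1}}^{\rho}$ for some $\rho$. This term contributes to the $\xi_{n+1}$-coefficient exactly when $\rho$ fixes $n+1$ modulo the stabilizer $\s_n\times\s_1$, i.e. when the variable placed last is $x_{n+1}$.

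The contributions then sort according to the four sums of $d_\pl$:
\begin{enumerate}[(a)]
\item From $\xi_2\ot\omega$ in the first sum, the terms with $i=n$ and $\sigma$ moving a variable $x_k$, $k\le n$, to the front give $x_k\diamond f(\dots\hat x_k\dots;x_{n+1})$.
\item From $-{\xi_2}^{(1\,2)}\ot\omega^{(1\,2)}$ in the first sum, the variable $x_{n+1}$ sits in the first slot of $\alpha(\nu)$. This gives $f(\dots;x_k)\diamond x_{n+1}$.
\item From the second sum with $\xi_2\ot\omega$, the terms in which $x_{n+1}$ is the second input of $\omega$ inside the last slot of $f$ give $f(\dots;x_k\diamond x_{n+1})$. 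This uses case $i=1$ of \eqref{pf-in-preL-4} together with the component ${\xi_2}^{(1\,2)}$, and \eqref{pf-in-preL-3} for the rest.
\item The remaining terms, where $\alpha(\nu)$ lands in an antisymmetric slot with $x_{n+1}$ staying last, combine the two pieces of $\alpha(\nu)$ into $f(x_k\diamond x_l-x_l\diamond x_k,\dots;x_{n+1})$.
\end{enumerate}

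The main obstacle is the sign bookkeeping. The signs to track are $(-1)^\sigma$ for the shuffles, $(-1)^{n-i}$ from $F_f$, and the antisymmetry of $f$ used to move $x_k\diamond x_l$ to the front. These have to reproduce exactly $(-1)^{i+1}$, $(-1)^{i+1}$, $(-1)^{i}$ and $(-1)^{i+j}$. I will handle this by writing each surviving shuffle as an explicit cycle, computing its sign directly, and reducing every $f^{\rho}$ to a standard argument order via the alternating property in the first $n-1$ slots. As a sanity check I will first verify $n=1$ and $n=2$ using the explicit formulas for $d^1_\ce$ and $d^2_\ce$ given after Proposition \ref{CEO-cochain}.
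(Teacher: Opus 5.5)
Your plan is correct and is essentially the paper's own proof. The paper likewise writes $F_f=\sum_i(-1)^{n-i}{\xi_n}^{(n\,\cdots\,i)}\ot f^{(n\,\cdots\,i)}$, splits $d^n_\ce(F_f)$ into the two pieces of the first sum and the second sum, evaluates the $Perm$-parts with Lemma \ref{pf-in-preL}, and keeps exactly the terms with $Perm$-component $\xi_{n+1}$. Your cases (a)--(d) are its cases $i=n,\ j\le n$; $j=n+1$; $i=1,\ t=n+1$; and $i=n,\ s<t\le n$, with signs $(-1)^{j+1}$, $(-1)^{1-i}$, $(-1)^{s}$, $(-1)^{s+t}$ that reproduce $d^n_\pl$.

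One small correction to (c): the surviving terms there come only from \eqref{pf-in-preL-3} at $i=1$. The $i=1$ case of \eqref{pf-in-preL-4} is needed only to show that the ${\xi_2}^{(1\,2)}\ot\omega^{(1\,2)}$-piece at $i=1$ gives ${\xi_{n+1}}^{(n+1\,\cdots\,1)\sigma_{s,t}}\neq\xi_{n+1}$, so it contributes nothing.
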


\begin{proof}
 For $n\geq 1$ and every $f\in\frkC_{\pl}^n(\g)=\Hom(\wedge^{n-1}\g\otimes\g,\g)$, we need to show that
 $$
 d_{\pl}^n
 =
 \Omega_{n+1}\circ d_{\rm CE}^n\circ{\Omega_n}^{-1}.
 $$
 By Proposition \ref{Omega}, we have
	\begin{align*}
		\Omega^{-1}_n(f):=F_f=\sum_{\sigma\in\Sh^{-1}(n-1,1)} (-1)^\sigma {\xi_n}^\sigma \ot f^\sigma.
	\end{align*}
	Since $\Sh^{-1}(n-1,1)=\{(n\,n-1\,\cdots\,i)\mid i=1,\dots,n\}$ and $(-1)^{(n\,n-1\,\cdots\,i)}=(-1)^{n-i}$, we can write
	\begin{align*}
		F_f=\sum_{i=1}^n (-1)^{n-i} {\xi_n}^{(n\,n-1\,\cdots\,i)} \ot f^{(n\,n-1\,\cdots\,i)}.
	\end{align*}
With the abbreviation $\id\otimes\id$ for $\id_{Perm}\otimes\id_{\End_\g}$, we obtain the three parts of $d_{\rm CE}^n(F_f)$:
	\begin{eqnarray*}
d_{\rm CE}^n(F_f)&\overset{\eqref{dce}}=&\sum_{\sigma\in\Sh^{-1}(1,n)}(-1)^{\sigma}\bigl(\alpha(\nu)\circ(\id\ot\id,F_f)\bigr)^{\sigma}
		-\sum_{\sigma\in\Sh^{-1}(2,n-1)}(-1)^{\sigma}\bigl(F_f\circ(\alpha(\nu),\id\ot\id,\cdots,\id\ot\id)\bigr)^{\sigma}\\
		&\overset{\eqref{alph-nu}}=&\underbrace{\sum_{\sigma\in\Sh^{-1}(1,n)}(-1)^{\sigma}\bigl((\xi_n\otimes\omega)\circ(\id\otimes\id,F_f)\bigr)^{\sigma}}_{=:A}\\
		&&\underbrace{-\sum_{\sigma\in\Sh^{-1}(1,n)}(-1)^{\sigma}\bigl(({\xi_n}^{(1\,2)}\otimes\omega^{(1\,2)})\circ(\id\otimes\id,F_f)\bigr)^{\sigma}}_{=:B}\\
		&&\underbrace{-\sum_{\sigma\in\Sh^{-1}(2,n-1)}(-1)^{\sigma}\bigl(F_f\circ(\alpha(\nu),\id\otimes\id,\dots,\id\otimes\id)\bigr)^{\sigma}}_{=:C}.
	\end{eqnarray*}
By the definition of $\Omega_{n+1}$, we now examine $A,B,C$ separately and collect all terms whose $Perm$-component equals $\xi_{n+1}$.
	
\textbf{The term $A$.}
Substituting the expression for $F_f$ into $A$, we obtain
$$
A=\sum\limits_{\sigma\in\Sh^{-1}(1,n)} \sum\limits_{i=1}^{n}(-1)^{\sigma}(-1)^{n-i}\bigl((\xi_n\otimes\omega)\circ(\id\otimes\id, {\xi_n}^{(n\,\cdots\,i)}\ot f^{(n\,\cdots\,i)})\bigr)^{\sigma}.
$$ Note that $\Sh^{-1}(1,n)=\{(1\,2\,\cdots\,j)\mid j=1,\dots,n+1\}$ and $(-1)^{(1\,2\,\cdots\,j)}=(-1)^{j+1}$.
 Then  we have
	\begin{eqnarray*}
		A&=&\sum\limits_{j=1}^{n+1}\sum\limits_{i=1}^{n} (-1)^{j+1}(-1)^{n-i} \bigl((\xi_n\ot\omega)\circ(\id\ot\id,{\xi_n}^{(n\,\cdots\,i)}\ot f^{(n\,\cdots\,i)})\bigr)^{(1\,\cdots\,j)}\\
		&=&\sum\limits_{j=1}^{n+1}\sum\limits_{i=1}^{n} (-1)^{n+j+1-i}\bigl(\xi_2\circ(\id,{\xi_n}^{(n\,\cdots\,i)})\bigr)^{(1\,\cdots\,j)}\ot \bigl(\omega\circ(\id, f^{(n\,\cdots\,i)})\bigr)^{(1\,\cdots\,j)}\\
		&\overset{\eqref{pf-in-preL-1}}{=}&\sum\limits_{j=1}^{n+1}\sum\limits_{i=1}^{n} (-1)^{n+j+1-i}\bigl({\xi_{n+1}}^{(n+1\,\cdots\,i+1)}\bigr)^{(1\,\cdots\,j)}\ot \bigl(\omega\circ(\id, f^{(n\,\cdots\,i)})\bigr)^{(1\,\cdots\,j)}
	\end{eqnarray*}
Therefore, we need to find all $i,j$ such that ${\xi_{n+1}}^{(n+1\,\cdots\,i+1)(1\,\cdots\,j)}=\xi_{n+1}$. Since $\xi_{n+1}$ is invariant under the right action of $\s_n\times\s_1$, Lemma \ref{decomp-Sh} {\rm (ii)} implies that this condition is equivalent to
$
(n+1\,\cdots\,i+1)(1\,\cdots\,j)\in\s_n\times\s_1.
$
In other words, $n+1$ must be fixed by the permutation
$
(n+1\,\cdots\,i+1)(1\,\cdots\,j).
$
Thus, ${\xi_{n+1}}^{(n+1\,\cdots\,i+1)(1\,\cdots\,j)}=\xi_{n+1}$ holds precisely when $i=n,\; j=1,\ldots,n.$
 Therefore, collecting the terms in $A$ with $Perm$-component $\xi_{n+1}$, we obtain
\begin{align}\label{eq-preL-cochain-1}
	\sum_{j=1}^{n}
	(-1)^{j+1}
	\xi_{n+1}\otimes
	\bigl(\omega\circ(\id,f)\bigr)^{(1\,\cdots\,j)}.
\end{align}

\textbf{The term $B$.}
Substituting the expression for $F_f$ into $B$, we obtain
$$
B=-\sum\limits_{\sigma\in\Sh^{-1}(1,n)}\sum\limits_{i=1}^{n}(-1)^{\sigma}(-1)^{n-i}\bigl(({\xi_n}^{(1\,2)}\otimes\omega^{(1\,2)})\circ(\id\otimes\id,{\xi_n}^{(n\,\cdots\,i)}\ot f^{(n\,\cdots\,i)})\bigr)^{\sigma}.
$$
Therefore by Lemma \ref{pf-in-preL} and $\Sh^{-1}(1,n)=\{(1\,2\,\cdots\,j)\mid j=1,\dots,n+1\}$, we have
	\begin{eqnarray*}
		B&=&-\sum\limits_{j=1}^{n+1}\sum\limits_{i=1}^{n} (-1)^{j+1}(-1)^{n-i} \bigl(({\xi_n}^{(1\,2)}\ot \omega^{(1\,2)})\circ(\id\ot\id,{\xi_n}^{(n\,\cdots\,i)}\ot f^{(n\,\cdots\,i)})\bigr)^{(1\,\cdots\,j)}\\
		&=&\sum\limits_{j=1}^{n+1}\sum\limits_{i=1}^{n} (-1)^{n+j-i} \bigl({\xi_2}^{(1\,2)}\circ(\id, {\xi_n}^{(n\,\cdots\,i)})\bigr)^{(1\,\cdots\,j)}\ot \bigl(\omega^{(1\,2)}\circ (\id,f^{(n\,\cdots\,i)})\bigr)^{(1\,\cdots\,j)}\\
		&\overset{\eqref{pf-in-preL-2}}{=}&\sum\limits_{j=1}^{n+1}\sum\limits_{i=1}^{n} (-1)^{n+j-i} {\xi_{n+1}}^{(n+1\,\cdots\,1)(1\,\cdots\,j)}\ot \bigl(\omega^{(1\,2)}\circ (\id,f^{(n\,\cdots\,i)})\bigr)^{(1\,\cdots\,j)}.
	\end{eqnarray*}
Obviously,
${\xi_{n+1}}^{(n+1\,\cdots\,1)(1\,\cdots\,j)}=\xi_{n+1}$ holds precisely when $j=n+1$. Hence, collecting the terms in $B$ with $Perm$-component $\xi_{n+1}$, we obtain
	\begin{eqnarray}\label{eq-preL-cochain-2}
		\sum\limits_{i=1}^n (-1)^{1-i} \xi_{n+1}\ot \bigl(\omega^{(1\,2)}\circ (\id,f^{(n\,\cdots\,i)})\bigr)^{(1\,\cdots\,n+1)}.
	\end{eqnarray}

\textbf{The term $C$.}
Substituting the expression for $F_f$ into $C$, we obtain 	
$$
C=-\sum\limits_{\sigma\in\Sh^{-1}(2,n-1)}\sum\limits_{i=1}^n(-1)^{\sigma}(-1)^{n-i}\bigl(({\xi_n}^{(n\,\cdots\,i)}\ot f^{(n\,\cdots\,i)})\circ(\alpha(\nu),\id\otimes\id,\dots,\id\otimes\id)\bigr)^{\sigma}.
$$
	Note that \begin{align*}
		\Sh^{-1}(2,n-1)&=\big\{\sigma_{s,t}\in\s_{n+1}\mid 1\leq s<t\leq n+1,\sigma(s)=1,\sigma(t)=2,\\
		&\;\;\quad\sigma_{s,t}(1)<\cdots<\sigma_{s,t}(s-1)<\sigma_{s,t}(s+1)<\cdots\sigma_{s,t}(t-1)<\sigma_{s,t}(t+1)<\cdots\sigma_{s,t}(n+1)\big\}.
	\end{align*}
	and $(-1)^{\sigma_{s,t}}=(-1)^{s+t-1}$. Then we have
	\begin{align*}
		C&=\sum\limits_{1\leq s<t\leq n+1}\sum\limits_{i=1}^n(-1)^{s+t+n-i}\bigl(({\xi_n}^{(n\,\cdots\,i)}\ot f^{(n\,\cdots\,i)})\circ(\xi_2 \ot \omega-{\xi_2}^{(1\,2)}\ot \omega^{(1\,2)},\id\otimes\id,\dots,\id\otimes\id)\bigr)^{\sigma_{s,t}}\\
		&=\sum\limits_{1\leq s<t\leq n+1}\sum\limits_{i=1}^n(-1)^{s+t+n-i}\bigl( {\xi_n}^{(n\,\cdots\,i)}\circ(\xi_2,\id,\cdots,\id)\bigr)^{\sigma_{s,t}}\ot \bigl(f^{(n\,\cdots\,i)}\circ(\omega,\id,\cdots,\id)\bigr)^{\sigma_{s,t}}\\
		&\quad\quad-\sum\limits_{1\leq s<t\leq n+1}\sum\limits_{i=1}^n(-1)^{s+t+n-i}\bigl( {\xi_n}^{(n\,\cdots\,i)}\circ({\xi_2}^{(1\,2)},\id,\cdots,\id)\bigr)^{\sigma_{s,t}}\ot \bigl(f^{(n\,\cdots\,i)}\circ(\omega^{(1\,2)},\id,\cdots,\id)\bigr)^{\sigma_{s,t}}.
	\end{align*}
	We now separate the terms with $i=1$ from those with $i=2,\ldots,n$. By \eqref{pf-in-preL-3} and \eqref{pf-in-preL-4}, the terms corresponding to $i=1$ are
\begin{align*}
	&\sum\limits_{1\leq s<t\leq n+1}(-1)^{s+t+n-1}\bigl( {\xi_n}^{(n\,\cdots\,1)}\circ(\xi_2,\id,\cdots,\id)\bigr)^{\sigma_{s,t}}\ot \bigl(f^{(n\,\cdots\,1)}\circ(\omega,\id,\cdots,\id)\bigr)^{\sigma_{s,t}}\\
	&\quad-\sum\limits_{1\leq s<t\leq n+1}(-1)^{s+t+n-1}\bigl( {\xi_n}^{(n\,\cdots\,1)}\circ({\xi_2}^{(1\,2)},\id,\cdots,\id)\bigr)^{\sigma_{s,t}}\ot \bigl(f^{(n\,\cdots\,1)}\circ(\omega^{(1\,2)},\id,\cdots,\id)\bigr)^{\sigma_{s,t}}\\
	&=\sum_{1\leq s<t\leq n+1}
	(-1)^{s+t+n-1}
	{\xi_{n+1}}^{(n+1\,\cdots\,2)\sigma_{s,t}}
	\otimes
	\bigl(f^{(n\,\cdots\,1)}
	\circ(\omega,\id,\dots,\id)\bigr)^{\sigma_{s,t}}\\
	&\quad\quad-
	\sum_{1\leq s<t\leq n+1}
	(-1)^{s+t+n-1}
	{\xi_{n+1}}^{(n+1\,\cdots\,1)\sigma_{s,t}}
	\otimes
	\bigl(f^{(n\,\cdots\,1)}
	\circ(\omega^{(1,2)},\id,\dots,\id)\bigr)^{\sigma_{s,t}},
\end{align*}
while the terms corresponding to $i=2,\ldots,n$ are
\begin{align*}
&\sum\limits_{1\leq s<t\leq n+1}\sum\limits_{i=2}^n(-1)^{s+t+n-i}\bigl( {\xi_n}^{(n\,\cdots\,i)}\circ(\xi_2,\id,\cdots,\id)\bigr)^{\sigma_{s,t}}\ot \bigl(f^{(n\,\cdots\,i)}\circ(\omega,\id,\cdots,\id)\bigr)^{\sigma_{s,t}}\\
&\quad\quad-\sum\limits_{1\leq s<t\leq n+1}\sum\limits_{i=2}^n(-1)^{s+t+n-i}\bigl( {\xi_n}^{(n\,\cdots\,i)}\circ({\xi_2}^{(1\,2)},\id,\cdots,\id)\bigr)^{\sigma_{s,t}}\ot \bigl(f^{(n\,\cdots\,i)}\circ(\omega^{(1\,2)},\id,\cdots,\id)\bigr)^{\sigma_{s,t}}\\
&=\sum_{1\leq s<t\leq n+1}\sum_{i=2}^n(-1)^{s+t+n-i}{\xi_{n+1}}^{(n+1\,\cdots\,i+1)\sigma_{s,t}}
\otimes\bigl(f^{(n\,\cdots\,i)}\circ(\omega,\id,\dots,\id)\bigr)^{\sigma_{s,t}}\\
&\quad\quad-\sum_{1\leq s<t\leq n+1}\sum_{i=2}^n(-1)^{s+t+n-i}{\xi_{n+1}}^{(n+1\,\cdots\,i+1)\sigma_{s,t}}\otimes
	\bigl(f^{(n\,\cdots\,i)}
	\circ(\omega^{(1,2)},\id,\dots,\id)\bigr)^{\sigma_{s,t}}\\
&=\sum_{1\leq s<t\leq n+1}\sum_{i=2}^n
	(-1)^{s+t+n-i}
	{\xi_{n+1}}^{(n+1\,\cdots\,i+1)\sigma_{s,t}}
	\otimes
	\bigl(f^{(n\,\cdots\,i)}
	\circ(\omega-\omega^{(1,2)},\id,\dots,\id)\bigr)^{\sigma_{s,t}}.
\end{align*}
That is,
\begin{align*}
C= 	&\sum_{1\leq s<t\leq n+1}
(-1)^{s+t+n-1}
{\xi_{n+1}}^{(n+1\,\cdots\,2)\sigma_{s,t}}
\otimes
\bigl(f^{(n\,\cdots\,1)}
\circ(\omega,\id,\dots,\id)\bigr)^{\sigma_{s,t}}\\
&\quad-
\sum_{1\leq s<t\leq n+1}
(-1)^{s+t+n-1}
{\xi_{n+1}}^{(n+1\,\cdots\,1)\sigma_{s,t}}
\otimes
\bigl(f^{(n\,\cdots\,1)}
\circ(\omega^{(1,2)},\id,\dots,\id)\bigr)^{\sigma_{s,t}}\\
&+\sum_{1\leq s<t\leq n+1}\sum_{i=2}^n
(-1)^{s+t+n-i}
{\xi_{n+1}}^{(n+1\,\cdots\,i+1)\sigma_{s,t}}
\otimes
\bigl(f^{(n\,\cdots\,i)}
\circ(\omega-\omega^{(1,2)},\id,\dots,\id)\bigr)^{\sigma_{s,t}}.
\end{align*}

  	Clearly, ${\xi_{n+1}}^{(n+1\,\cdots\,2)\sigma_{s,t}}=\xi_{n+1}$ holds precisely when  $t=n+1,1\leqslant s\leqslant n$; ${\xi_{n+1}}^{(n+1\,\cdots\,i+1)\sigma_{s,t}}=\xi_{n+1}$ holds precisely when $i=n,1\leqslant s<t\leqslant n$; and there does not exist any $s,t$ such that ${\xi_{n+1}}^{(n+1\,\cdots\,1)\sigma_{s,t}}=\xi_{n+1}$. Therefore, collecting the terms in $C$ with $Perm$-component $\xi_{n+1}$, we obtain
	\begin{eqnarray}\label{eq-preL-cochain-3}
		&&\sum\limits_{s=1}^n (-1)^{s} \xi_{n+1}\ot \bigl(f^{(n\,\cdots\,1)}\circ(\omega,\id,\cdots,\id)\bigr)^{\sigma_{s,n+1}}\\
		&&\quad+\sum\limits_{1\leqslant s<t\leqslant n}(-1)^{s+t} \xi_{n+1}\ot \bigl(f\circ(\omega-\omega^{(1\,2)},\id,\cdots,\id)\bigr)^{\sigma_{s,t}}.\nonumber
	\end{eqnarray}
	
	In summary, combining \eqref{eq-preL-cochain-1}-\eqref{eq-preL-cochain-3} and Proposition \ref{Omega}, we have
	\begin{align*}
	&\quad\Omega_{n+1}\circ d^n_\ce\circ{\Omega_{n}}^{-1}(f)=\Omega_{n+1}\circ d^n_\ce(F_f)\\
		&=\sum\limits_{j=1}^{n} (-1)^{j+1}\bigl(\omega\circ(\id, f)\bigr)^{(1\,\cdots\,j)}+\sum\limits_{i=1}^n (-1)^{1-i}  \bigl(\omega^{(1\,2)}\circ (\id,f^{(n\,\cdots\,i)})\bigr)^{(1\,\cdots\,n+1)}\\
		&\quad+\sum\limits_{s=1}^n (-1)^{s} \bigl(f^{(n\,\cdots\,1)}\circ(\omega,\id,\cdots,\id)\bigr)^{\sigma_{s,n+1}}+\sum\limits_{1\leqslant s<t\leqslant n}(-1)^{s+t}  \bigl(f\circ(\omega-\omega^{(1\,2)},\id,\cdots,\id)\bigr)^{\sigma_{s,t}},
	\end{align*}
Comparing the resulting expression with the definition of \(d_{\pl}^n\), we conclude that
$$
d_{\pl}^n
=
\Omega_{n+1}\circ d_{\rm CE}^n\circ{\Omega_n}^{-1}.
$$
This completes the proof.
\end{proof}

\begin{rmk}
One may interpret  Theorem~\ref{pre-Lie-cochain} as follows:
the cochain complex construction $\frkC_{\pl}^\bullet $
is a splitting (bi-successor in terms of \cite{BBGN2013})
of ${\rm CE}^\bullet_{\rm Lie} $.
\end{rmk}

\section{Cohomology theory of Novikov algebras}\label{sec:nov}
In this section, we describe explicitly the cochain complex of a Novikov algebra via the Chevalley-Eilenberg cohomology associated to the operad morphism $Lie \to rNov\underset{\rm H}{\ot} Nov$.
In particular, we show that the cochain complex of the underlying pre-Lie algebra is the quotient of the cochain complex of a Novikov algebra. Consequently, there is a long exact sequence of cohomology groups.

\begin{defi}\label{def-l-r-Nov}
A vector space $\g$ with a binary operation $\diamond:\g\ot\g\to\g$ is called a {\bf left Novikov algebra} if for any $x,y,z\in \g$,
\begin{eqnarray}
(x\diamond y)\diamond z-x\diamond (y\diamond z)&=&(y\diamond x)\diamond z-y\diamond (x\diamond z),\label{l-Nov-1}\\
(x\diamond y)\diamond z&=&(x\diamond z)\diamond y.\label{l-Nov-2}
\end{eqnarray}
A vector space $\g$ with a binary operation $\circ:\g\ot\g\to\g$ is called a {\bf right Novikov algebra} if for any $x,y,z\in \g$,
\begin{eqnarray*}
(x\circ y)\circ z-x\circ (y\circ z)&=&(x\circ z)\circ y-x\circ (z\circ y),\\
x\circ (y\circ z)&=&y\circ (x\circ z).
\end{eqnarray*}
\end{defi}
In the sequel, by Novikov algebras we mean left Novikov algebras. A \textbf{homomorphism of Novikov algebras} from $(\g,\diamond)$ to $(\g',\diamond')$ is a linear map $f:\g\to \g'$ satisfying
\begin{align*}
	f(x\diamond y)=f(x)\diamond'f(y),\quad \forall x,y\in\g.
\end{align*}

\begin{rmk}
	Let $(\g, \diamond)$ be a Novikov algebra. Then in particular $(\g,\diamond)$ is a pre-Lie algebra, which is called the underlying pre-Lie algebra.
\end{rmk}

\begin{ex}\cite{GD}\label{ex-rN-N}
	Let $(A,\cdot)$ be a commutative associative algebra with a derivation $D$.
	Fix a scalar $\lambda \in \mathds K$ and define two binary operations $\diamond,\circ:A\otimes A\to A$ as follows:
	\begin{eqnarray}
		a\diamond b&=&a\cdot D(b) + \lambda a\cdot b,\\
		a\circ b&=&D(a)\cdot b + \lambda a\cdot b,\quad  \forall a,b\in A.\label{rNov-Dcom}
	\end{eqnarray}
Then $(A,\diamond)$ is a Novikov algebra and $(A,\circ)$ is a right Novikov algebra.
\end{ex}

\begin{ex}\label{ex-kt-p}
\begin{itemize}
\item[{\rm (i)}] Let $\mathds{K}[t]$ be the polynomial algebra with the ordinary derivation $\frac{d}{dt}$. Then by Example \ref{ex-rN-N} with $\lambda =0$, $(\mathds{K}[t],\diamond)$ is a Novikov algebra, where $\diamond$ is given by
\begin{align*}
t^i\diamond t^j:=t^i\frac{d}{dt} (t^j)=jt^{i+j-1},\quad\forall i,j\in\mathds{N}.
\end{align*}
\item[{\rm (ii)}]  Let $p$ be a prime and $\mathds{K}$ be a field of characteristic $p$. Then by Example \ref{ex-rN-N} with $\lambda =0$, $(\mathds{K}[t]/(t^p),\diamond)$ is a Novikov algebra, where $\diamond$ is given by
\begin{align*}
	t^i\diamond t^j:=t^i\frac{d}{dt} (t^j)=jt^{i+j-1},\quad\forall i,j\in\{0,1,\cdots,p-1\}.
\end{align*}
\end{itemize}
\end{ex}

Denote by $Nov$ and $rNov$ the operads of left and right Novikov algebras respectively. It was shown in \cite{D} that the Koszul dual operad of $rNov$ is $Nov$, that is, $rNov^! = Nov$.

The operad encoding the commutative associative differential algebras is denoted by $ComDer$. The operad $ComDer$ is generated by an operation $D$ of arity $1$ and a symmetric operation $\mu$ of arity $2$ satisfying the relations:
\begin{eqnarray}
	\mu\circ(\mu,\id)&=& \mu\circ(\id,\mu),\\
	D\circ \mu&=&\mu\circ(D,\id)+\mu\circ(\id,D).
\end{eqnarray}
Let $A$ be a $ComDer$-algebra. For any \(a, a_1,a_2\in A\) and \(i\geq 0\), we denote by
$$
a_1a_2:=\mu(a_1,a_2), \qquad a^{(i)}:=D^i(a).
$$
We omit unnecessary brackets due to the associativity of $\mu$.  By \cite{DIU,DL}, the operad $rNov$ admits an explicit realization as the suboperad of $ComDer$. More precisely, for each $n\geq 1$, $rNov(n)$ has a basis as follows:
\begin{align}\label{eq:basis-rNov}
	\big\{\mu_n\circ (D^{p_1}, D^{p_2},\cdots,D^{p_n})\mid (p_1,\cdots,p_n)\in P_n\big\},
\end{align}
where $\mu_n(a_1,a_2,\cdots,a_n)=a_1a_2\cdots a_n$ and the set $P_n$ is defined by
\begin{eqnarray}\label{Pn}
P_n&=&\{(p_1,\cdots,p_n)\in\dsN^n\mid\sum\limits_{i=1}^n p_i=n-1\}.
\end{eqnarray}
The right $\mathbb{S}_n$-module action on $rNov(n)$ is inherited from $ComDer(n)$ as follows:
\begin{align}\label{action-on-rNov}
	\big(\mu_n\circ (D^{p_1}, D^{p_2},\cdots,D^{p_n})\big)^\sigma &={\mu_n}^\sigma\circ (D^{p_{\sigma(1)}}, D^{p_{\sigma(2)}},\cdots,D^{p_{\sigma(n)}})\\
	&=\mu_n\circ(D^{p_{\sigma(1)}}, D^{p_{\sigma(2)}},\cdots,D^{p_{\sigma(n)}}),\qquad\quad\forall\sigma\in\mathbb{S}_n.\nonumber
\end{align}

Recall that $rNov^{\textup{!}}=Nov$.  By Theorem \ref{Lie-pp}, there exists a morphism of operads from $Lie$ to $rNov\underset{\rm H}{\ot} Nov$, sending $\nu\in Lie(2)$ to
$
e\ot e^*+e^{(1\,2)}\ot (e^{(1\,2)})^*,
$
where $e=\mu\circ(D,\id)\in rNov(2)$. Note that we have
$$
e\ot e^*+e^{(1\,2)}\ot (e^{(1\,2)})^* =e\ot e^*+e^{(1\,2)}\ot (-1)^{(1\,2)}(e^*)^{(1\,2)}=e\ot e^*-e^{(1\,2)}\ot (e^*)^{(1\,2)}.
$$

Let $(\g,\omega)$ be a Novikov algebra. Equivalently, there is a morphism $\beta:Nov\to \End_\g$ of operads sending $e^*$ to $\omega$. Then there is a morphism $\alpha$ of operads from $Lie$ to $rNov\underset{\rm H}{\ot}\End_\g$ as follows:
\begin{eqnarray}\label{Lie-to-rNov-Endg}
&&\alpha:Lie\overset{{\rm Th.}~ \ref{Lie-pp}}{\to} rNov\underset{\rm H}{\ot} rNov^!=rNov\underset{\rm H}{\ot} Nov\overset{\id\ot\beta}{\to}rNov\underset{\rm H}{\ot}\End_\g.
\end{eqnarray}
More precisely, $\alpha$ is given by
\begin{align*}
\alpha(\nu)=\mu\circ(D,\id) \ot \omega-\mu\circ(\id,D)\ot \omega^{(1\,2)}.
\end{align*}
 Thus by Proposition \ref{CEO-cochain}, we obtain the following cochain complex.
\begin{pro}\label{CE-rNov-Endg}
Let $(\g,\omega)$ be a Novikov algebra. Then $(\ceo(rNov\underset{\rm H}{\ot}\End_\g),d_{\rm CE})$ is a cochain complex, where $d_{\rm CE}$ is derived from \eqref{dce} for the above morphism $\alpha$.
\end{pro}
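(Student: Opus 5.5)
The statement to prove is Proposition~\ref{CE-rNov-Endg}: it is the specialization of Proposition~\ref{CEO-cochain} to $\huaO=rNov\underset{\rm H}{\ot}\End_\g$. So the only thing to check is that the map $\alpha$ in \eqref{Lie-to-rNov-Endg} really is a morphism of operads $Lie\to rNov\underset{\rm H}{\ot}\End_\g$. Once that is known, the definitions \eqref{def-ce} and \eqref{dce} produce the graded space $\ceo(rNov\underset{\rm H}{\ot}\End_\g)$ with an operator $d_{\rm CE}$, and Proposition~\ref{CEO-cochain} gives $d_{\rm CE}^{n+1}\circ d_{\rm CE}^n=0$ directly.

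I will write $\alpha$ as the composite of two morphisms, exactly as in \eqref{Lie-to-rNov-Endg}.

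\textbf{First morphism.} Theorem~\ref{Lie-pp} applies because $rNov$ is a binary quadratic operad, and its Koszul dual is $rNov^!=Nov$ by \cite{D}. This gives a morphism $Lie\to rNov\underset{\rm H}{\ot}Nov$ with
\[
\nu\mapsto e\ot e^*+e^{(1\,2)}\ot (e^{(1\,2)})^*, \qquad e=\mu\circ(D,\id).
\]
Here $\{e,e^{(1\,2)}\}$ is a basis of $rNov(2)$: by \eqref{eq:basis-rNov} with $n=2$, $P_2=\{(1,0),(0,1)\}$, so the basis is $\mu\circ(D,\id)$ and $\mu\circ(\id,D)=e^{(1\,2)}$ by \eqref{action-on-rNov}. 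Under the sign twist identifying $rNov(2)^*\ot{\rm sgn}_2$ with $Nov(2)$, we have $(e^{(1\,2)})^*=-(e^*)^{(1\,2)}$, and the image takes the form displayed before the proposition.

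\textbf{Second morphism.} The Novikov algebra $(\g,\omega)$ is the same thing as an operad morphism $\beta:Nov\to\End_\g$ with $e^*\mapsto\omega$; this is Definition~\ref{def-P-algebra}, with the generator $e^*$ of $Nov$ corresponding to the left Novikov product. Tensoring with the identity, $\id\ot\beta:rNov\underset{\rm H}{\ot}Nov\to rNov\underset{\rm H}{\ot}\End_\g$ is a morphism of operads. This holds because the Hadamard product is functorial: composition and the $\s_n$-actions are defined componentwise, so a tensor product of operad morphisms commutes with both.

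\textbf{Composite.} A composite of operad morphisms is an operad morphism. Its value on $\nu$ is
\[
\alpha(\nu)=\mu\circ(D,\id)\ot\omega-\mu\circ(\id,D)\ot\omega^{(1\,2)},
\]
using that $\beta$ is $\s_2$-equivariant, so $\beta\bigl((e^*)^{(1\,2)}\bigr)=\omega^{(1\,2)}$. Proposition~\ref{CEO-cochain} then yields the cochain complex.

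\textbf{Main obstacle.} The only delicate point is the convention in Theorem~\ref{Lie-pp}, namely that the Koszul dual of $rNov$ is the operad of \emph{left} Novikov algebras with $e^*$ acting as the product $\omega$. If the convention came out as the opposite product, I would replace $\omega$ by $\omega^{(1\,2)}$, and the argument would otherwise be unchanged. Independently of these conventions, one can check directly that $\alpha(\nu)$ is antisymmetric and satisfies the Jacobi identity in $rNov(3)\ot\End_\g(3)$; I would expect this to reduce to the identities \eqref{l-Nov-1} and \eqref{l-Nov-2} for $\omega$.
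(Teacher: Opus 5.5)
Your proposal is correct and follows the paper's route exactly. The paper builds $\alpha$ as the composite $Lie\to rNov\underset{\rm H}{\ot}Nov\overset{\id\ot\beta}{\to}rNov\underset{\rm H}{\ot}\End_\g$ in \eqref{Lie-to-rNov-Endg} and then simply invokes Proposition~\ref{CEO-cochain}; you additionally make explicit the routine checks that this composite is a morphism of operads and that $\alpha(\nu)$ has the displayed form, which the paper leaves implicit.
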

For $n\geqslant 1$, each element of
$\ce^n(rNov\underset{\rm H}{\ot}\End_\g)$ is of the form
$$
\sum\limits_{(p_1,\cdots,p_n)\in P_n} \mu_n\circ(D^{p_1}, D^{p_2},\cdots,D^{p_n})\otimes f^{(p_1,\cdots,p_n)},
$$
where $\mu_n\circ(D^{p_1}, D^{p_2},\cdots,D^{p_n})\in   rNov(n)$, $f^{(p_1,\cdots,p_n)}\in\End_\g(n)$, and satisfies
\begin{eqnarray}\label{CEn-element}
&&\Bigg(\sum_{(p_1,\cdots,p_n)\in P_n} \mu_n\circ(D^{p_1},D^{p_2},\cdots,D^{p_n})\otimes f^{(p_1,\cdots,p_n)}\Bigg)^\sigma\\
&=&(-1)^\sigma\sum_{(p_1,\cdots,p_n)\in P_n} \mu_n\circ(D^{p_1},D^{p_2},\cdots,D^{p_n} )\otimes f^{(p_1,\cdots,p_n)},\nonumber\quad \forall \sigma\in\mathbb{S}_n.
\end{eqnarray}
It is obvious that $(p_1,\cdots,p_n)\in P_n$ if and only if $(p_{\sigma(1)},\cdots,p_{\sigma(n)})\in P_n$ for any $\sigma\in \mathbb{S}_n$. Therefore, for  $\sigma\in\s_n$, we have
\begin{eqnarray*}\label{action-on-Had-prod}
&&\Bigg(\sum_{(p_1,\cdots,p_n)\in P_n} \mu_n\circ(D^{p_1},D^{p_2},\cdots,D^{p_n})\otimes f^{(p_1,\cdots,p_n)}\Bigg)^\sigma\\
&=&\sum_{(p_1,\cdots,p_n)\in P_n} \big(\mu_n\circ(D^{p_1},D^{p_2},\cdots,D^{p_n})\big)^\sigma\otimes (f^{(p_1,\cdots,p_n)})^\sigma\nonumber\\
&\overset{\eqref{action-on-rNov}}=&\sum_{(p_1,\cdots,p_n)\in P_n} \mu_n\circ(D^{p_{\sigma(1)}},D^{p_{\sigma(2)}},\cdots,D^{p_{\sigma(n)}})\otimes (f^{(p_1,\cdots,p_n)})^\sigma,\nonumber
\end{eqnarray*}
and
\begin{eqnarray*}
&&(-1)^\sigma\sum_{(p_1,\cdots,p_n)\in P_n} \mu_n\circ(D^{p_1},D^{p_2},\cdots,D^{p_n} )\otimes f^{(p_1,\cdots,p_n)},\\
&=&(-1)^\sigma\sum_{(p_{\sigma(1)},\cdots,p_{\sigma(n)})\in P_n} \mu_n\circ(D^{p_{\sigma(1)}},D^{p_{\sigma(2)}},\cdots,D^{p_{\sigma(n)}} )\otimes f^{(p_{\sigma(1)},\cdots,p_{\sigma(n)})}\nonumber.
\end{eqnarray*}
Thus, comparing the coefficients of $\mu_n\circ(D^{p_{\sigma(1)}},D^{p_{\sigma(2)}},\cdots,D^{p_{\sigma(n)}})$ on both sides of \eqref{CEn-element}, we obtain
\begin{eqnarray}\label{def-of-scrCn-action}
(f^{(p_1,\cdots,p_n)})^\sigma=(-1)^\sigma f^{(p_{\sigma(1)},\cdots,p_{\sigma(n)})},\quad\forall\sigma\in\mathbb{S}_n,
\end{eqnarray}
which motivates us to define a set $\scrC^n(\g)$ for any $n\geq 1$ by
\begin{eqnarray*}
\scrC^n(\g)=\Bigl\{ \{f^{(p_1,\cdots,p_n)}\}_{(p_1,\cdots,p_n)\in P_n} \;\Big|\; f^{(p_1,\cdots,p_n)}\in\End_\g(n),\
(f^{(p_1,\cdots,p_n)})^\sigma=(-1)^\sigma f^{(p_{\sigma(1)},\cdots,p_{\sigma(n)})} \Bigr\}.
\end{eqnarray*}

For $n\geq 1$, define a linear map $\Psi_n: \ce^n(rNov\underset{\rm H}{\ot}\End_\g) \to \scrC^n(\g)$ by
\begin{eqnarray*}
\Psi_n\Big(\sum_{(p_1,\cdots,p_n)\in P_n} \mu_n\circ(D^{p_1},\cdots,D^{p_n})\otimes f^{(p_1,\cdots,p_n)}\Big)&=&\{f^{(p_1,\cdots,p_n)}\}_{(p_1,\cdots,p_n)\in P_n}.
\end{eqnarray*}

\begin{lem}\label{Psi}
For $n\geq 1$,   $\Psi_n: \ce^n(rNov\underset{\rm H}{\ot}\End_\g)\to \scrC^n(\g)$ is a linear isomorphism of vector spaces.
\end{lem}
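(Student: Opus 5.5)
The plan is to show that $\Psi_n$ is well defined, that it is injective, and that it is surjective, using only the fact that \eqref{eq:basis-rNov} is a basis of $rNov(n)$ and the formula \eqref{action-on-rNov} for the $\s_n$-action.

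\textbf{Well-definedness and linearity.} Every element of $(rNov\underset{\rm H}{\ot}\End_\g)(n)=rNov(n)\ot\End_\g(n)$ can be written uniquely as $\sum_{(p_1,\cdots,p_n)\in P_n}\mu_n\circ(D^{p_1},\cdots,D^{p_n})\ot f^{(p_1,\cdots,p_n)}$. This holds because the elements $\mu_n\circ(D^{p_1},\cdots,D^{p_n})$ form a basis of $rNov(n)$. Hence the family $\{f^{(p_1,\cdots,p_n)}\}$ is uniquely determined and depends linearly on the element. If the element lies in $\ce^n$, then the computation preceding the definition of $\scrC^n(\g)$ gives \eqref{def-of-scrCn-action}. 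That computation uses three facts: the diagonal action, the formula \eqref{action-on-rNov}, the invariance of $P_n$ under permutation of entries, and comparison of coefficients of the basis vector $\mu_n\circ(D^{p_{\sigma(1)}},\cdots,D^{p_{\sigma(n)}})$. So $\Psi_n$ indeed lands in $\scrC^n(\g)$.

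\textbf{Injectivity.} This is immediate. If all components $f^{(p_1,\cdots,p_n)}$ vanish, the element is $0$, again by uniqueness of the expansion in the basis.

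\textbf{Surjectivity.} This step contains the only real check. Given $\{f^{(p)}\}\in\scrC^n(\g)$, set $F=\sum_{p\in P_n}\mu_n\circ(D^{p_1},\cdots,D^{p_n})\ot f^{(p)}$. We must verify $F^\sigma=(-1)^\sigma F$ for every $\sigma\in\s_n$. By \eqref{action-on-rNov},
\[
F^\sigma=\sum_{p\in P_n}\mu_n\circ(D^{p_{\sigma(1)}},\cdots,D^{p_{\sigma(n)}})\ot (f^{(p)})^\sigma=\sum_{p\in P_n}\mu_n\circ(D^{p_{\sigma(1)}},\cdots,D^{p_{\sigma(n)}})\ot (-1)^\sigma f^{(p_{\sigma(1)},\cdots,p_{\sigma(n)})}.
\]
We then reindex by $q=(p_{\sigma(1)},\cdots,p_{\sigma(n)})$. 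This is a bijection of $P_n$, since the entry sum is preserved. After reindexing, the sum becomes $(-1)^\sigma\sum_{q\in P_n}\mu_n\circ(D^{q_1},\cdots,D^{q_n})\ot f^{(q)}=(-1)^\sigma F$. So $F\in\ce^n(rNov\underset{\rm H}{\ot}\End_\g)$ and $\Psi_n(F)=\{f^{(p)}\}$.

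\textbf{Main point of care.} The only step needing attention is that the reindexing $p\mapsto p\circ\sigma$ is a bijection of $P_n$. One must also check that it is compatible with the convention \eqref{action-on-rNov}, so that the coefficient attached to each basis vector matches. Once this is checked, $\Psi_n$ and $F\mapsto\{f^{(p)}\}$ are mutually inverse linear maps, which proves the lemma.
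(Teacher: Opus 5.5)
Your proof is correct and follows the paper's own argument. The paper likewise sends $\{f^{(p)}\}$ to $\sum_{p\in P_n}\mu_n\circ(D^{p_1},\cdots,D^{p_n})\otimes f^{(p)}$ and notes that this lies in $\ce^n(rNov\underset{\rm H}{\ot}\End_\g)$; your reindexing computation of $F^\sigma=(-1)^\sigma F$, via \eqref{action-on-rNov}, the defining relation of $\scrC^n(\g)$ and the $\s_n$-stability of $P_n$, is exactly the ``straightforward check'' the paper leaves implicit, and splitting it into injectivity and surjectivity rather than exhibiting a two-sided inverse is only cosmetic.
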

\begin{proof}
For any element
$\bigl\{ f^{(p_1,\cdots,p_n)} \bigr\}_{(p_1,\cdots,p_n)\in P_n}\in\scrC^n(\g),$
it is straightforward to check that
$$
F:=\sum_{(p_1,\cdots,p_n)\in P_n} \mu_n\circ(D^{p_1},D^{p_2},\cdots,D^{p_n})\otimes f^{(p_1,\cdots,p_n)}\in\ce^n(rNov\underset{\rm H}{\otimes}\End_\g),
$$
that is, for any $\sigma\in\mathbb{S}_n$,
$F^\sigma=(-1)^\sigma F.$
Thus one can define a linear map from $\scrC^n(\g)$ to $\ce^n(rNov\underset{\rm H}{\ot}\End_\g)$ by
\begin{eqnarray*}
\{f^{(p_1,\cdots,p_n)}\}_{(p_1,\cdots,p_n)\in P_n}&\mapsto&\sum_{(p_1,\cdots,p_n)\in P_n} \mu_n\circ(D^{p_1},\cdots,D^{p_n})\otimes f^{(p_1,\cdots,p_n)},
\end{eqnarray*}
which is obviously the inverse of the map $\Psi_n$ since both compositions yield the identity on the respective spaces. Consequently, $\ce^n(rNov\underset{\rm H}{\ot}\End_\g)$ and $\scrC^n(\g)$ are isomorphic.
\end{proof}

Using the above isomorphism $\Psi_n$, we obtain a cochain complex.
\begin{pro}\label{scrC-cochain}
Let $(\g,\omega)$ be a Novikov algebra. Then $(\scrC^\bullet(\g)=\oplus_{n=1}^{\infty}\scrC^n(\g),\delta)$ is a cochain complex, where the coboundary operator $\delta^n:=\delta|_{\scrC^n(\g)}$ is defined by $\delta^n=\Psi_{n+1}\circ d_{\ce}^n\circ\Psi_n^{-1}$, which can be described as follows:
$$
\xymatrix{
		\text{$\scrC^n(\g)$}
		\ar[rr]^-{ \Psi_{n}^{-1}}
		\ar@{-->}[d]_-{\delta^n}
		&&\text{$\ce^n(rNov\underset{\rm H}{\otimes}\End_\g)$}
		\ar[d]_-{d^n_{\ce}}\\
		\text{$\scrC^{n+1}(\g)$}
		&&\text{$\ce^{n+1}(rNov\underset{\rm H}{\otimes}\End_\g)$}
		\ar[ll]_-{\Psi_{n+1}}}
$$
\end{pro}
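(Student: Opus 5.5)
The first assertion is purely formal. By Proposition \ref{CE-rNov-Endg}, $(\ceo(rNov\underset{\rm H}{\ot}\End_\g),d_{\rm CE})$ is a cochain complex, and by Lemma \ref{Psi} each $\Psi_n$ is a linear isomorphism. Setting $\delta^n=\Psi_{n+1}\circ d_\ce^n\circ\Psi_n^{-1}$ transports the differential, so
\[
\delta^{n+1}\circ\delta^n=\Psi_{n+2}\circ d_\ce^{n+1}\circ d_\ce^{n}\circ\Psi_n^{-1}=0 .
\]
Hence $\Psi=\{\Psi_n\}$ is an isomorphism of cochain complexes. The substantive part of the proposition is the explicit description of $\delta^n$, which I would derive by the same bookkeeping as in the proof of Theorem \ref{pre-Lie-cochain}.

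Take $\{f^{(p)}\}_{p\in P_n}\in\scrC^n(\g)$ and set $F=\sum_{p\in P_n}\mu_n\circ(D^{p_1},\dots,D^{p_n})\ot f^{(p)}$. Expand $d^n_\ce(F)$ using \eqref{dce} and $\alpha(\nu)=\mu\circ(D,\id)\ot\omega-\mu\circ(\id,D)\ot\omega^{(1\,2)}$, which gives four families of terms.

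First I compute the $rNov$-compositions inside $ComDer$, using commutativity, associativity and the Leibniz rule:
\begin{align*}
\mu\circ(D,\id)\circ\big(\id,\mu_n\circ(D^{p_1},\dots,D^{p_n})\big)&=\mu_{n+1}\circ(D,D^{p_1},\dots,D^{p_n}),\\
\mu\circ(\id,D)\circ\big(\id,\mu_n\circ(D^{p_1},\dots,D^{p_n})\big)&=\sum_{k=1}^n\mu_{n+1}\circ(\id,D^{p_1},\dots,D^{p_k+1},\dots,D^{p_n}),\\
\mu_n\circ(D^{p_1},\dots,D^{p_n})\circ\big(\mu\circ(D,\id),\id,\dots,\id\big)&=\sum_{r=0}^{p_1}\binom{p_1}{r}\mu_{n+1}\circ(D^{r+1},D^{p_1-r},D^{p_2},\dots,D^{p_n}).
\end{align*}
The term with $\mu\circ(\id,D)$ in the last composition is analogous. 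All of these lie in the basis \eqref{eq:basis-rNov} of $rNov(n+1)$, since the degrees again sum to $n$.

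Next I apply the shuffles. For $\Sh^{-1}(1,n)$ I use the cycles $(1\,\cdots\,j)$, and for $\Sh^{-1}(2,n-1)$ the permutations $\sigma_{s,t}$, both with the signs already computed in Section \ref{sec:pL}. By \eqref{action-on-rNov}, a shuffle only permutes the exponent tuple. Finally, fix $q=(q_1,\dots,q_{n+1})\in P_{n+1}$ and read off the coefficient of $\mu_{n+1}\circ(D^{q_1},\dots,D^{q_{n+1}})$.

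For example, the first family contributes $(-1)^{j+1}\big(\omega\circ(\id,f^{(q_1,\dots,\widehat{q_j},\dots,q_{n+1})})\big)^{(1\cdots j)}$ whenever $q_j\ge1$, with the exponent at position $j$ lowered by one. The other families give similar expressions involving binomial coefficients $\binom{q_s+q_t-1}{q_s-1}$ and the maps $f^{(\dots,q_s+q_t-1,\dots)}\circ(\omega,\id,\dots)$ and $f\circ(\omega^{(1\,2)},\id,\dots)$, together with a family summing over lowering one of the $q_k$ with $q_s=0$.

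The main obstacle is this last step. Matching each basis element of $rNov(n+1)$ with the correct shuffle and derivative distribution requires care with the signs and binomial multiplicities, because one basis monomial arises from several pairs $(\sigma,r)$ and from several $k$. To keep this manageable I would use the antisymmetry condition \eqref{def-of-scrCn-action}: it suffices to compute the coefficient for $q$ with a convenient ordering, since $(\delta^n f)^{(q)}$ for other orderings is determined by equivariance. As a sanity check, I would verify low arities ($n=1,2$) by hand against the Novikov identities.
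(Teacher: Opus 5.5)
Your first paragraph is exactly the paper's proof. Since each $\Psi_n$ is an isomorphism (Lemma \ref{Psi}) and $d_\ce$ is a differential, $\delta^{n+1}\delta^n=\Psi_{n+2}\circ d_\ce^{n+1}d_\ce^n\circ\Psi_n^{-1}=0$, and that is all the statement requires. The explicit computation of $\delta^n$ you then sketch is not needed, because ``described as follows'' only refers to the defining diagram; if you do pursue it, note that the first family forces $q_j=1$, not merely $q_j\ge 1$, since the new variable carries exactly $D^1$.
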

\begin{proof}
By Proposition \ref{CEO-cochain}, $(\ceo(rNov\underset{\rm H}{\ot}\End_\g), d_{\rm CE})$ is a cochain complex. Then we have
\begin{align*}
&&\delta^{n+1}\delta^n&=(\Psi_{n+2}\circ d_{\ce}^{n+1}\circ\Psi_{n+1}^{-1})\circ(\Psi_{n+1}\circ d_{\ce}^n\circ\Psi_n^{-1})
=\Psi_{n+2}\circ(d^{n+1}_\ce d^n_\ce)\circ\Psi^{-1}_n
=0.
\end{align*}
Therefore, $\delta^2=0$, and $(\scrC^\bullet(\g)=\oplus_{n=1}^{\infty}\scrC^n(\g),\delta)$ is a cochain complex.
\end{proof}

Before we give an explicit description of the cochain complex of Novikov algebras by use of the cochain complex $(\scrC^\bullet(\g),\delta)$, we introduce two classes of sequences that will be used extensively in the sequel:
\begin{eqnarray}
P^{\ord}_n&=&\{(p_1,p_2,\cdots,p_n)\in P_n\mid p_1\geqslant p_2\geqslant\cdots \geqslant p_n\}\subseteq P_n,\label{P-n-ord}\\
K_n&=&\{(k_{n-1},k_{n-2},\cdots,k_0)\in\dsN^n\mid\sum_{i=0}^{n-1}k_i=n,\,\sum_{i=0}^{n-1}ik_i=n-1\}.\label{K-n}
\end{eqnarray}

For any sequence $(p_1,\cdots,p_n)\in P^{\ord}_n$, we define
\[
k_j = |\{i\mid p_i = j,\,1\leq i\leq n\}|,\quad\forall~0\leq j\leq n-1.
\]
Equivalently, $k_j$ is the multiplicity of the value $j$ in the sequence $(p_1,\cdots,p_n)$. Thus, we obtain
$$\sum_{j=0}^{n-1} k_j = n,\qquad\sum_{j=0}^{n-1} j k_j =\sum_{i=1}^n p_i = n-1,$$
which implies that $(k_{n-1},\cdots,k_0)\in K_n$. Define a map
\begin{equation}\label{phi-n}
	\phi_n:P^{\ord}_n\to K_n
\end{equation}
sending a sequence $(p_1,\cdots,p_n)\in P^{\ord}_n$ to the above sequence $(k_{n-1},\cdots,k_0)\in K_n$.

\begin{lem}\label{P-K}
With the above notations, for any $n\geq 1$,   $\phi_n:P^{\ord}_n \to K_n$ is a bijection.
\end{lem}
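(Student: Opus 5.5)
We prove that $\phi_n$ is a bijection by writing down an explicit inverse $\psi_n:K_n\to P^{\ord}_n$ and checking that the two compositions are identities. A non-increasing sequence of non-negative integers is determined by how often each value occurs, so both directions reduce to bookkeeping with multiplicities.

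\emph{Well-definedness of $\phi_n$.} This was observed before the statement, but one point needs a word. For $(p_1,\dots,p_n)\in P^{\ord}_n$ we have $\sum_i p_i=n-1$, so every $p_i\le n-1$. Hence each entry lies in $\{0,\dots,n-1\}$, and all the multiplicities are recorded by the indices $k_0,\dots,k_{n-1}$. Consequently $\sum_j k_j=n$ and $\sum_j jk_j=\sum_i p_i=n-1$, so $\phi_n(p)\in K_n$.

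\emph{The inverse.} Given $(k_{n-1},\dots,k_0)\in K_n$, define $\psi_n(k_{n-1},\dots,k_0)$ to be the sequence
\[
(\underbrace{n-1,\dots,n-1}_{k_{n-1}},\underbrace{n-2,\dots,n-2}_{k_{n-2}},\dots,\underbrace{0,\dots,0}_{k_0}).
\]
Its length is $\sum_j k_j=n$. Its entries sum to $\sum_j jk_j=n-1$, and it is non-increasing by construction. Therefore it lies in $P^{\ord}_n$.

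\emph{The compositions.} First, $\phi_n\circ\psi_n=\id$: in $\psi_n(k)$ the value $j$ occurs exactly $k_j$ times, so its multiplicity vector is $k$ again. Second, $\psi_n\circ\phi_n=\id$: a non-increasing sequence with entries in $\{0,\dots,n-1\}$ must list all its copies of $n-1$ first, then all its copies of $n-2$, and so on. It is therefore exactly the block sequence built from its multiplicities.

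There is no real obstacle. The only point needing care is that the entries of $p$ are bounded by $n-1$, so that the multiplicities $k_0,\dots,k_{n-1}$ account for every entry. This bound is what makes $\sum_j k_j=n$ hold rather than give something smaller.
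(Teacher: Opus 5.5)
Your proof is correct and follows the same approach as the paper: build the block sequence from the multiplicities $(k_{n-1},\dots,k_0)$ as an explicit inverse (the paper's $\varphi_n$) and check that it inverts $\phi_n$. Your explicit remark that $0\le p_i\le n-1$, which guarantees $\sum_j k_j=n$, fills in a point the paper leaves implicit.
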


\begin{proof}
Given a sequence $(k_{n-1},\cdots,k_0)\in K_n$, since $\sum\limits_{j=0}^{n-1} k_j = n$, we construct a sequence of length $n$ as follows:
\begin{align}\label{map-Kn-to-Pn-ord}
\begin{split}
\left \{
\begin{array}{l}
p_1=\cdots=p_{k_{n-1}}=n-1,\\
p_{k_{n-1}+1}=\cdots=p_{k_{n-1}+k_{n-2}}=n-2,\\
\cdots\\
p_{k_{n-1}+\cdots+k_1+1}=\cdots=p_n=0.
\end{array}
\right.
\end{split}
\end{align}
Then condition $\sum\limits_{j=0}^{n-1} j k_j = n-1$ implies that $\sum\limits_{i=1}^n p_i = n-1$, which shows that $(p_1,\cdots,p_n)\in P_n$. Obviously the  sequence  is non-increasing, so $(p_1,\cdots,p_n)\in P^\ord_n$. Define the map
$$\varphi_n:K_n\to P^{\ord}_n$$
by sending a sequence $(k_{n-1},\cdots,k_0)\in K_n$ to the above sequence $(p_1,\cdots,p_n)\in P^{\ord}_n$.

It is obvious that the map $\varphi_n$ is the inverse of the map $\phi_n$.  Thus, $P^\ord_n$ and $K_n$ are in one-to-one correspondence for any $n\geq 1$.
\end{proof}

We now turn to a more detailed analysis of $P_n$. First, we introduce some notations and lemmas required for this purpose. Define a right action of $\s_n$ on $P_n$ by $$(p_1,\cdots, p_n)^\sigma:=(p_{\sigma(1)},\cdots,p_{\sigma(n)}).$$ In fact, for any $\sigma,\tau\in\s_n$, denote by $q_1=p_{\sigma(1)},\cdots,q_n=p_{\sigma(n)}$, we have
\begin{align*}
	((p_1,\cdots,p_n)^\sigma))^\tau&=(p_{\sigma(1)},\cdots, p_{\sigma(n)})^\tau=(q_1,\cdots,q_n)^\tau=(q_{\theta(1)},\cdots, q_{\theta(n)})\\
	&=(p_{\sigma\tau(1)},\cdots, p_{\sigma\tau(n)})=(p_1,\cdots,p_n)^{\sigma\tau},
\end{align*}
which implies that it is indeed  a right action.

\begin{lem}\label{lem-PP} The set  $P_n$ can be characterized by $P^\ord_n$ as follows:
 	\begin{align*}
		P_n=\{(p_{\tau(1)},\cdots, p_{\tau(n)})\mid(p_1,\cdots,p_n)\in P_n^{\ord},~\tau\in \Sh^{-1}(k_{n-1},\cdots,k_0)=\Sh^{-1}\phi_n(p_1,\cdots,p_n)\}.
	\end{align*}
\end{lem}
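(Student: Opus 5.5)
We will prove the equality of sets by double inclusion, using the right action $(p_1,\cdots,p_n)^\sigma=(p_{\sigma(1)},\cdots,p_{\sigma(n)})$ of $\s_n$ on $P_n$ introduced just before the statement, together with Lemma \ref{decomp-Sh} (ii).

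For the inclusion ``$\supseteq$'', take $(p_1,\cdots,p_n)\in P^\ord_n\subseteq P_n$ and any $\tau\in\s_n$ (in particular any $\tau\in\Sh^{-1}\phi_n(p_1,\cdots,p_n)$). The sequence $(p_{\tau(1)},\cdots,p_{\tau(n)})$ has the same entries as $(p_1,\cdots,p_n)$, so its entries still sum to $n-1$. Hence it lies in $P_n$. This is the observation already used in the paper that $P_n$ is $\s_n$-stable.

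For the inclusion ``$\subseteq$'', let $(q_1,\cdots,q_n)\in P_n$. Sorting its entries in non-increasing order gives a sequence $(p_1,\cdots,p_n)\in P^\ord_n$. Hence there is some $\sigma\in\s_n$ with $q_i=p_{\sigma(i)}$ for all $i$, that is, $(q_1,\cdots,q_n)=(p_1,\cdots,p_n)^\sigma$. Let $(k_{n-1},\cdots,k_0)=\phi_n(p_1,\cdots,p_n)$. By the explicit form \eqref{map-Kn-to-Pn-ord} of $\varphi_n=\phi_n^{-1}$ (Lemma \ref{P-K}), the sequence $(p_1,\cdots,p_n)$ is constant on consecutive blocks of sizes $k_{n-1},k_{n-2},\cdots,k_0$. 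Blocks with $k_j=0$ are empty; if needed, we either allow zero-size blocks in Lemma \ref{decomp-Sh} or simply drop them, which does not change the shuffle set. It follows that $(p_1,\cdots,p_n)^{\rho}=(p_1,\cdots,p_n)$ for every $\rho\in\s_{k_{n-1}}\times\cdots\times\s_{k_0}$. Now apply Lemma \ref{decomp-Sh} (ii) to this partition to write $\sigma=(\sigma_{n-1}\times\cdots\times\sigma_0)\tau$ with $\tau\in\Sh^{-1}(k_{n-1},\cdots,k_0)$. Since the action is a right action,
\[
(q_1,\cdots,q_n)=\big((p_1,\cdots,p_n)^{\sigma_{n-1}\times\cdots\times\sigma_0}\big)^{\tau}=(p_1,\cdots,p_n)^{\tau}=(p_{\tau(1)},\cdots,p_{\tau(n)}),
\]
which is exactly the required form.

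The only delicate point is the bookkeeping in this last step. One must check that the invariance of $(p_1,\cdots,p_n)$ is under the \emph{right} factor $\s_{k_{n-1}}\times\cdots\times\s_{k_0}$ acting first, so that the order in Lemma \ref{decomp-Sh} (ii), with block permutations on the left and the inverse shuffle on the right, is the correct one. This follows directly from $((p)^\sigma)^\tau=(p)^{\sigma\tau}$, verified just before the statement. One could add, although the statement does not require it, that the representation is unique: $P^\ord_n$ gives one representative per orbit, and the uniqueness part of Lemma \ref{decomp-Sh} (ii) shows that distinct $\tau\in\Sh^{-1}(k_{n-1},\cdots,k_0)$ give distinct sequences.
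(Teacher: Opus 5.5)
Your proposal is correct and follows essentially the same argument as the paper. The reverse inclusion is immediate; for the forward inclusion you sort the entries, write the reordering permutation via Lemma \ref{decomp-Sh} (ii) as a block permutation followed by an element of $\Sh^{-1}(k_{n-1},\cdots,k_0)$, and use that the sorted sequence is fixed by $\s_{k_{n-1}}\times\cdots\times\s_{k_0}$ — your $\sigma$ is exactly the paper's $\tau^{-1}$, so the bookkeeping coincides.
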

\begin{proof}
It is obvious that the right-hand side is contained in the left-hand side. Thus, we only need to show that any element in $P_n$ is of the above form.
 For any $(p_1,\cdots,p_n)\in P_n$, we can reorder the sequence $(p_1,\cdots,p_n)$ such that $p_{i_1}\geq \cdots \geq p_{i_n}$. Then  $(p_{i_1},\cdots,p_{i_n})\in P_{n}^{\ord}$. Let $(k_{n-1},\cdots,k_0)= \phi_n(p_{i_1},\cdots,p_{i_n})$. Define $\tau=\begin{pmatrix}
		1 & 2 & \cdots & n \\
		i_1 & i_2 & \cdots & i_n
\end{pmatrix}$. Then by Lemma \ref{Sh-inverse-decom} \textrm{(ii)}, there are unique permutations $\tau'\in\Sh^{-1}(k_{n-1},\cdots,k_0)$ and $\tau_{n-1}\in \mathbb{S}_{k_{n-1}},\cdots,\tau_0\in \mathbb{S}_{k_0}$ such that $\tau^{-1}=(\tau_{n-1}\times\cdots\times\tau_0)\tau'$. Note that  $(p_{i_1},\cdots,p_{i_n})^{\tau_{n-1}\times\cdots\times\tau_0}=(p_{i_1},\cdots,p_{i_n})$ by \eqref{map-Kn-to-Pn-ord}. Thus, we have
\begin{align*}
(p_1,\cdots,p_n)&=(p_{i_1},\cdots,p_{i_n})^{\tau^{-1}}=(p_{i_1},\cdots,p_{i_n})^{(\tau_{n-1}\times\cdots\times\tau_0)\tau'}\\
&=((p_{i_1},\cdots,p_{i_n})^{(\tau_{n-1}\times\cdots\times\tau_0)})^{\tau'}=(p_{i_1},\cdots,p_{i_n})^{\tau'},
\end{align*}
which finishes the proof.
\end{proof}

Let $(\g,\diamond)$ be a Novikov algebra. For $n\geq 1$, define the space of $n$-cochains by
\begin{equation}\label{eq:Nov-n-cochain}
\frkC_{\nov}^n(\g)=\underset{(k_{n-1},\cdots,k_0)\in K_n}{\oplus}\Hom(\wedge^{k_{n-1}}\g\ot\cdots\ot\wedge^{k_0}\g,\g).
\end{equation}

Obviously we have the following characterization of an $n$-cochain.

\begin{lem}\label{f-act-direct-product}
A multilinear map $f\in \End_\g(n)$ is in $\Hom(\wedge^{k_{n-1}}\g\ot\cdots\ot\wedge^{k_0}\g,\g)$ if and only if for all $\sigma_{n-1}\in \mathbb{S}_{k_{n-1}},\cdots,\sigma_0\in \mathbb{S}_{k_0}$, the following identity holds:
	\begin{align*}
		f^{\sigma_{n-1}\times\cdots\times\sigma_0}=(-1)^{\sigma_{n-1}\times\cdots\times\sigma_0} f,
	\end{align*}
where $(-1)^{\sigma_{n-1}\times\cdots\times\sigma_0}=(-1)^{\sigma_{n-1}}\cdots(-1)^{\sigma_0}$.
\end{lem}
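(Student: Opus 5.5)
Both directions reduce to unwinding the definitions: $\Hom(\wedge^{k_{n-1}}\g\ot\cdots\ot\wedge^{k_0}\g,\g)$ is the subspace of $\End_\g(n)=\Hom(\g^{\ot n},\g)$ of maps that are alternating in each consecutive block of arguments of sizes $k_{n-1},\ldots,k_0$, and by \eqref{def-aaaSn} the permutations $\sigma_{n-1}\times\cdots\times\sigma_0$ are exactly the permutations preserving each block.

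\emph{Necessity.} Let $f$ be alternating in each block, and fix $\sigma=\sigma_{n-1}\times\cdots\times\sigma_0$. By \eqref{action-on-map},
\[
f^{\sigma}(x_1,\ldots,x_n)=f(x_{\sigma^{-1}(1)},\ldots,x_{\sigma^{-1}(n)}).
\]
Since $\sigma^{-1}=\sigma_{n-1}^{-1}\times\cdots\times\sigma_0^{-1}$, the right-hand side only permutes the arguments within each block. The block for index $j$ is permuted by $\sigma_j^{-1}$, and alternation in that block gives the factor $(-1)^{\sigma_j^{-1}}=(-1)^{\sigma_j}$. Multiplying over the blocks yields $f^\sigma=(-1)^{\sigma_{n-1}}\cdots(-1)^{\sigma_0}f$. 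Finally, the sign is multiplicative on the product subgroup: the inversion count of a block permutation is the sum of the inversion counts of its blocks, because pairs lying in different blocks are never inverted. Hence this product of signs equals $(-1)^{\sigma_{n-1}\times\cdots\times\sigma_0}$.

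\emph{Sufficiency.} Suppose the identity holds for every block permutation. Take $\sigma$ to be the transposition of two adjacent positions inside a single block, with the identity in every other factor. Its sign is $-1$, so $f^\sigma=-f$: swapping two adjacent arguments in a block changes the sign of $f$. Adjacent transpositions generate each $\s_{k_j}$, so $f$ is skew-symmetric in each block.

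In characteristic $0$ skew-symmetry is equivalent to alternation, since $f=-f$ on coinciding arguments gives $2f=0$ there. (For the $\wedge$-version one can also read $\wedge^{k}$ as the antisymmetric quotient or subspace.) Therefore $f$ factors through $\wedge^{k_{n-1}}\g\ot\cdots\ot\wedge^{k_0}\g$.

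There is no real obstacle here. The only points needing care are the bookkeeping of $\sigma$ versus $\sigma^{-1}$ in the right action and the multiplicativity of the sign on block permutations; both are routine.
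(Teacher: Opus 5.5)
Your proposal is correct and is the direct unwinding of definitions that the paper has in mind. The paper gives no proof, introducing the lemma with ``Obviously we have the following characterization''; your remarks on $\sigma$ versus $\sigma^{-1}$, on multiplicativity of the sign on block permutations, and on skew-symmetry implying alternation in characteristic $0$ are exactly the routine points that the word ``obviously'' leaves unstated.
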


For $n\geq 1$, given an element $\{f^{(p_1,\cdots,p_n)}\}_{(p_1,\cdots,p_n)\in P_n}\in\scrC^n(\g)$,
we claim that
$$
\{f^{(p_1,\cdots,p_n)}\}_{(p_1,\cdots,p_n)\in P_n^{\ord}}\in\frkC_{\nov}^n(\g).
$$
Indeed, given $(p_1,\cdots,p_n)\in P_n^{\ord}$, by the map $\phi_n:P^\ord_n\to K_n$ shown in Lemma \ref{P-K}, we obtain a sequence
$(k_{n-1},\cdots,k_0)=\phi_n(p_1,\cdots,p_n)\in K_n$,
and satisfies the relation \eqref{map-Kn-to-Pn-ord}.
Then for all $\sigma_{n-1}\in \mathbb{S}_{k_{n-1}},\cdots,\sigma_0\in \mathbb{S}_{k_0}$, we have
\begin{eqnarray*}
	(f^{(p_1,\cdots,p_n)})^{\sigma_{n-1}\times\cdots\times\sigma_0}
	&\overset{\eqref{def-of-scrCn-action}}=&(-1)^{\sigma_{n-1}\times\cdots\times\sigma_0}f^{(p_{\sigma_{n-1}(1)},\cdots,p_{\sigma_{n-1}(k_{n-1})},\cdots,p_{k_{n-1}+\cdots+k_1+\sigma_0({1})},\cdots,p_{k_{n-1}+\cdots+k_1+\sigma_0(k_0)})}\\
	&\overset{\eqref{map-Kn-to-Pn-ord}}=&(-1)^{\sigma_{n-1}\times\cdots\times\sigma_0}f^{(p_1,\cdots,p_n)},
\end{eqnarray*}
which implies that $f^{(p_1,\cdots,p_n)}\in\Hom(\wedge^{k_{n-1}}\g\ot\cdots\ot\wedge^{k_0}\g,\g)$ by Lemma \ref{f-act-direct-product}. Hence, we obtain that $\{f^{(p_1,\cdots,p_n)}\}_{(p_1,\cdots,p_n)\in P_n^{\ord}}\in\frkC_{\nov}^n(\g)$. Then define a linear map $\Phi_n:\scrC^n(\g)\to \frkC^n_{\nov}(\g)$ by
\begin{align}
	\{f^{(p_1,\cdots,p_n)}\}_{(p_1,\cdots,p_n)\in P_n}\longmapsto \{f^{(p_1,\cdots,p_n)}\}_{(p_1,\cdots,p_n)\in P_n^{\ord}}.
\end{align}

\begin{pro}\label{psi}
With the above notations, for $n\geq 1$, $\Phi_n:\scrC^n(\g)\to \frkC^n_{\nov}(\g)$ is a linear isomorphism of vector spaces.	
\end{pro}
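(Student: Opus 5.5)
The plan is to write down an explicit inverse $\Phi_n^{-1}:\frkC^n_{\nov}(\g)\to\scrC^n(\g)$, following the pattern of Proposition~\ref{Omega}. Linearity and well-definedness of $\Phi_n$ were checked just before the statement, so only bijectivity remains.

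Let $\{g^{(p_1,\cdots,p_n)}\}_{(p_1,\cdots,p_n)\in P^\ord_n}\in\frkC^n_{\nov}(\g)$. By Lemma~\ref{P-K}, each component $g^{(p_1,\cdots,p_n)}$ lies in $\Hom(\wedge^{k_{n-1}}\g\ot\cdots\ot\wedge^{k_0}\g,\g)$ with $(k_{n-1},\cdots,k_0)=\phi_n(p_1,\cdots,p_n)$. By Lemma~\ref{lem-PP}, every $q\in P_n$ has the form $q=p^{\tau}$ with $p\in P^\ord_n$ and $\tau\in\Sh^{-1}(\phi_n(p))$.

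First, $p$ and $\tau$ are unique. The ordered representative $p$ is the nonincreasing rearrangement of $q$, hence determined by $q$. The stabilizer of $p$ in $\s_n$ is exactly $\s_{k_{n-1}}\times\cdots\times\s_{k_0}$. Lemma~\ref{decomp-Sh}(ii) then says that every coset of this stabilizer contains exactly one element of $\Sh^{-1}(\phi_n(p))$, so $\tau$ is determined as well.

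Define
\[
f^{q}:=(-1)^{\tau}\,(g^{p})^{\tau},\qquad q=p^\tau .
\]
On $P^\ord_n$ we have $\tau=\id$, so this family restricts to $g$. It remains to check that $\{f^q\}$ lies in $\scrC^n(\g)$, i.e.\ that $(f^q)^\sigma=(-1)^\sigma f^{q^\sigma}$ for all $\sigma\in\s_n$. Here $q^\sigma=p^{\tau\sigma}$. By Lemma~\ref{decomp-Sh}(ii), write $\tau\sigma=(\sigma_{n-1}\times\cdots\times\sigma_0)\tau'$ with $\tau'\in\Sh^{-1}(\phi_n(p))$. Since $p$ is fixed by the block permutation, $q^\sigma=p^{\tau'}$, and hence $f^{q^\sigma}=(-1)^{\tau'}(g^p)^{\tau'}$.

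On the other side,
\[
(f^q)^\sigma=(-1)^\tau (g^p)^{\tau\sigma}=(-1)^\tau (-1)^{\sigma_{n-1}\times\cdots\times\sigma_0}(g^p)^{\tau'},
\]
using Lemma~\ref{f-act-direct-product}. The sign identity $(-1)^\tau(-1)^\sigma=(-1)^{\sigma_{n-1}\times\cdots\times\sigma_0}(-1)^{\tau'}$ then gives exactly $(-1)^\sigma f^{q^\sigma}$. This is the same bookkeeping as in the proof of Proposition~\ref{Omega}.

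Finally, $\Phi_n$ is injective. If $f\in\scrC^n(\g)$ vanishes on $P^\ord_n$, then for any $q=p^\tau$ the defining relation of $\scrC^n(\g)$ gives $(f^p)^\tau=(-1)^\tau f^{q}$, so $f^q=(-1)^\tau(f^p)^\tau=0$. The same relation shows that any element of $\scrC^n(\g)$ equals the extension of its restriction, so the map constructed above is a two-sided inverse of $\Phi_n$.

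The step that needs the most care is the uniqueness and compatibility: that the stabilizer of an ordered $p$ is precisely the Young subgroup $\s_{k_{n-1}}\times\cdots\times\s_{k_0}$ (ties occur only within blocks of equal values), and that the right action conventions on $P_n$ and on $\End_\g(n)$ match up in the composition $\tau\sigma$. Once these are checked, the rest is formal.
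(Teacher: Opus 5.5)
Your proof is correct and follows essentially the same route as the paper: extend a cochain from the ordered representatives $P^{\ord}_n$ to all of $P_n$ by $f^{p^\tau}=(-1)^\tau(f^p)^\tau$ with $\tau\in\Sh^{-1}(\phi_n(p))$, and verify the $\scrC^n(\g)$ condition using Lemma~\ref{decomp-Sh}(ii) and Lemma~\ref{lem-PP}. Your version is slightly more careful than the paper's. The paper defines $f^{p^\sigma}$ for every $\sigma\in\s_n$ without noting that the value does not depend on the choice of $\sigma$, and your decomposition $\tau\sigma=(\sigma_{n-1}\times\cdots\times\sigma_0)\tau'$ together with Lemma~\ref{f-act-direct-product} supplies exactly that check.
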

\begin{proof}
Given $(k_{n-1},\cdots,k_0)\in K_n$ and a map $f_{(k_{n-1},\cdots,k_0)}\in\Hom(\wedge^{k_{n-1}}\g\ot\cdots\ot\wedge^{k_0}\g,\g)\subseteq\frkC_{\nov}^n(\g)$, let $(p_1,\cdots,p_n)=\varphi_n(k_{n-1},\cdots,k_0) \in P_{n}^\ord$ as shown in Lemma \ref{P-K}, and define
\begin{eqnarray*}
f^{(p_1,\cdots,p_n)}&:=&f_{(k_{n-1},\cdots,k_0)},\\
f^{(p_{\sigma(1)},\cdots,p_{\sigma(n)})}&:=&(-1)^\sigma (f^{(p_1,\cdots,p_n)})^\sigma=(-1)^\sigma (f_{(k_{n-1},\cdots,k_0)})^\sigma,\quad \forall \sigma\in \mathbb{S}_n.
\end{eqnarray*}
  We claim that the following set belongs to $\scrC^n(\g)$:
$$X:=\{f^{(p_{\tau(1)},\cdots,p_{\tau(n)})}\mid\tau\in\Sh^{-1}(k_{n-1},\cdots,k_0)\}.$$
Indeed, for any $\sigma\in\s_n$, we have
\begin{align*}
	\big(f^{(p_{\tau(1)},\cdots,p_{\tau(n)})}\big)^\sigma=((-1)^\tau(f^{(p_1,\cdots,p_n)})^\tau)^\sigma =(-1)^\sigma (-1)^{\tau\sigma}(f^{(p_1,\cdots,p_n)})^{\tau\sigma}=(-1)^\sigma f^{(p_{\tau\sigma(1)}, \cdots,p_{\tau\sigma(n)})}.
\end{align*}
By Lemma \ref{lem-PP}, for the element $(p_{\tau\sigma(1)}, \cdots,p_{\tau\sigma(n)})\in P_n$, there exists $\theta\in\Sh^{-1}(k_{n-1},\cdots,k_0)$ such that $(p_{\tau\sigma(1)}, \cdots,p_{\tau\sigma(n)})=(p_{\theta(1)}, \cdots,p_{\theta(n)})$, which implies that $f^{(p_{\tau\sigma(1)}, \cdots,p_{\tau\sigma(n)})}\in X$. Thus, elements of the set $X$ satisfies \eqref{def-of-scrCn-action}, that is, the set $X$ is an element of $\scrC^n(\g)$. Therefore, one can define a linear map from $\frkC^n_{\nov}(\g)$ to $\scrC^n(\g)$ by
\begin{eqnarray*}
f_{(k_{n-1},\cdots,k_0)}&\mapsto&X,
\end{eqnarray*}
which is obviously the inverse of the linear map $\Phi_n$ since both compositions yield the identity on the respective spaces. Consequently, $\scrC^n(\g)$ and  $\frkC^n_{\nov}(\g)$ are isomorphic.
\end{proof}

With these preparations, we are now ready to introduce the cochain complex of Novikov algebras.

\begin{thm}\label{cochain-Nov}
Let $(\g,\diamond)$ be a Novikov algebra. Then $(\frkC_{\nov}^\bullet(\g)=\oplus_{n=1}^{\infty}\frkC_{\nov}^n(\g),d_{\nov})$ is a cochain complex, where the space of $n$-cochains $\frkC_{\nov}^n(\g)$ is given by \eqref{eq:Nov-n-cochain}, and the coboundary operator $d_\nov^n$ is given by
$$d^n_\nov=\Phi_{n+1}\circ \delta^n \circ\Phi_{n}^{-1}=(\Phi_{n+1}\Psi_{n+1})\circ d^n_\ce\circ (\Phi_{n}\Psi_{n})^{-1},$$
which can be described as follows:
	\[
	\xymatrix@C=4em{
		\frkC_{\nov}^n(\mathfrak{g})
		\ar[r]^-{\Phi_{n}^{-1}}
		\ar@{-->}[d]_{d^n_\nov}
		&
		\mathscr{C}^n(\mathfrak{g})
		\ar[r]^-{\Psi_{n}^{-1}}
		\ar[d]_{\delta^n}
		&
		\ce^n(rNov\mathop{\otimes}\limits_{\mathrm{H}}\mathrm{End}_{\mathfrak{g}})
		\ar[d]_{d_\ce^n}
		\\
		\frkC_{\nov}^{n+1}(\mathfrak{g})
		&
		\mathscr{C}^{n+1}(\mathfrak{g})
		\ar[l]_-{\Phi_{n+1}}
		&
		\ce^{n+1}(rNov\mathop{\otimes}\limits_{\rm H}\mathrm{End}_{\mathfrak{g}})
		\ar[l]_-{\Psi_{n+1}}
	}
	\]
\end{thm}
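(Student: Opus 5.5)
The statement is essentially formal once the earlier results are in place, so the plan is to transport the differential along the composite isomorphism and check that squares are preserved. By Proposition \ref{CEO-cochain}, applied to the morphism $\alpha\colon Lie\to rNov\underset{\rm H}{\ot}\End_\g$ of \eqref{Lie-to-rNov-Endg}, the pair $(\ceo(rNov\underset{\rm H}{\ot}\End_\g),d_\ce)$ is a cochain complex; this is Proposition \ref{CE-rNov-Endg}.

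Lemma \ref{Psi} gives linear isomorphisms $\Psi_n$, and Proposition \ref{scrC-cochain} already transports $d_\ce$ to $\delta^n=\Psi_{n+1}d^n_\ce\Psi_n^{-1}$ on $\scrC^\bullet(\g)$ with $\delta^2=0$. Proposition \ref{psi} gives linear isomorphisms $\Phi_n\colon\scrC^n(\g)\to\frkC^n_\nov(\g)$.

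We then define $d^n_\nov:=\Phi_{n+1}\delta^n\Phi_n^{-1}$. The second expression in the statement follows from substituting the definition of $\delta^n$ and using $(\Phi_n\Psi_n)^{-1}=\Psi_n^{-1}\Phi_n^{-1}$. Both squares in the diagram then commute by construction.

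The cochain property is the same telescoping computation as in Proposition \ref{scrC-cochain}:
\[
d^{n+1}_\nov d^n_\nov=\Phi_{n+2}\,\delta^{n+1}\,\Phi_{n+1}^{-1}\Phi_{n+1}\,\delta^n\,\Phi_n^{-1}=\Phi_{n+2}\,\delta^{n+1}\delta^n\,\Phi_n^{-1}=0.
\]
It follows that $(\frkC^\bullet_\nov(\g),d_\nov)$ is a cochain complex, and that $\Phi\Psi$ is an isomorphism of cochain complexes onto it from the Chevalley--Eilenberg complex.

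The only point needing care is well-definedness: $\Phi_n^{-1}$ must land in $\scrC^n(\g)$, and $\Phi_{n+1}$ must land in $\frkC^{n+1}_\nov(\g)$. Both facts were established in the construction preceding Proposition \ref{psi}, using Lemmas \ref{P-K}, \ref{lem-PP} and \ref{f-act-direct-product}, so there is no genuine obstacle.

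The substantive work, which the statement only implicitly promises, is making $d_\nov$ explicit. For this we would mimic the proof of Theorem \ref{pre-Lie-cochain}:
\begin{enumerate}
\item Write $\Psi_n^{-1}\Phi_n^{-1}(f)$ as a sum over $P_n$.
\item Split $d_\ce$ into its three parts: the two terms from $\alpha(\nu)\circ(\id,F)$ and the term from $F\circ(\alpha(\nu),\id,\ldots,\id)$.
\item Compute the compositions in $ComDer$ using the Leibniz rule for $D$.
\item Collect the coefficients of the basis elements $\mu_{n+1}\circ(D^{q_1},\ldots,D^{q_{n+1}})$ with $(q_1,\ldots,q_{n+1})\in P^\ord_{n+1}$.
\end{enumerate}
The main obstacle is the combinatorial bookkeeping: the Leibniz expansion of $D$ acting on a product produces many multi-indices, and each permuted sequence must be reordered into $P^\ord_{n+1}$ with the correct sign, tracked via Lemma \ref{lem-PP}.
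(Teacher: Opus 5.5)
Your proposal is correct and takes the same approach as the paper. It transports $\delta$ along the isomorphisms $\Phi_n$ of Proposition \ref{psi} and obtains $d_\nov^{n+1}d_\nov^n=\Phi_{n+2}\,\delta^{n+1}\delta^n\,\Phi_n^{-1}=0$ from Proposition \ref{scrC-cochain}. The explicit computation you sketch at the end is not needed for this theorem; the paper carries it out separately in Propositions \ref{pro:formular 2-d} and \ref{pro-pL-rcom}.
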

\begin{proof}
By Proposition \ref{scrC-cochain}, $(\scrC^\bullet(\g),\delta)$ is a cochain complex. Then we have
\begin{align*}
d_\nov^{n+1}d_\nov^n&=(\Phi_{n+2}\circ \delta^{n+1} \circ\Phi^{-1}_{n+1})\circ(\Phi_{n+1}\circ \delta^n \circ\Phi_{n}^{-1})\\
&=\Phi_{n+2}\circ(\delta^{n+1}\delta^n)\circ\Phi_{n}^{-1}\\
&=0.
\end{align*}
Therefore, $d_{\nov}^2=0$, and $(\frkC_{\nov}^\bullet(\g)=\oplus_{n=1}^{\infty}\frkC_{\nov}^n(\g),d_{\nov})$ is a cochain complex.
\end{proof}
\begin{defi}\label{def-cohomology-Nov}
Let $(\g,\diamond)$ be a Novikov algebra. The cohomology of the cochain complex $(\frkC_{\nov}^\bullet(\g),d_{\nov})$ is called the {\bf cohomology} of the Novikov algebra $(\g,\diamond)$. We denote the $n$-th cohomology group by $\huaH^n_\nov(\g;\g)$.
\end{defi}
Given $(k_{n-1},\cdots,k_0)\in K_n$ and a map $f_{(k_{n-1},\cdots,k_0)}\in\Hom(\wedge^{k_{n-1}}\g\ot\cdots\ot\wedge^{k_0}\g,\g)\subseteq\frkC_{\nov}^n(\g)$, we now give a detail discussion of $d_\nov(f_{(k_{n-1},\cdots,k_0)})$. By the definition of $d_\nov^n$, we have
\begin{align*}
d_\nov^n(f_{(k_{n-1},\cdots,k_0)})
&=\Big((\Phi_{n+1}\Psi_{n+1})\circ d^n_\ce\circ (\Phi_{n}\Psi_{n})^{-1}\Big)(f_{(k_{n-1},\cdots,k_0)})\\
&=(\Phi_{n+1}\Psi_{n+1})\Big(d_\ce^n\big(\sum_{\sigma\in\Sh^{-1}(k_{n-1},\cdots,k_0)} (-1)^\sigma (\mu_n\circ(D^{p_1},\cdots,D^{p_n})\otimes f_{(k_{n-1},\cdots,k_0)})^\sigma\big)\Big),
\end{align*}
where $(p_1,\cdots,p_n)={\phi_n}^{-1}(k_{n-1},\cdots,k_0)$.
By the definition of $d_\ce^n$ given by \eqref{dce}, we obtain that
\begin{align*}
d_\nov^n(f_{(k_{n-1},\cdots,k_0)})\in
&\quad\Hom(\g\otimes\wedge^{k_{n-1}-1}\g\ot\wedge^{k_{n-2}}\g\ot\cdots\ot\wedge^{k_1}\g\ot\wedge^{{k_0}+1}\g,\g)\\
&\oplus\Hom(\wedge^{k_{n-1}+1}\g\ot\wedge^{k_{n-2}-1}\g\ot\wedge^{k_{n-3}}\g\ot\cdots\ot\wedge^{k_1}\g\ot\wedge^{{k_0}+1}\g,\g)\\
&\oplus\cdots\\
&\oplus\Hom(\wedge^{k_{n-1}}\g\ot\cdots\ot\wedge^{{k_2}+1}\g\ot\wedge^{{k_1}-1}\g\ot\wedge^{{k_0}+1}\g,\g)\\
&\oplus\Hom(\wedge^{k_{n-1}}\g\ot\cdots\ot\wedge^{{k_2}}\g\ot\wedge^{{k_1}+1}\g\ot\wedge^{{k_0}}\g,\g),
\end{align*}
which motivates us to write the operator $d^{(k_{n-1},\cdots,k_0)}:=d_\nov^n|_{\Hom(\wedge^{k_{n-1}}\g\ot\cdots\ot\wedge^{k_0}\g,\g)}$ as follows:
\begin{eqnarray*}
d^{(k_{n-1},\cdots,k_0)}&=&\sum_{(k'_n,k'_{n-1},\cdots,k'_0)\in K'_{n+1}}d^{(k_{n-1},\cdots,k_0)}_{(k'_n,k'_{n-1},\cdots,k'_0)},
\end{eqnarray*}
where $d^{(k_{n-1},\cdots,k_0)}_{(k'_n,k'_{n-1},\cdots,k'_0)}:\Hom(\wedge^{k_{n-1}}\g\ot\cdots\ot\wedge^{k_0}\g,\g)\to\Hom(\wedge^{k'_n}\g\ot\wedge^{k'_{n-1}}\g\ot\cdots\ot\wedge^{k'_0}\g,\g)$ and
\begin{eqnarray*}
K'_{n+1}=
\left\{
\begin{array}{l}
(1,k_{n-1}-1,k_{n-2},\cdots,k_1,k_0+1),\\[2mm]
(0,k_{n-1}+1,k_{n-2}-1,k_{n-3},\cdots,k_1,k_0+1),\\[2mm]
\cdots,\\[2mm]
(0,k_{n-1},\cdots,k_3,k_2+1,k_1-1,k_0+1),\\[2mm]
(0,k_{n-1},\cdots,k_2,k_1+1,k_0)
\end{array}
\right\}.
\end{eqnarray*}
Therefore, the graphical description of $d^{(k_{n-1},\cdots,k_0)}$ is
\begin{equation*}
{\tiny
\xymatrix@C=35.7pt@R=39pt{
&&
\frkC_\nov^{(k_{n-1},\cdots,k_0)}
\ar[dll]|{d^{(k_{n-1},\cdots,k_0)}_{(1,k_{n-1}-1,\cdots,k_0+1)}\quad}
\ar[dl]|{\qquad d^{(k_{n-1},\cdots,k_0)}_{(0,k_{n-1}+1,k_{n-2}-1,\cdots,k_0+1)}}
\ar[dr]|{d^{(k_{n-1},\cdots,k_0)}_{(0,k_{n-1},\cdots,k_2+1,k_1-1,k_0+1)}\qquad}
\ar[drr]|{\quad d^{(k_{n-1},\cdots,k_0)}_{(0,k_{n-1},\cdots,k_2,k_1+1,k_0)}}
&&
\\
\frkC_\nov^{(1,k_{n-1}-1,\cdots,k_0+1)}
&\frkC_\nov^{(0,k_{n-1}+1,k_{n-2}-1,\cdots,k_0+1)}
&\cdots
&\frkC_\nov^{(0,k_{n-1},\cdots,k_2+1,k_1-1,k_0+1)}
&\frkC_\nov^{(0,k_{n-1},\cdots,k_2,k_1+1,k_0)}
}
}
\end{equation*}
where $\frkC_\nov^{(k_{n-1},\cdots,k_0)}$ is an abbreviation for $\Hom(\wedge^{k_{n-1}}\g\ot\cdots\ot\wedge^{k_0}\g,\g)$.
We now use the following diagram to illustrate the explicit results of the low-degree differentials.
\begin{equation}\label{diagram-Nov-cohomology}
{\tiny
\xymatrix@C=-3.7pt@R=20pt{
&&&&&
&\Hom(\g,\g)
\ar@[blue][d]^-{d^{(1)}_{(1,1)}}
&&&&&
\\
&&&&&
&\Hom(\g\otimes\g,\g)
\ar@[red][dll]_-{d^{(1,1)}_{(1,0,2)}}
\ar@[blue][drr]^-{d^{(1,1)}_{(0,2,1)}}
&&&&&
\\
&&&&\Hom(\g\otimes\wedge^2\g,\g)
\ar@[red][d]_-{d^{(1,0,2)}_{(1,0,0,3)}}
\ar[drr]
&
&
&
&\Hom(\wedge^2\g\otimes\g,\g)
\ar[dll]
\ar@[blue][d]^-{d^{(0,2,1)}_{(0,0,3,1)}}
&&&
\\
&&&
&\Hom(\g\otimes\wedge^3\g,\g)
\ar@[red][dll]_-{d^{(1,0,0,3)}_{(1,0,0,0,4)}}
\ar[d]
&
&\Hom(\g\otimes\g\otimes\wedge^2\g,\g)
\ar[dll]
\ar[d]
\ar[drr]
&
&\Hom(\wedge^3\g\otimes\g,\g)
\ar@[blue][drr]^-{d^{(0,0,3,1)}_{(0,0,0,4,1)}}
\ar[d]
&&&
\\
&
&\Hom(\g\otimes\wedge^4\g,\g)
\ar@[red][dll]_-{d^{(1,0,0,0,4)}_{(1,0,0,0,0,5)}}
\ar[d]
&
&\Hom(\g\otimes\g\otimes\wedge^3\g,\g)
\ar[dll]
\ar[d]
\ar[drr]
&
&\Hom(\wedge^2\g\otimes\wedge^3\g,\g)
\ar[dll]
\ar[drr]
&
&\Hom(\g\otimes\wedge^2\g\otimes\wedge^2\g,\g)
\ar[dll]
\ar[d]
\ar[drr]
&
&\Hom(\wedge^4\g\otimes\g,\g)
\ar@[blue][drr]^-{d^{(0,0,0,4,1)}_{(0,0,0,0,5,1)}}
\ar[d]
&
\\
\Hom(\g\otimes\wedge^5\g,\g)
&
&\Hom(\g\otimes\g\otimes\wedge^4\g,\g)
&
&\Hom(\g\otimes\g\otimes\wedge^4\g,\g)
&
&\Hom(\g\otimes\wedge^2\g\otimes\wedge^3\g,\g)
&
&\Hom(\wedge^2\g\otimes\g\otimes\wedge^3\g,\g)
&
&\Hom(\g\otimes\wedge^3\g\otimes\wedge^2\g,\g)
&
&\Hom(\wedge^5\g\otimes\g,\g)
}
}
\end{equation}

Elements in $\wedge^{k_{n-1}}\g\ot\cdots\ot\wedge^{k_0}\g$ will be written in the form of $x_1,\cdots,x_{k_{n-1}};\cdots;x_{k_{n-1}+\cdots+k_1+1},\cdots,x_{k_{n-1}+\cdots+k_1+k_0}$.

\begin{pro}\label{pro:formular 2-d}
	For any $f_{(1,1)}\in \Hom(\g\ot\g,\g)$, we have $d_{\nov}^2(f_{(1,1)})=   \{d_{(1,0,2)}^{(1,1)}+d_{(0,2,1)}^{(1,1)}\}(f_{(1,1)})$. For all $x_1,x_2,x_3\in \g$, $d_{(1,0,2)}^{(1,1)}(f_{(1,1)})$ and $d_{(0,2,1)}^{(1,1)}(f_{(1,1)})$ are given explicitly by
	\begin{eqnarray*}
		 \big(d_{(1,0,2)}^{(1,1)}\big)(f_{(1,1)})(x_1;x_2,x_3)
		&=&\ff(x_1,x_3)\diamond x_2-\ff(x_1,x_2)\diamond x_3-\ff(x_1\diamond x_2,x_3)+\ff(x_1\diamond x_3,x_2),
	\end{eqnarray*}
	and
	\begin{eqnarray*}
		 \big(d_{(0,2,1)}^{(1,1)}(f_{(1,1)})\big)(x_1,x_2;x_3)
		&=&x_1\diamond\ff(x_2,x_3)-x_2\diamond\ff(x_1,x_3)-\ff(x_1,x_2)\diamond x_3+\ff(x_2,x_1)\diamond x_3\\
		&&-\ff(x_1\diamond x_2,x_3)+\ff(x_2\diamond x_1,x_3)-\ff(x_2,x_1\diamond x_3)+\ff(x_1,x_2\diamond x_3).
	\end{eqnarray*}
\end{pro}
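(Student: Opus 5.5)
We compute $d^2_\nov(\ff)=(\Phi_3\Psi_3)\circ d^2_\ce\circ(\Phi_2\Psi_2)^{-1}(\ff)$ directly, following the pattern of the proof of Theorem \ref{pre-Lie-cochain}. Here $n=2$ and $K_2=\{(1,1)\}$, with corresponding sequence $(p_1,p_2)=(1,0)\in P_2^\ord$. So $\Hom(\g\ot\g,\g)$ is the whole of $\frkC^2_\nov(\g)$, and the preimage is
\[
F:=(\Phi_2\Psi_2)^{-1}(\ff)=\mu\circ(D,\id)\ot\ff-\mu\circ(\id,D)\ot\ff^{(1\,2)}=e\ot \ff-e^{(1\,2)}\ot\ff^{(1\,2)},
\]
using $\Sh^{-1}(1,1)=\{\id,(1\,2)\}$. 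The set $P_3^\ord=\{(2,0,0),(1,1,0)\}$ corresponds to $K_3=\{(1,0,2),(0,2,1)\}$. This confirms that only the two components $d^{(1,1)}_{(1,0,2)}$ and $d^{(1,1)}_{(0,2,1)}$ occur. Moreover, by the definition of $\Phi_3$, the value of $d^{(1,1)}_{(1,0,2)}(\ff)$ is the $\End_\g(3)$-coefficient of the basis element $a_1^{(2)}a_2a_3$ in $d^2_\ce(F)$, and the value of $d^{(1,1)}_{(0,2,1)}(\ff)$ is the coefficient of $a_1'a_2'a_3$.

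Next, expand $d^2_\ce(F)$ using the explicit formula for $d^2_\ce$ stated after Proposition \ref{CEO-cochain}, with
\[
\alpha(\nu)=e\ot\omega-e^{(1\,2)}\ot\omega^{(1\,2)} .
\]
This produces three terms of the form $(\alpha(\nu)\circ(\id,F))^\sigma$ and three terms of the form $(F\circ(\alpha(\nu),\id))^\sigma$. Each one expands into four Hadamard products, because $\alpha(\nu)$ and $F$ each have two summands, giving 24 pieces in total. In each piece, compute the $rNov$ factor inside $ComDer$ by the Leibniz rule. For example:
\begin{align*}
e\circ(\id,e)&=a_1'\,(a_2'a_3)=a_1'a_2'a_3,\\
e^{(1\,2)}\circ(\id,e)&=a_1\,(a_2'a_3)'=a_1a_2''a_3+a_1a_2'a_3',\\
e\circ(e,\id)&=(a_1'a_2)'a_3=a_1''a_2a_3+a_1'a_2'a_3,
\end{align*}
and so on. Then apply the permutations $\sigma$ via \eqref{action-on-rNov}, which simply permutes the exponents $(p_1,p_2,p_3)$. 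On the $\End_\g$ side, read each composite as an explicit expression, for instance $(\omega\circ(\id,\ff))(x_1,x_2,x_3)=x_1\diamond\ff(x_2,x_3)$, and apply $\sigma$ via \eqref{action-on-map}.

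Finally, collect the terms whose $rNov$-factor equals $a_1''a_2a_3$ (exponent $(2,0,0)$) and those whose factor equals $a_1'a_2'a_3$ (exponent $(1,1,0)$). Discard all other basis monomials, such as $(0,2,0)$ or $(1,0,1)$; they are determined by antisymmetry and $\Phi_3$ ignores them.

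The main obstacle is bookkeeping. One must track signs $(-1)^\sigma$, the minus sign in $\alpha(\nu)$, and the $(1\,2)$-twists of both $\omega$ and $\ff$, and one must correctly identify which permuted monomials land on the two ordered exponent patterns. Leibniz expansions produce several monomials per piece, so for each of the 24 pieces I would first list its exponent vectors after the permutation, keep only those equal to $(2,0,0)$ or $(1,1,0)$, and only then evaluate the corresponding $\End_\g$ components on $x_1,x_2,x_3$.

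Two checks guard against errors. First, $d^{(1,1)}_{(1,0,2)}(\ff)$ must come out antisymmetric in $x_2,x_3$, and $d^{(1,1)}_{(0,2,1)}(\ff)$ must come out antisymmetric in $x_1,x_2$. Second, for $(0,2,1)$ the result must reduce to the pre-Lie differential $d^2_\pl$ (the formula in Theorem \ref{pre-Lie-cochain} with $n=2$), which matches the stated eight-term formula. The four-term $(1,0,2)$ expression only arises from pieces that contain $D^2$ on the first slot: $e\circ(e,\id)$-type terms on the $F\circ(\alpha(\nu),\id)$ side, and $e\circ(\id,e)$-type terms twisted by permutations. These are the pieces to scrutinize most carefully.
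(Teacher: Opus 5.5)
Your proposal is correct and is essentially the paper's own proof: lift $f_{(1,1)}$ to $F=e\otimes f_{(1,1)}-e^{(1\,2)}\otimes (f_{(1,1)})^{(1\,2)}$, expand $d^2_{\rm CE}(F)$ into its 24 Hadamard pieces using the Leibniz rule in $ComDer$, and read off the coefficients of $\mu_3\circ(D^2,D^0,D^0)$ and $\mu_3\circ(D^1,D^1,D^0)$. One slip in your closing remark: $e\circ(\id,e)=a_1'a_2'a_3$ contains no $D^2$, so on the $\alpha(\nu)\circ(\id,F)$ side the $(2,0,0)$ contributions come instead from $e^{(1\,2)}\circ(\id,e)=a_1a_2''a_3+a_1a_2'a_3'$, twisted by $(1\,2)$ and $(1\,2\,3)$; these give $f_{(1,1)}(x_1,x_3)\diamond x_2-f_{(1,1)}(x_1,x_2)\diamond x_3$, and they sit alongside the $e\circ(e,\id)$ terms with $\id$ and $(2\,3)$ on the $F\circ(\alpha(\nu),\id)$ side.
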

\begin{proof}
	For $n=2$, we have $K_2=\{(1,1)\}$, $P_2=\{(1,0),(0,1)\}$ and $\Sh^{-1}(1,1)=\{(1),(1\,2)\}$. Then $\frkC_{\nov}^2(\g)=\Hom(\g\ot\g,\g)$. Let $f_{(1,1)}\in \Hom(\g\ot\g,\g)$.
 By Theorem \ref{cochain-Nov}, we know that
$$
d(\ff)=\{d_{(0,2,1)}^{(1,1)}(f_{(1,1)}),d_{(2,0,0)}^{(1,1)}(f_{(1,1)})\}\in \Hom(\wedge^2\g\ot\g,\g)\oplus\Hom(\g\ot\wedge^2\g,\g)=\frkC_{\nov}^3(\g).
$$

 By Lemma \ref{Psi}, we have
$$
\Phi_{2}^{-1}(f_{(1,1)})=\{f^{(1,0)}:=f_{(1,1)},f^{(0,1)}:=-(f_{(1,1)})^{(1\,2)}\}.
$$
 By Proposition \ref{psi}, we have
\begin{align*}
	\Psi_2^{-1}\circ\Phi_{2}^{-1}(f_{(1,1)})&=\mu_2\circ(D^1,D^0)\ot f^{(1,0)}+\mu_2\circ(D^0,D^1)\ot f^{(0,1)}\\
	&=\mu_2\circ(D^1,D^0)\ot f_{(1,1)}-\mu_2\circ(D^0,D^1)\ot (f_{(1,1)})^{(1\,2)},
\end{align*}
For simplicity, denote $\omega(x\ot y):=x\diamond y$. Recall that
$$
\alpha(\nu)=(\mu_2\circ(D^1,D^0))\ot \omega-(\mu_2\circ(D^0,D^1))\ot \omega^{(1\,2)},
$$
where actually $D^0=\id$. By abuse of notation, we denote $\id\ot\id$ by $\id_{rNov}\ot\id_{\End_\g}\in rNov(1)\ot \End_\g(1)$.  Thus, we have
\begin{align*}
	&\quad d_\ce^2(\Psi_n^{-1}\circ\Phi_{n}^{-1}(f_{(1,1)}))\\
	&=\alpha(\nu)\circ\bigl(\id\ot \id,(\mu_2\circ(D^1,D^0))\ot f_{(1,1)}-(\mu_2\circ(D^0,D^1))\ot (f_{(1,1)})^{(1\,2)}\bigr)\\
	&\quad -\Big(\alpha(\nu)\circ\bigl(\id\ot \id,(\mu_2\circ(D^1,D^0))\ot f_{(1,1)}-(\mu_2\circ(D^0,D^1))\ot (f_{(1,1)})^{(1\,2)}\bigr)\Big)^{(1\,2)}\\
	&\quad +\Big(\alpha(\nu)\circ\bigl(\id\ot \id,(\mu_2\circ(D^1,D^0))\ot f_{(1,1)}-(\mu_2\circ(D^0,D^1))\ot (f_{(1,1)})^{(1\,2)}\bigr)\Big)^{(1\,2,3)}\\
	&\quad -\bigl(\mu_2\circ(D^1,D^0)\ot f_{(1,1)}-\mu_2\circ(D^0,D^1)\ot (f_{(1,1)})^{(1\,2)}\bigr)\circ (\alpha(\nu),\id\ot\id)\\
	&\quad +\Big(\bigl(\mu_2\circ(D^1,D^0)\ot f_{(1,1)}-\mu_2\circ(D^0,D^1)\ot (f_{(1,1)})^{(1\,2)}\bigr)\circ (\alpha(\nu),\id\ot\id)\Big)^{(2\,3)}\\
	&\quad -\Big(\bigl(\mu_2\circ(D^1,D^0)\ot f_{(1,1)}-\mu_2\circ(D^0,D^1)\ot (f_{(1,1)})^{(1\,2)}\bigr)\circ (\alpha(\nu),\id\ot\id)\Big)^{(1\,3\,2)}.
\end{align*}
There are totally 24 items here. We will only calculate two items in detail and the rest can be obtained similarly. The first item in the first line:
\begin{align*}
	&\quad\big((\mu_2\circ(D^1,D^0))\ot \omega\big)\circ\big(\id\ot \id,(\mu_2\circ(D^1,D^0))\ot f_{(1,1)}\big)\\
	&=\big((\mu_2\circ(D^1,D^0))\circ (\id,\mu_2\circ(D^1,D^0))\big)\ot \big(\omega\circ (\id, f_{(1,1)})\big)\\
	&=\big(\mu_2\circ (D^1\circ\id,D^0\circ(\mu_2\circ(D^1,D^0)))\big)\ot \big(\omega\circ (\id, f_{(1,1)})\big)\\
	&=\big(\mu_2\circ (\id\circ D^1,\mu_2\circ(D^1,D^0))\big)\ot \big(\omega\circ (\id, f_{(1,1)})\big)\\
	&=\big((\mu_2\circ(\id,\mu_2))\circ (D^1,D^1,D^0)\big)\ot \big(\omega\circ (\id, f_{(1,1)})\big)\\
	&=\big(\mu_3\circ(D^1,D^1,D^0)\big)\ot \big(\omega\circ (\id, f_{(1,1)})\big),
\end{align*}
and the first item in the last line:
\begin{align*}
	&\quad\Bigl(\bigl((\mu_2\circ(D^1,D^0)\ot f_{(1,1)}\bigr)\circ\bigl((\mu_2\circ(D^1,D^0))\ot \omega,\id\ot\id\bigr)\Bigr)^{(1\,3\,2)}\\
	&=\Bigl(\bigl((\mu_2\circ(D^1,D^0))\circ (\mu_2\circ(D^1,D^0),\id)\bigr)\ot \bigl(f_{(1,1)}\circ(\omega,\id)\bigr)\Bigr)^{(1\,3\,2)}\\
	&=\Bigl(\bigl(\mu_2\circ (D^1\circ(\mu_2\circ(D^1,D^0)),D^0\circ\id)\bigr)\ot \bigl(f_{(1,1)}\circ(\omega,\id)\bigr)\Bigr)^{(1\,3\,2)}\\
	&=\Bigl(\bigl(\mu_2\circ ((\mu_2\circ(D^1,\id))\circ(D^1,D^0)+(\mu_2\circ(\id,D^1))\circ(D^1,D^0),D^0\circ\id)\bigr)\ot \bigl(f_{(1,1)}\circ(\omega,\id)\bigr)\Bigr)^{(1\,3\,2)}\\
	&=\Bigl(\bigl(\mu_2\circ (\mu_2\circ(D^2,D^0)+\mu_2\circ(D^1,D^1),\id\circ D^0)\bigr)\ot \bigl(f_{(1,1)}\circ(\omega,\id)\bigr)\Bigr)^{(1\,3\,2)}\\
	&=\Bigl(\bigl((\mu_2\circ(\mu_2,\id)) \circ((D^2,D^0,D^0)+(D^1,D^1,D^0))\bigr)\ot \bigl(f_{(1,1)}\circ(\omega,\id)\bigr)\Bigr)^{(1\,3\,2)}\\
	&=\Bigl(\bigl(\mu_3\circ((D^2,D^0,D^0)+(D^1,D^1,D^0))\bigr)\ot \bigl(f_{(1,1)}\circ(\omega,\id)\bigr)\Bigr)^{(1\,3\,2)}\\
	&=\bigl(\mu_3\circ((D^2,D^0,D^0)+(D^1,D^1,D^0))\bigr)^{(1\,3\,2)}\ot \bigl(f_{(1,1)}\circ(\omega,\id)\bigr)^{(1\,3\,2)}\\
	&=\bigl(\mu_3^{(1\,3\,2)}\circ((D^0,D^2,D^0)+(D^0,D^1,D^1))\bigr)\ot \bigl(f_{(1,1)}\circ(\omega,\id)\bigr)^{(1\,3\,2)}\\
	&=\bigl(\mu_3\circ((D^0,D^2,D^0)+(D^0,D^1,D^1))\bigr)\ot \bigl(f_{(1,1)}\circ(\omega,\id)\bigr)^{(1\,3\,2)}.
\end{align*}
Therefore, we have
\begin{align*}
	&\quad d_\ce^2(\Psi_2^{-1}\circ\Phi_2^{-1}(f_{(1,1)}))\\
	&=\bigl(\mu_3\circ(D^1,D^1,D^0)\bigr)\ot \bigl(\omega\circ (\id, f_{(1,1)})\bigr)\\
	&\quad-\bigl(\mu_3\circ((D^0,D^2,D^0)+(D^0,D^1,D^1))\bigr)\ot\bigl(\omega^{(1\,2)}\circ(\id,\ff)\bigr)\\
	&\quad-\bigl(\mu_3\circ(D^1,D^0,D^1)\bigr)\ot \bigl(\omega\circ(\id,(\ff)^{(1\,2)})\bigr)\\
	&\quad+\bigl(\mu_3\circ((D^0,D^1,D^1)+(D^0,D^0,D^2))\bigr)\ot\bigl(\omega^{(1\,2)}\circ(\id,(\ff)^{(1\,2)} )\bigr)\\
	&\quad-\big(\mu_3\circ(D^1,D^1,D^0)\big)\ot \big(\omega\circ (\id, f_{(1,1)})\big)^{(1\,2)}\\
	&\quad+\bigl(\mu_3\circ((D^2,D^0,D^0)+(D^1,D^0,D^1))\bigr)\ot\bigl(\omega^{(1\,2)}\circ(\id,\ff)\bigr)^{(1\,2)}\\
	&\quad+\bigl(\mu_3\circ(D^0,D^1,D^1)\bigr)\ot \bigl(\omega\circ(\id,(\ff)^{(1\,2)})\bigr)^{(1\,2)}\\
	&\quad-\bigl(\mu_3\circ((D^1,D^0,D^1)+(D^0,D^0,D^2))\bigr)\ot\bigl(\omega^{(1\,2)}\circ(\id,(\ff)^{(1\,2)} )\bigr)^{(1\,2)}\\
	&\quad+\bigl(\mu_3\circ(D^1,D^0,D^1)\bigr)\ot \bigl(\omega\circ (\id, f_{(1,1)})\bigr)^{(1\,2\,3)}\\
	&\quad-\bigl(\mu_3\circ((D^2,D^0,D^0)+(D^1,D^1,D^0))\bigr)\ot\bigl(\omega^{(1\,2)}\circ(\id,\ff)\bigr)^{(1\,2\,3)}\\
	&\quad-\bigl(\mu_3\circ(D^0,D^1,D^1)\bigr)\ot \bigl(\omega\circ(\id,(\ff)^{(1\,2)})\bigr)^{(1\,2\,3)}\\
	&\quad+\bigl(\mu_3\circ((D^1,D^1,D^0)+(D^0,D^2,D^0))\bigr)\ot\bigl(\omega^{(1\,2)}\circ(\id,(\ff)^{(1\,2)} )\bigr)^{(1\,2\,3)}\\
	&\quad-\bigl(\mu_3\circ((D^2,D^0,D^0)+(D^1,D^1,D^0))\bigr)\ot\bigl(\ff\circ(\omega,\id)\bigr)\\
	&\quad+\bigl(\mu_3\circ(D^1,D^0,D^1)\bigr)\ot\bigl((\ff)^{(1\,2)}\circ(\omega,\id)\bigr)\\
	&\quad+\bigl(\mu_3\circ((D^1,D^1,D^0)+(D^0,D^2,D^0))\bigr)\ot\bigl(\ff\circ(\omega^{(1\,2)},\id)\bigr)\\
	&\quad-\bigl(\mu_3\circ(D^0,D^1,D^1)\bigr)\ot\bigl((\ff)^{(1\,2)}\circ(\omega^{(1\,2)},\id)\bigr)\\
	&\quad+\bigl(\mu_3\circ((D^2,D^0,D^0)+(D^1,D^0,D^1))\bigr)\ot\bigl(\ff\circ(\omega,\id)\bigr)^{(2\,3)}\\
	&\quad-\bigl(\mu_3\circ(D^1,D^1,D^0)\bigr)\ot\bigl((\ff)^{(1\,2)}\circ(\omega,\id)\bigr)^{(2\,3)}\\
	&\quad-\bigl(\mu_3\circ((D^1,D^0,D^1)+(D^0,D^0,D^2))\bigr)\ot\bigl(\ff\circ(\omega^{(1\,2)},\id)\bigr)^{(2\,3)}\\
	&\quad+\bigl(\mu_3\circ(D^0,D^1,D^1)\bigr)\ot\bigl((\ff)^{(1\,2)}\circ(\omega^{(1\,2)},\id)\bigr)^{(2\,3)}\\
	&\quad-\bigl(\mu_3\circ((D^0,D^2,D^0)+(D^0,D^1,D^1))\bigr)\ot\bigl(\ff\circ(\omega,\id)\bigr)^{(1\,3\,2)}\\
	&\quad+\bigl(\mu_3\circ(D^1,D^1,D^0)\bigr)\ot\bigl((\ff)^{(1\,2)}\circ(\omega,\id)\bigr)^{(1\,3\,2)}\\
	&\quad+\bigl(\mu_3\circ((D^0,D^1,D^1)+(D^0,D^0,D^2))\bigr)\ot\bigl(\ff\circ(\omega^{(1\,2)},\id)\bigr)^{(1\,3\,2)}\\
	&\quad-\bigl(\mu_3\circ(D^1,D^0,D^1)\bigr)\ot\bigl((\ff)^{(1\,2)}\circ(\omega^{(1\,2)},\id)\bigr)^{(1\,3\,2)}.
\end{align*}
Note that $P_3^{\ord}=\{(2,0,0),(1,1,0)\}$. Then comparing the coefficients of $\mu_3\circ(D^2,D^0,D^0)$, we obtain
\begin{align*}
d_{(1,0,2)}^{(1,1)}(f_{(1,1)})=	\bigl(\omega^{(1\,2)}\circ(\id,\ff)\bigr)^{(1\,2)}-\bigl(\omega^{(1\,2)}\circ(\id,\ff)\bigr)^{(1\,2\,3)}-\ff\circ(\omega,\id)+\bigl(\ff\circ(\omega,\id)\bigr)^{(2\,3)}.
\end{align*}
More precisely, for any $x_1,x_2,x_3\in\g$, we have
\begin{align*}
	&\quad d_{(1,0,2)}^{(1,1)}(f_{(1,1)})(x_1;x_2,x_3)\\
	&=\Bigl(\bigl(\omega^{(1\,2)}\circ(\id,\ff)\bigr)^{(1\,2)}-\bigl(\omega^{(1\,2)}\circ(\id,\ff)\bigr)^{(1\,2\,3)}
	-\ff\circ(\omega,\id)+\bigl(\ff\circ(\omega,\id)\bigr)^{(2\,3)}\Bigr)(x_1;x_2,x_3)\\
	&=\ff(x_1,x_3)\diamond x_2-\ff(x_1,x_2)\diamond x_3-\ff(x_1\diamond x_2,x_3)+\ff(x_1\diamond x_3,x_2).
\end{align*}
Similarly, comparing the coefficients of $\mu_3\circ(D^1,D^1,D^0)$, we obtain
\begin{align*}
	d_{(0,2,1)}^{(1,1)}(f_{(1,1)})&=\omega\circ (\id, f_{(1,1)})
	-\big(\omega\circ (\id, f_{(1,1)})\big)^{(1\,2)}
	-\bigl(\omega^{(1\,2)}\circ(\id,\ff)\bigr)^{(1\,2\,3)}
	\\
	&\quad+\bigl(\omega^{(1\,2)}\circ(\id,(\ff)^{(1\,2)} )\bigr)^{(1\,2\,3)}
-\ff\circ(\omega,\id)
	+\ff\circ(\omega^{(1\,2)},\id)
	\\&\quad-\bigl((\ff)^{(1\,2)}\circ(\omega,\id)\bigr)^{(2\,3)}
	+\bigl((\ff)^{(1\,2)}\circ(\omega,\id)\bigr)^{(1\,3\,2)}.
\end{align*}
More precisely, for any $x_1,x_2,x_3\in\g$, we have
\begin{align*}
	&\quad d_{(0,2,1)}^{(1,1)}(f_{(1,1)})(x_1,x_2;x_3)\\
	&=x_1\diamond\ff(x_2,x_3)-x_2\diamond\ff(x_1,x_3)-\ff(x_1,x_2)\diamond x_3+\ff(x_2,x_1)\diamond x_3\\
	&\quad-\ff(x_1\diamond x_2,x_3)+\ff(x_2\diamond x_1,x_3)-\ff(x_2,x_1\diamond x_3)+\ff(x_1,x_2\diamond x_3).
\end{align*}
The proof is finished.
\end{proof}

\begin{rmk}
To make the idea of the proof more observable, one may use the following hint. Given a Novikov algebra $(\g,\diamond)$ and $f_{(1,1)}\in \Hom (\g\otimes \g, \g)$, consider the vector space
$L = F\otimes \g$, where $F$ is the
free commutative associative differential algebras on a countable set of variables $t_1,t_2,\dots $, i.e.,
$F=\mathds K[t_i^{(s)} \mid i=1,2,\dots,\, s=0,1,\dots ]$.
Then $L$ is a Lie algebra with respect to the bracket given by
\[
 [a\otimes x, b\otimes y] = a'b\otimes x\diamond y - ab'\otimes y\diamond x,
\]
for $x,y\in \g$, $a,b\in F$.
Consider
\[
 f(a\otimes x, b\otimes y) = a'b\otimes f_{(1,1)}(x,y) - ab'\otimes f_{(1,1)}(y,x).
\]
This is a skew-symmetric map from $\Hom (L\otimes L, L)$.
Evaluate $(d^2_{\rm CE}f)(t_1\otimes x, t_2\otimes y, t_3\otimes z)$
for $x,y,z \in \g$ by means of the classical formula \eqref{dce}.
Then collect similar terms to get
\[
 (d^2_{\rm CE}f)(t_1\otimes x, t_2\otimes y, t_3\otimes z)
 =
 t_1''t_2t_3 \otimes d^{(1,1)}_{(1,0,2)}(x,y,z) + t_1't_2't_3\otimes d^{(1,1)}_{(0,2,1)}(x,y,z) + \dots ,
\]
where the remaining terms are determined by the skew-symmetry condition.
\end{rmk}

\begin{pro}\label{pro-pL-rcom}
Let $x_1,\dots,x_{n+1}\in\g$.
\begin{itemize}
\item[{\rm (i)}] For $n\geq 1$ and $f\in \Hom(\wedge^{n-1}\g\ot \g,\g)$, the differential
\[
	d_{(0,\cdots,0,n,1)}^{(0,\cdots,0,n-1,1)}:\Hom(\wedge^{n-1}\g\ot \g,\g)\to \Hom(\wedge^{n}\g\ot \g,\g)
\]
is given by
\begin{eqnarray*}
	&&d_{(0,\cdots,0,n,1)}^{(0,\cdots,0,n-1,1)}(f)(x_1,\cdots,x_n;x_{n+1})\\
	&=&\sum\limits_{i=1}^n (-1)^{i+1}x_i \diamond f(x_1,\cdots,\hat{x_i},\cdots;x_{n+1})+\sum\limits_{i=1}^n(-1)^{i+1}f(x_1,\cdots,\hat{x_i},\cdots,x_n;x_i)\diamond x_{n+1}\\
	&&+\sum\limits_{i=1}^n(-1)^{i} f(x_1,\cdots,\hat{x_i},\cdots,x_n;x_i\diamond x_{n+1})\\
	&&+\sum\limits_{1\leq i<j\leq n}(-1)^{i+j} f(x_i\diamond x_j-x_j\diamond x_i,x_1,\cdots,\hat{x_i},\cdots,\hat{x_j},\cdots;x_{n+1}).
\end{eqnarray*}
where  $(0,\cdots,0,n-1,1)\in K_n$ and $(0,\cdots,0,n,1)\in K_{n+1}$.
\item[{\rm (ii)}] 	For $n\geq 2$ and $f\in\Hom(\g\otimes\wedge^{n-1}\g,\g)$,  the differential
\[
d^{(1,0,\dots,0,n-1)}_{(1,0,\dots,0,n)} \colon \Hom(\g\otimes\wedge^{n-1}\g,\g) \to \Hom(\g\otimes\wedge^{n}\g,\g)
\]
is given by
\begin{align*}
	&\bigl(d^{(1,0,\dots,0,n-1)}_{(1,0,\dots,0,n)}(f)\bigr)(x_1;x_2,\dots,x_{n+1}) \\
	&\quad = \sum_{i=2}^{n+1}(-1)^i f(x_1;x_2,\dots,\hat{x_i},\dots,x_{n+1})\diamond x_i
	- \sum_{i=2}^{n+1}(-1)^i f(x_1\diamond x_i;x_2,\dots,\hat{x_i},\dots,x_{n+1}),
\end{align*}
where  $(1,0,\dots,0,n-1)\in K_n$ and $(1,0,\dots,0,n)\in K_{n+1}$. 		
\end{itemize}
\end{pro}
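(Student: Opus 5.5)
Both parts will be computed exactly as in Proposition \ref{pro:formular 2-d}. We pass $f$ through $(\Phi_n\Psi_n)^{-1}$, apply $d^n_\ce$ from \eqref{dce}, and then read off the coefficient of a single basis monomial of $rNov(n+1)$.

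\textbf{Part (i).} Take the sequence $(p_1,\dots,p_n)=(1,\dots,1,0)\in P^{\ord}_n$, which corresponds to $(0,\dots,0,n-1,1)\in K_n$. Then
\[
(\Phi_n\Psi_n)^{-1}(f)=\sum_{i=1}^n(-1)^{n-i}\bigl(\mu_n\circ(D,\dots,D,\id)\bigr)^{(n\,\cdots\,i)}\ot f^{(n\,\cdots\,i)},
\]
since the inverse shuffles $\Sh^{-1}(n-1,1)$ are the cycles $(n\,n-1\,\cdots\,i)$. This has the same shape as $F_f$ in the proof of Theorem \ref{pre-Lie-cochain}, with $\xi_n$ replaced by $\mu_n\circ(D,\dots,D,\id)$.

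The target component is the coefficient of $\mu_{n+1}\circ(D,\dots,D,\id)$. We compose with $\alpha(\nu)=\mu\circ(D,\id)\ot\omega-\mu\circ(\id,D)\ot\omega^{(1\,2)}$ in both slots. By the Leibniz rule, $D$ applied to a product $\mu_n\circ(\dots)$ produces a sum of monomials, and some of these contain a $D^2$ or leave two inputs with $D^0$. Those monomials lie in other components and can be discarded.

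The key step is an analogue of Lemma \ref{pf-in-preL}: it records, for each composition, the part of the result proportional to a monomial in the $\s_{n+1}$-orbit of $(1,\dots,1,0)$. I expect the following.
\begin{itemize}
\item[(a)] $\mu\circ(D,\id)\circ(\id,\cdot)$ reproduces \eqref{pf-in-preL-1}.
\item[(b)] $\mu\circ(\id,D)\circ(\id,\cdot)$ contributes only through the summand where $D$ hits the unique undifferentiated input, reproducing \eqref{pf-in-preL-2}.
\item[(c)] Inserting $\mu\circ(D,\id)$ or $\mu\circ(\id,D)$ into a slot carrying $D$ gives, after Leibniz, exactly one summand of type $(1,1)$ for the two new inputs. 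Inserting into the $D^0$ slot gives the monomial directly.
\end{itemize}
This matches \eqref{pf-in-preL-3} and \eqref{pf-in-preL-4}. The $(1,\dots,1,0)$-part of $d_\ce$ therefore agrees term by term with the pre-Lie computation. The formula then follows verbatim from the bookkeeping of $A$, $B$, $C$ in that proof.

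\textbf{Part (ii).} Take $(p_1,\dots,p_n)=(n-1,0,\dots,0)$, with inverse shuffles $\Sh^{-1}(1,n-1)$ the cycles $(1\,2\,\cdots\,i)$. We extract the coefficient of $\mu_{n+1}\circ(D^n,\id,\dots,\id)$, and a degree count shows which terms survive.
\begin{itemize}
\item For $D^n$ to appear on a single input, the outer $D$ of $\mu\circ(D,\id)$ in the $\alpha(\nu)\circ(\id,F)$ term would have to act on the input already carrying $D^{n-1}$. But that input sits inside $F$, while the outer $D$ acts on the new first input. The $\omega$-part of the left composition therefore dies.
\item The $\omega^{(1\,2)}$-part applies $D$ to $F$, and only the Leibniz summand hitting $D^{n-1}$ survives. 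This yields the terms $f(x_1;\dots,\hat{x_i},\dots)\diamond x_i$.
\item In the right composition, $\alpha(\nu)$ must be plugged into the $D^{n-1}$ slot, with $D$ landing on the first new variable. This yields $f(x_1\diamond x_i;\dots)$.
\item Plugging into a $D^0$ slot keeps a $D^1$ on some input, so those terms drop out.
\end{itemize}

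The main obstacle is the sign and permutation bookkeeping: one must identify which shuffle $\sigma$ sends each surviving monomial to the chosen ordered representative, and track the signs $(-1)^\sigma(-1)^{n-i}$. I would first verify the cases $n=2$ against Proposition \ref{pro:formular 2-d} to fix conventions, and then run the general count.
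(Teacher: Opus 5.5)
Your proposal is correct and follows the paper's route, which is the direct computation of Proposition~\ref{pro:formular 2-d}: pull $f$ back through $(\Phi_n\Psi_n)^{-1}$, apply $d^n_{\ce}$, and read off the coefficient of the ordered monomial. Reducing part (i) to the bookkeeping of Theorem~\ref{pre-Lie-cochain} is exactly what the morphism $\gamma\colon rNov\to Perm$ in the paper's later remark justifies. One small point in (ii): inserting $\mu\circ(\id,D)$ into the $D^{n-1}$ slot does produce a monomial in the orbit of $(n,0,\dots,0)$, with $D^n$ on the second new input, so it is not discarded by the degree count; it drops out only because every $\sigma_{s,t}\in\Sh^{-1}(2,n-1)$ moves that input to position $t\geq 2$.
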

\begin{proof}
	The proof is analogous to that of Proposition~\ref{pro:formular 2-d}.
\end{proof}

For $n\geq 1$, define
\begin{eqnarray*}
	\overline{\frkC}_{\nov}^n(\g):=\underset{(k_{n-1},\cdots,k_0)\in K_n^{\prime}}{\oplus}\Hom(\wedge^{k_{n-1}}\g\ot\cdots\ot\wedge^{k_0}\g,\g),
\end{eqnarray*}
where $K_n^{\prime}=K_n \setminus(0,\cdots,0,n-1,1)$.  By the diagram \eqref{diagram-Nov-cohomology}, $\overline{\frkC}_{\nov}^\bullet(\g)$ is a   subcomplex of $\frkC_{\nov}^\bullet(\g)$, whose $n$-th cohomology group is denoted by $\overline{\huaH}^n_\nov(\g;\g)$.
Thus we have the quotient  complex $\frkC_{\nov}^\bullet(\g)/\overline{\frkC}_{\nov}^\bullet(\g)=\oplus_{i=1}^\infty\Hom(\wedge^{i-1}\g\ot\g,\g)$. By Proposition \ref{pro-pL-rcom}, we obtain the following relation between the cochain complex of a Novikov algebra and the cochain complex of the underlying pre-Lie algebra.
\begin{thm}\label{pro:pre-Lie-cochain}
	Let $(\g,\diamond)$ be a Novikov algebra. Then the cochain complex $\frkC_{\pl}^\bullet(\g)$ of the underlying pre-Lie algebra $(\g,\diamond)$ is isomorphic to a quotient  of the cochain complex  $\frkC_{\nov}^\bullet(\g)$ of the Novikov algebra $(\g,\diamond)$, i.e. we have the following short exact sequence of cochain complexes:
\begin{equation}
0 \to \overline{\frkC}_{\nov}^\bullet(\g) \stackrel{\iota}{\to}  {\frkC}_{\nov}^\bullet(\g) \stackrel{\pi}{\to} \frkC_{\pl}^\bullet(\g) \to 0,
\end{equation}
where $\iota$ and $\pi$ are the natural inclusion and projection.

Consequently, there is a long exact sequence of the cohomology groups:
\begin{equation}
\cdots \to \overline{\huaH}^n_\nov(\g;\g)   \stackrel{ \huaH^n(\iota)}{\to}  \huaH^n_\nov(\g;\g) \stackrel{ \huaH^n(\pi)}{\to} \huaH^n_\pl(\g;\g)   \stackrel{ c^n}{\to}   \overline{\huaH}^{n+1}_\nov(\g;\g) \to\cdots,
\end{equation}
and the connecting map $c^n$ is given by:
\begin{align*}
\begin{split}
c^n([f])= \left \{
\begin{array}{ll}
    {[}0],                    & n=1,\\
    {[}d_{(0,\cdots,0,1,n-2,2)}^{(0,\cdots,0,n-1,1)}(f)],     & n\geq 2,
\end{array}
\right.
\end{split}
\end{align*}
for any $[f]\in \huaH^n_\pl(\g;\g)$.
\end{thm}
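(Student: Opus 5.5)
The plan has three parts: identify the complexes, derive the long exact sequence, and compute the connecting map by diagram chasing.

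\textbf{Identifying the short exact sequence.} First I will check that $\overline{\frkC}_{\nov}^\bullet(\g)$ is a subcomplex, i.e. that $d_\nov$ never maps a summand indexed by $K_n'$ into the summand $\Hom(\wedge^{n}\g\ot\g,\g)$ indexed by $(0,\cdots,0,n,1)\in K_{n+1}$. By the description of $K'_{n+1}$ following Definition \ref{def-cohomology-Nov}, the targets of $d^{(k_{n-1},\cdots,k_0)}$ are obtained from $(k_{n-1},\cdots,k_0)$ by one of two moves:
\begin{itemize}
\item lowering some $k_j$ with $j\geq 1$ and raising $k_{j-1}$ together with $k_0$;
\item raising $k_1$ only.
\end{itemize}
To reach $(0,\cdots,0,n,1)$, which has $k_0=1$, the source must itself have $k_0=1$ and use the last move, or have $k_0=0$. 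Sources with $k_0=0$ are impossible: since $\sum k_i=n$ and $\sum ik_i=n-1$, some entry $k_0$ must be positive. So the only source is $(0,\cdots,0,n-1,1)$. Hence $\iota$ is a chain map.

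The quotient, as graded space, is $\oplus_n\Hom(\wedge^{n-1}\g\ot\g,\g)=\frkC^\bullet_\pl(\g)$, and its differential is the component $d^{(0,\cdots,0,n-1,1)}_{(0,\cdots,0,n,1)}$. By Proposition \ref{pro-pL-rcom}(i), this component coincides with $d_\pl^n$. Therefore $\pi$, the projection onto the $(0,\cdots,0,n-1,1)$-summand, is a surjective chain map with kernel $\overline{\frkC}_\nov^\bullet(\g)$, and the sequence is short exact.

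\textbf{The long exact sequence.} This is the standard snake lemma consequence of the short exact sequence.

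\textbf{The connecting map.} I will use the usual chase. Given a cocycle $f\in\frkC^n_\pl(\g)$, lift it to $f\in\Hom(\wedge^{n-1}\g\ot\g,\g)\subset\frkC^n_\nov(\g)$ via the canonical splitting. Apply $d_\nov$: its component in $(0,\cdots,0,n,1)$ is $d_\pl f=0$. Then $c^n[f]$ is the class of the remaining components, which lie in $\overline{\frkC}^{n+1}_\nov$.

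Reading off $K'_{n+1}$ for the source $(0,\cdots,0,n-1,1)$, the only nonzero $k_j$ with $j\ge1$ is $k_1=n-1$. The "lower $k_j$" moves therefore produce only $(0,\cdots,0,1,n-2,2)$, and only when $n-1\geq1$. For $n=1$ the source is $(1)$, i.e. $\Hom(\g,\g)$, and the move lowering $k_0$ is not of this type. Checking with the diagram \eqref{diagram-Nov-cohomology}, the only arrow out of $\Hom(\g,\g)$ is the blue one to $(1,1)$, which is the pre-Lie component itself. So for $n=1$ the remaining part is zero and $c^1=0$.

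For $n\geq2$ there is exactly one extra component, so $c^n[f]=[d^{(0,\cdots,0,n-1,1)}_{(0,\cdots,0,1,n-2,2)}(f)]$. Well-definedness is standard and needs no further work.

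\textbf{Expected obstacle.} The main subtlety is the index bookkeeping in the first part. I must confirm that the list $K'_{n+1}$ is exhaustive for the targets of $d_\nov$, which rests on which $Perm$-type monomials $\mu_{n+1}\circ(D^{q})$ arise when $D$ hits a product. I would justify exhaustiveness by the same Leibniz-rule mechanism displayed in the proof of Proposition \ref{pro:formular 2-d}: composition with $\alpha(\nu)$ either inserts a new $D^1$ and a $D^0$, or distributes a single $D$ onto one factor, raising one exponent by one.
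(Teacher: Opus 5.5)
Your route is the paper's route: the subcomplex property read off from the list $K'_{n+1}$, the quotient differential identified via Proposition \ref{pro-pL-rcom}(i), and the connecting map computed from the canonical lift. However, you resolve the step you flagged as the obstacle incorrectly, and the claim you intend to confirm is false.

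In the part $F\circ(\alpha(\nu),\id,\cdots,\id)$ of $d_{\rm CE}$, the product $D(a)b$ (or $aD(b)$) is inserted into a slot carrying $D^{p}$. The Leibniz rule then gives $D^{p}(D(a)b)=\sum_{i=0}^{p}\binom{p}{i}D^{1+i}(a)D^{p-i}(b)$. So the general move replaces one exponent $j$ by an arbitrary pair $\{a,j+1-a\}$. It is not merely ``insert a new $D^1$'' or ``raise one exponent by one and insert $D^0$''.

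For $j\geq3$ this produces targets outside $K'_{n+1}$. Take $f\in\Hom(\g\ot\wedge^3\g,\g)$, with index $(1,0,0,3)$. The only contribution to $\mu_5\circ(D^2,D^2,D^0,D^0,D^0)$ comes from the term of $F$ with $D^3$ in slot $1$ and the shuffle $\sigma=\id$. The resulting component of $d_\nov f$ in $\Hom(\wedge^2\g\ot\wedge^3\g,\g)$, with index $(0,0,2,0,3)$, is $-3f(x_1\diamond x_2-x_2\diamond x_1;x_3,x_4,x_5)$. This is nonzero in general, and the arrow is also absent from \eqref{diagram-Nov-cohomology}. An argument resting on exhaustiveness of $K'_{n+1}$ therefore does not go through as written.

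There is also a minor slip: the moves listed in $K'_{n+1}$ lower $k_j$ and raise $k_{j+1}$, not $k_{j-1}$. Your later target $(0,\cdots,0,1,n-2,2)$ does use the correct direction.

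The theorem still holds, because you only need a weaker statement, and it is valid for the full move set. Each move replaces some exponent $j$ by two exponents summing to $j+1$, so the number $k_0$ of zero exponents never decreases. The conditions $\sum k_i=n$ and $\sum ik_i=n-1$ give $k_0=1+\sum_{i\geq2}(i-1)k_i\geq1$. Since $(0,\cdots,0,n,1)$ has $k_0=1$, only the summand $(0,\cdots,0,n-1,1)$ can map to it, which is the subcomplex property.

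For the connecting map, the source exponent multiset $\{1^{n-1},0\}$ has all exponents $\leq1$. The only available moves are $0\mapsto\{1,0\}$, $1\mapsto\{1,1\}$ and $1\mapsto\{2,0\}$. These produce exactly the pre-Lie index and, for $n\geq2$, the index $(0,\cdots,0,1,n-2,2)$, which is what your computation of $c^n$ needs.

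A cleaner alternative is the remark following the theorem. The operad morphism $\gamma:rNov\to Perm$ kills exactly the monomials with some exponent $\geq2$. Hence $\tilde\gamma$ is a surjective chain map whose kernel corresponds to $\overline{\frkC}^\bullet_\nov(\g)$. Theorem \ref{pre-Lie-cochain} then identifies the quotient with $\frkC^\bullet_\pl(\g)$ without any enumeration of targets.
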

\begin{proof}
It is obvious that the short exact sequence of cochain complexes induces a long exact sequence of cohomology groups. Moreover, by diagram chasing, the connecting map $c^n$ is given by:
\begin{align*}
	c^n([f])= [\iota_{n+1}^{-1}( d_\nov^n( \pi_n^{-1}(f)  ) )],
\end{align*}
for any $[f]\in \huaH^n_\pl(\g;\g)$.
Since the connecting map $c^n$ is independent of the choice of the preimage under $\pi_n$, we can choose the natural lift of $f$ itself, namely $\pi_n^{-1}(f)=f$.
Then by the definition of $d_\nov^n$, for any $f\in \Hom(\wedge^{n-1}\g\ot\g,\g)=\frkC_\pl^n(\g)$, we have
\begin{align*}
\begin{split}
d_\nov^n(f)= \left \{
\begin{array}{ll}
	d_{(1,1)}^{(1)}(f),                    & n=1,\\
	\{d_{(0,0,\cdots,0,1,n-2,2)}^{(0,\cdots,0,n-1,1)}(f)+d_{(0,\cdots,0,n,1)}^{(0,\cdots,0,n-1,1)}(f) \},     & n\geq 2.
\end{array}
\right.
\end{split}
\end{align*}
Thus, we obtain
\begin{align*}
	\begin{split}
		\iota_{n+1}^{-1}(d_\nov^n(f))= \left \{
		\begin{array}{ll}
			0,                    & n=1,\\
			d_{(0,0,\cdots,0,1,n-2,2)}^{(0,\cdots,0,n-1,1)}(f),     & n\geq 2.
		\end{array}
		\right.
	\end{split}
\end{align*}
Therefore, one has
\begin{align*}
	\begin{split}
		c^n([f])= \left \{
		\begin{array}{ll}
			{[}0],                    & n=1,\\
			{[}d_{(0,\cdots,0,1,n-2,2)}^{(0,\cdots,0,n-1,1)}(f)],     & n\geq 2.
		\end{array}
		\right.
	\end{split}
\end{align*}
This completes the proof.
\end{proof}

Note that the relation described in Theorem \ref{pro:pre-Lie-cochain} is based on the explicit formula given in Proposition \ref{pro-pL-rcom}. We give an intrinsic explanation of this relation in the following remark.

\begin{rmk}
 It is direct to verify that any perm algebra is a special right Novikov algebra. In particular, there is a surjective morphism of operads $\gamma:rNov\twoheadrightarrow Perm$ defined by
$$
\gamma_n:rNov(n)\twoheadrightarrow Perm(n),\;\mu_n\circ(D^1,\dots,D^1,\underset{\uparrow\atop i\text{-th}}{D^0},D^1,\dots,D^1) \mapsto (\xi_n)^{(n\,n-1\,\cdots\,i)},\quad i=1,\dots,n,
$$
with all other basis elements sent to zero.

Let $(\g,\omega)$ be a Novikov algebra. Then by \eqref{Lie-to-rNov-Endg}, we obtain a morphism of operads $Lie\to  Perm\underset{{\rm H}}{\ot}\End_\g$:
\begin{align*}
	Lie\to rNov\underset{{\rm H}}{\ot}\End_\g \overset{{\rm \gamma\ot\id}}{\to} Perm\underset{{\rm H}}{\ot}\End_\g,\quad \nu\mapsto \xi_2\ot \omega-\xi_2^{(1\,2)}\ot \omega^{(1\,2)}.
\end{align*}

For all $n\geq 1$, the morphism $\gamma_n$ induces a surjective linear map
	\[
	\tilde{\gamma}_n\colon \ce^n(rNov\underset{\rm H}{\ot}\End_\g) \longrightarrow \ce^n(Perm\underset{\rm H}{\ot}\End_\g)
	\]
	defined  by
	\[
	\sum_{\sigma\in\Sh^{-1}(n-1,1)}(-1)^\sigma\bigl(\mu_n\circ(D^1,\dots,D^1,D^0)\ot f\bigr)^{\sigma}
	\longmapsto
	\sum_{\sigma\in\Sh^{-1}(n-1,1)}(-1)^\sigma (\xi_n\ot f)^\sigma,
	\]
where $f\in \Hom(\wedge^{n-1}\g\ot\g,\g)$. It is straightforward to check that
$$\tilde{\gamma}:=\{\tilde{\gamma}_n\}:\ceo(rNov\underset{\rm H}{\ot}\End_\g)\to \ceo(Perm\underset{\rm H}{\ot}\End_\g)
$$
is a morphism of cochain complexes. Hence, we have an isomorphism of cochain complexes:
	\begin{eqnarray*}
		\ceo(rNov\underset{\rm H}{\ot}\End_\g)/\ker\tilde{\gamma} &\cong& \ceo(Perm\underset{\rm H}{\ot}\End_\g).
	\end{eqnarray*}
Therefore by Theorems \ref{pre-Lie-cochain} and \ref{cochain-Nov}, the cochain complex $\frkC_{\pl}^\bullet(\g)$ of the pre-Lie algebra $(\g,\omega)$ is isomorphic to a quotient of the cochain complex  $\frkC_{\nov}^\bullet(\g)$ of the Novikov algebra $(\g,\omega)$.
 \end{rmk}

 The following example shows that there is an essential distinction between the cohomology group of a Novikov algebra and the cohomology group of the underlying pre-Lie algebra.
Based on deformation equations in the infinitesimal deformation, one can define the second-order cohomology group, which is consistent with our definition here. From this perspective, Alhussein computed concrete examples in \cite{Alussein}.

\begin{ex}\label{ex:diff}
Consider the Novikov algebra $(\mathds{K}[t],\diamond)$ given in Example \ref{ex-kt-p} (i) with $\lambda =0$.  Alhussein showed that
	$
	\huaH^2_{\nov}(\mathds{K}[t];\mathds{K}[t])=0
	$ in \cite{Alussein}. Next, we show that $ \huaH^2_\pl(\mathds{K}[t];\mathds{K}[t])\neq 0$.

Define a linear map $\kappa:\mathds{K}[t]\otimes\mathds{K}[t]\to\mathds{K}[t]$ by
	$$\kappa(t^i,t^j)=ijt^{i+j-2},\quad\forall i,j\in\mathds{N}.$$
	A direct computation shows that
	$\kappa \in\ker (d^2_\pl)$.
	We claim that $\kappa\notin\operatorname{Im}d^1_\pl$. Suppose that there exists  $f\in\Hom(\mathds{K}[t],\mathds{K}[t])$ such that
	$\kappa=d^1_\pl(f)$, and write $f(t)=\sum\limits_{n=0}^{\infty}a_nt^n$, i.e.
	$$\kappa(t^i,t^j)=d^1_\pl(f)(t^i,t^j)=t^i\diamond f(t^j)+f(t^i)\diamond t^j-f(t^i\diamond t^j),\quad\forall i,j\in\mathds{N}.$$
	Particularly, we have
	$$
	1=\kappa(t,t)=t\diamond f(t)+f(t)\diamond t-f(t\diamond t)=\sum\limits_{n=1}^{\infty}na_nt^n\in t\mathds K[t],
	$$
	which is obviously impossible. Therefore, $\kappa\notin\operatorname{Im}d^1_\pl$. Consequently,
	$
	0\neq[\kappa]\in H^2_\pl(\mathds{K}[t];\mathds{K}[t]),
	$
	so $ \huaH^2_\pl(\mathds{K}[t];\mathds{K}[t])\neq 0$. 	
	Therefore, the cohomology group of a Novikov algebra and the cohomology group of the underlying pre-Lie algebra are quite different.
\end{ex}

\section{Infinitesimal deformations of Novikov algebras}\label{sec:def}

In this section, we study infinitesimal deformations of Novikov algebras using the cohomology theory developed in the previous section.

\begin{defi}
	Let $(\g, \diamond)$ be a Novikov algebra and $\ome \in \Hom(\g\ot\g, \g)$. Define a binary operation on $\mathds{K}[t]/(t^2)\ot\g$ by
	$$
	x\diamond_t y:=x\diamond y+t \ome(x,y),\quad\forall x,y\in\g.
	$$
	If $(\mathds{K}[t]/(t^2)\ot\g, \diamond_t)$ is a Novikov algebra, we say that $\ome$ generates an \textbf{infinitesimal deformation} of $(\g, \diamond)$.
\end{defi}

\begin{defi}
Let $(\mathds{K}[t]/(t^2)\ot\g, \diamond_t)$, $(\mathds{K}[t]/(t^2)\ot\g, \diamond_t^{\prime})$ be two infinitesimal deformations of a Novikov algebra $(\g,\diamond)$ generated by $\ome$ and $\ome^{\prime}$ respectively.  They are called \textbf{equivalent} if there is a linear map $N\in\End(\g)$ such that $\id+tN:(\g,\diamond_t)\to (\g,\diamond_t^{\prime})$ is a homomorphism of Novikov algebras.
\end{defi}

\begin{thm}\label{deformation-2-cocycle}
	Let $(\g, \diamond)$ be a Novikov algebra.
\begin{itemize}
\item[{\rm (i)}]  $\ome\in\Hom(\g\otimes\g,\g)$ generates an infinitesimal deformation of $\g$ if and only if $\ome$ is a $2$-cocycle of $\g$.
\item[{\rm (ii)}] Two infinitesimal deformations $(\mathds{K}[t]/(t^2)\ot\g, \diamond_t)$, $(\mathds{K}[t]/(t^2)\ot\g, \diamond_t^{\prime})$ of the Novikov algebra $(\g,\diamond)$ generated by $\ome$ and $\ome^{\prime}$ respectively are equivalent if and only if  $\ome$ and $\ome^{\prime}$ are in the same cohomology class in  $\huaH^2(\g;\g)$.
\end{itemize}	
\end{thm}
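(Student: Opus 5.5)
The plan is to compare the defining identities of a Novikov algebra for $\diamond_t$, taken modulo $t^2$, with the explicit low-degree differentials in Proposition~\ref{pro:formular 2-d}. For part (i), I would expand the left-symmetry identity \eqref{l-Nov-1} and the right-commutativity identity \eqref{l-Nov-2} for $x\diamond_t y=x\diamond y+t\ome(x,y)$. The $t^0$ terms hold because $(\g,\diamond)$ is Novikov, and the $t^2$ terms vanish in $\mathds{K}[t]/(t^2)$. So $\diamond_t$ is Novikov if and only if the $t^1$ coefficients vanish.

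From \eqref{l-Nov-2}, the $t^1$ coefficient gives
\[
\ome(x,y)\diamond z+\ome(x\diamond y,z)-\ome(x,z)\diamond y-\ome(x\diamond z,y)=0.
\]
Putting $x=x_1,y=x_2,z=x_3$, this is exactly $-\big(d^{(1,1)}_{(1,0,2)}\ome\big)(x_1;x_2,x_3)=0$ by Proposition~\ref{pro:formular 2-d}. From \eqref{l-Nov-1}, the $t^1$ coefficient gives
\[
\ome(x,y)\diamond z+\ome(x\diamond y,z)-x\diamond\ome(y,z)-\ome(x,y\diamond z)-(x\leftrightarrow y)=0.
\]
Matching term by term, this is $-\big(d^{(1,1)}_{(0,2,1)}\ome\big)(x,y;z)=0$. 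Since $d^2_\nov(\ome)$ is exactly the pair of these two components, the two vanishing conditions together say $d^2_\nov\ome=0$. This proves (i).

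For part (ii), I would expand $(\id+tN)(x\diamond_t y)=(\id+tN)(x)\diamond'_t(\id+tN)(y)$ modulo $t^2$. This is equivalent to
\[
\ome(x,y)-\ome'(x,y)=x\diamond N(y)+N(x)\diamond y-N(x\diamond y).
\]
I then need $d^1_\nov=d^{(1)}_{(1,1)}$ on $\Hom(\g,\g)$. Here $K_2=\{(1,1)\}$, so $d^1_\nov$ has only this one component. By Proposition~\ref{pro-pL-rcom}(i) with $n=1$, it coincides with the pre-Lie differential:
\[
d^1_\nov(N)(x;y)=x\diamond N(y)+N(x)\diamond y-N(x\diamond y).
\]
Hence the deformations are equivalent if and only if $\ome-\ome'=d^1_\nov N$ for some $N$, i.e.\ $[\ome]=[\ome']$ in $\huaH^2_\nov(\g;\g)$. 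For the converse, given $\ome-\ome'=d^1_\nov N$, I take $\id+tN$ as the homomorphism. No invertibility issue arises, since $\id+tN$ is automatically invertible over $\mathds{K}[t]/(t^2)$.

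The main care point is the sign and variable-ordering bookkeeping. One must confirm that the $t$-linear terms of \eqref{l-Nov-2} reproduce $d^{(1,1)}_{(1,0,2)}$ with its arguments read as $x_1;x_2,x_3$, and that those of \eqref{l-Nov-1} reproduce all eight terms of $d^{(1,1)}_{(0,2,1)}$. I would do this check explicitly. For the $n=1$ differential, Proposition~\ref{pro-pL-rcom}(i) already covers the case $n=1$, so I would just cite it.
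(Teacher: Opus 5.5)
Your proposal is correct and takes essentially the same route as the paper. For (i), you expand the two Novikov identities for $\diamond_t$ modulo $t^2$ and match the $t$-linear terms, with correct signs, to the components $d^{(1,1)}_{(1,0,2)}$ and $d^{(1,1)}_{(0,2,1)}$ from Proposition~\ref{pro:formular 2-d}; for (ii), you expand the homomorphism condition for $\mathrm{id}+tN$ to get $\omega-\omega'=d^1_{\mathrm{Nov}}(N)$, and citing Proposition~\ref{pro-pL-rcom}(i) with $n=1$ for the formula of $d^1_{\mathrm{Nov}}$ makes explicit what the paper uses without comment.
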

\begin{proof}
{\rm (i)} Applying $\diamond_t$ to \eqref{l-Nov-1} and \eqref{l-Nov-2}, for any $x,y,z\in\g$, we have
\begin{align*}
0&=(x\diamond_t y)\diamond_t z-x\diamond_t (y\diamond_t z)-(y\diamond_t x)\diamond_t z+y\diamond_t (x\diamond_t z)\\
&= \bigl((x\diamond y)\diamond z-x\diamond(y\diamond z)-(y\diamond x)\diamond z+y\diamond(x\diamond z)\bigr) \\
&\quad + t\Bigl(\omega(x\diamond y,z)+\omega(x,y)\diamond z-\omega(x,y\diamond z)-x\diamond\omega(y,z)\\
&\qquad\quad-\omega(y,x)\diamond z-\omega(y\diamond x,z)+y\diamond\omega(x,z)+\omega(y,x\diamond z)\Bigr) \\
&\quad + t^{2}\Bigl(\omega(\omega(x,y),z)-\omega(x,\omega(y,z))-\omega(\omega(y,x),z)+\omega(y,\omega(x,z))\Bigr),
\end{align*}
and
\begin{align*}
0&=(x\diamond_t y)\diamond_t z-(x\diamond_t z)\diamond_t y\\
&=\bigl((x\diamond y)\diamond z-(x\diamond z)\diamond y\bigr) + t\Bigl(\omega(x,y)\diamond z+\omega(x\diamond y,z)-\omega(x,z)\diamond y-\omega(x\diamond z,y)\Bigr) \\
&\quad + t^{2}\Bigl(\omega(\omega(x,y),z)-\omega(\omega(x,z),y)\Bigr).
\end{align*}
Since $(\g,\diamond)$ is a Novikov algebra and $t^2=0$ in $\mathds{K}[t]/(t^2)$, the operation $\diamond_t$ satisfies \eqref{l-Nov-1} and \eqref{l-Nov-2} if and only if
\begin{eqnarray}
	0&=&x\diamond\ome(y,z)+\ome(x,y\diamond z)-\ome(x,y)\diamond z-\ome(x\diamond y,z)\label{2-cocycle1}\\
	&&-y\diamond\ome(x,z)-\ome(y,x\diamond z)+\ome(y,x)\diamond z+\ome(y\diamond x,z),\nonumber\\
	0&=&\ome(x,z)\diamond y+\ome(x\diamond z,y)-\ome(x,y)\diamond z-\ome(x\diamond y,z).\label{2-cocycle2}
\end{eqnarray}
By Proposition \ref{pro:formular 2-d}, \eqref{2-cocycle1} and \eqref{2-cocycle2} hold if and only if $d_\nov^{2}(\ome)=0$, that is, $\ome$ is a $2$-cocycle of the Novikov algebra $(\g,\diamond)$.

\text{(ii)}
For any $x,y\in\g$, we have
\begin{align*}
&\quad(\mathrm{id}+tN)(x\diamond_t y)-(\mathrm{id}+tN)(x)\diamond_t'(\mathrm{id}+tN)(y)\\
&=x\diamond y-x\diamond y+t\Bigl(\omega(x,y)+N(x\diamond y)-x\diamond N(y)-N(x)\diamond y-\omega'(x,y)\Bigr)\\
&\quad+t^2\Bigl(N(\omega(x,y))-N(x)\diamond N(y)-\omega'(x,N(y))-\omega'(N(x),y)\Bigr)-t^3\omega'(N(x),N(y)).
\end{align*}
Since we are working over $\mathds{K}[t]/(t^2)$, all terms involving
$t^2$ or higher powers vanish.
Therefore, $\mathrm{id}+tN$ is a homomorphism of Novikov algebras if
and only if
\[
\omega(x,y)-\omega'(x,y)
=
N(x)\diamond y+x\diamond N(y)-N(x\diamond y)=d_\nov^1(N)(x,y),
\qquad \forall\,x,y\in\g.
\]
Hence, $\omega$ and $\omega'$ represent the same cohomology class in
$\huaH^2(\g;\g)$ if and only if the corresponding infinitesimal
deformations of $(\g,\diamond)$ are equivalent.
\end{proof}

\begin{defi}
Let $(\g,\diamond)$ be a Novikov algebra.  An infinitesimal deformation $(\mathds{K}[t]/(t^2)\ot\g, \diamond_t)$  generated by $\ome$ is called \textbf{trivial} if there exists $N\in\End(\g)$ such that
$\id+tN:(\g,\diamond_t)\to (\g,\diamond)$ is an isomorphism of Novikov algebras.
\end{defi}

\begin{defi}
	A Novikov algebra $(\g,\diamond)$ is called \textbf{rigid} if every
	infinitesimal deformation of $(\g,\diamond)$ is trivial.
\end{defi}

\begin{pro}
Let $(\g,\diamond)$ be a Novikov algebra. If $\huaH^2(\g;\g)=0$, then $(\g,\diamond)$ is rigid.
\end{pro}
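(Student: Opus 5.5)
The plan is to derive rigidity directly from Theorem \ref{deformation-2-cocycle}. Let $(\mathds{K}[t]/(t^2)\ot\g,\diamond_t)$ be any infinitesimal deformation of $(\g,\diamond)$, generated by $\ome\in\Hom(\g\ot\g,\g)$. By Theorem \ref{deformation-2-cocycle} (i), $\ome$ is a $2$-cocycle, so $d^2_\nov(\ome)=0$. Since $\huaH^2(\g;\g)=0$, the class of $\ome$ vanishes. Hence there is $N_0\in\Hom(\g,\g)=\frkC^1_\nov(\g)$ with $\ome=d^1_\nov(N_0)$. Recall from the proof of Theorem \ref{deformation-2-cocycle} (ii) that
\[
d^1_\nov(N)(x,y)=N(x)\diamond y+x\diamond N(y)-N(x\diamond y).
\]

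Next, regard the original product $\diamond$ itself as the trivial deformation generated by $\ome'=0$, which is trivially a $2$-cocycle. The classes of $\ome$ and $0$ agree in $\huaH^2(\g;\g)$. Theorem \ref{deformation-2-cocycle} (ii) therefore gives $N\in\End(\g)$ such that $\id+tN:(\g,\diamond_t)\to(\g,\diamond)$ is a homomorphism of Novikov algebras. Concretely, the proof of that theorem shows the homomorphism condition is $\ome-0=d^1_\nov(N)$, so one may take $N=N_0$.

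It remains to check that $\id+tN$ is an isomorphism and not merely a homomorphism, as the definition of a trivial deformation requires. Over $\mathds{K}[t]/(t^2)$ the map $\id-tN$ is a two-sided inverse, because $(\id+tN)(\id-tN)=\id-t^2N^2=\id$. A bijective homomorphism has a homomorphic inverse, so $\id+tN$ is an isomorphism and the deformation is trivial. Since the deformation was arbitrary, $(\g,\diamond)$ is rigid.

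The only delicate point is matching the sign and direction conventions: the map goes from $(\g,\diamond_t)$ to $(\g,\diamond)$, and the equivalence condition there reads $\ome-\ome'=d^1_\nov(N)$ with $\ome'=0$. If a sign came out reversed, replacing $N$ by $-N$ fixes it, so this is not a genuine obstacle.
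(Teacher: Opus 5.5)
Your proof is correct and follows the paper's argument. The paper likewise takes $N$ with $\ome=d^1_\nov(N)$ and checks $(\id+tN)(x\diamond_t y)=(\id+tN)(x)\diamond(\id+tN)(y)$ modulo $t^2$, then notes that $\id-tN$ is the inverse. The only difference is that it verifies this identity directly instead of citing Theorem~\ref{deformation-2-cocycle}~(ii) with $\ome'=0$, which is the same computation. Your closing worry about signs is unnecessary: with the paper's conventions the condition is exactly $\ome-0=d^1_\nov(N)$, so $N=N_0$ works as is.
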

\begin{proof}
	Let
$(\mathds{K}[t]/(t^2)\ot\g, \diamond_t)$ be an infinitesimal deformation of the Novikov algebra $(\g,\diamond)$ generated by $\ome$. Then by Theorem \ref{deformation-2-cocycle}, $\omega$ is a $2$-cocycle. If $\huaH^2(\g;\g)=0$, then there exists $N\in\End(\g)$ such that $\ome=d_\nov^1(N)$.
That is, for all $x,y\in\g$, we have
$$
	\ome(x,y)=N(x)\diamond y+x\diamond N(y)-N(x\diamond y).
$$
Hence, modulo $t^2$, we have
	\begin{align*}
		(\id+tN)(x\diamond_t y)
		&=(\id+tN)\bigl(x\diamond y+t\ome(x,y)\bigr)\\
		&=x\diamond y+t\bigl(\ome(x,y)+N(x\diamond y)\bigr)\\
		&=x\diamond y+t\bigl(N(x)\diamond y+x\diamond N(y)\bigr)\\
		&=(x+tN(x))\diamond(y+tN(y))\\
		&=(\id+tN)(x)\diamond(\id+tN)(y).
	\end{align*}
Therefore, $\id+tN:(\g,\diamond_t)\to(\g,\diamond)$ is a homomorphism of Novikov algebras. Moreover, its inverse is $\id-tN$ modulo $t^2$. Hence, $\id+tN$ is an isomorphism of Novikov algebras, that is, the infinitesimal deformation generated by $\omega$ is trivial. Therefore, $(\g,\diamond)$ is rigid.
\end{proof}

\begin{rmk}\label{rmk-Alussein}
In \cite{Alussein}, Alusssein showed that
   $\huaH^2(\g;\g)=0$ for the Novikov algebra $(\g=\mathds{K}[t],\diamond)$ given in Example \ref{ex-kt-p} (i) with $\lambda =0$;   while $\huaH^2(\g;\g)\neq 0$ for the Novikov algebra $(\mathds{K}[t]/(t^p),\diamond)$ given in Example \ref{ex-kt-p} (ii). Therefore, the Novikov algebra $(\g=\mathds{K}[t],\diamond)$ is rigid and  $(\mathds{K}[t]/(t^p),\diamond)$ is not.

   On the other hand, according to Example \ref{ex:diff}, we have
  $\dim\huaH^2_\pl(\mathds{K}[t];\mathds{K}[t])\geq 1$, which implies that the underlying pre-Lie algebra of the Novikov algebra $(\g=\mathds{K}[t],\diamond)$ given in Example \ref{ex-kt-p} (i) with $\lambda =0$ is not cohomologically rigid.
\end{rmk}

\section{Cohomology of Novikov algebras with coefficients in representations}\label{sec:rep}

In this section, we   use pseudo-tensor categories to investigate the cohomology theory of Novikov algebras with coefficients in arbitrary representations. We first recall some basic notions.
A pseudo-tensor category is a category equipped with ``polylinear map'' and a way to compose them.
\begin{defi}\cite{BDK,BKV,BD}
A \textbf{pseudo-tensor category} consists of
\begin{itemize}
  \item a class of objects $\frkM$;
  \item a collection of vector spaces $\lin_I^\frkM(\{L_i\}_{i \in I}, M)$ on which the symmetric group $S_I$ acts, where $I$ is a finite non-empty set, $\{L_i\}_{i\in I}$ is a family of objects and $M$ is an object; its elements are called {\bf $I$-operations};
  \item the composition map for any surjection of finite non-empty sets $J\overset{\pi}\twoheadrightarrow I$ and families of objects $\{L_i\}_{i\in I}$, $\{K_j\}_{j\in J}$ and $M$:
  \begin{eqnarray}\label{ps-cate}
  \qquad\lin_I^\frkM(\{L_i\}_{i \in I},M)\times\prod_{I}\lin_{J_i}^\frkM(\{K_j\}_{j \in {J_i}},L_i)\to\lin_J^\frkM(\{K_j\}_{j \in J},M),\quad(\phi,\{\psi_i\}_{i\in I})\mapsto\phi\cc(\{\psi_i\}_{i\in I}),
  \end{eqnarray}
  where $J_i=\pi^{-1}(i)\subset J$ for any $i\in I$,
\end{itemize}
such that the following properties hold:
\begin{itemize}
  \item \textbf{Associativity}: If $H\twoheadrightarrow J$ is another surjection between finite non-empty sets, $\{F_{h}\}_{h\in H}$ is a family of objects and $\varphi_j\in\lin_{H_j}^\frkM(\{F_h\}_{h\in {H_j}},K_j)$, then
      $$\phi\cc\big(\{\psi_i\cc(\{\varphi_j\}_{j\in J})\}_{i\in I}\big)=\big(\phi\cc(\{\psi_i\}_{i\in I})\big)\cc(\{\varphi_j\}_{j\in J})\in\lin_H^\frkM(\{F_h\}_{h\in H}, M).$$
  \item \textbf{Unit}: For any object $M$, there is an element $\id_M \in \lin_{[1]}^\frkM(\{M\}, M)$ such that for any  $I$-operation $\phi\in \lin_I^\frkM(\{L_i\}_{i \in I}, M)$, $$\id_M\cc \phi = \phi\cc(\{\id_{L_i}\}_{i \in I}) = \phi.$$
  \item \textbf{Equivariance}: The compositions \eqref{ps-cate} are equivariant with respect to the natural action of the symmetric group.
\end{itemize}
\end{defi}

In the sequel, we denote a pseudo-tensor category by its object class $\frkM$.

\begin{ex}
An operad is a pseudo-tensor category with only one object. Thus the notion of pseudo-tensor category is a generalization of operad.
\end{ex}

\begin{ex}\label{ps-cat-vec}
Denote by $\huaV ec$ the class of vector spaces. For a finite non-empty set $I$, a family of vector spaces $\{L_i\}_{i \in I}$ and a vector space $M$, we define $$
\lin_I^{\huaV ec}(\{L_i\}_{i \in I},M)=\Hom(\otimes_{i \in I} L_i, M).
$$
The symmetric group $S_I$ acts on the above vector space by permuting the factors in $\otimes_{i \in I} L_i$.
For any surjection of finite non-empty sets $J \overset{\pi}\twoheadrightarrow I$ and a family of objects $\{K_j\}_{j \in J}$, the composition map is defined by
\begin{eqnarray}
\Hom(\otimes_{i \in I}L_i,M)\otimes \prod_{I} \Hom(\otimes_{j \in J_i}{K_j},L_i) &\to& \Hom(\otimes_{j \in J}{K_j},M),\\
(f,\{g_i\}_{i \in I}) &\mapsto&f\cc(\{g_i\}_{i \in I}):=f (\otimes_{i \in I} g_i),\nonumber
\end{eqnarray}
where $J_i = \pi^{-1}(i)$ for any $i \in I$. Therefore, $\huaV ec$ is a pseudo-tensor category.
\end{ex}

\begin{ex}\label{operad-ot-pscategory}
	Let $\mathcal{O}$ be an operad and $\frkM$ a pseudo-tensor category.
Then $\mathcal{O}\otimes\frkM$ is a pseudo-tensor category
	whose objects are those of $\frkM$ and whose multilinear morphism spaces
	are defined as follows. For a finite non-empty set $I$, a family of objects
	$\{L_i\}_{i\in I}$, and an object $M$, let $n=|I|$ and set
	$$
	\lin_{I}^{\mathcal{O}\otimes\frkM}
	(\{L_i\}_{i\in I},M)
	:=
	\mathcal{O}(n)\otimes
	\lin_{I}^{\frkM}(\{L_i\}_{i\in I},M).
	$$
For a surjection $J\twoheadrightarrow I$, let
	$J_i:=\pi^{-1}(i)$ for each $i\in I$. For $f\in\mathcal{O}(n),
	f_i\in\mathcal{O}(|J_i|),
	g\in\lin_{I}^{\frkM}(\{L_i\}_{i\in I},M),$ and $g_i\in
	\lin_{J_i}^{\frkM}(\{K_j\}_{j\in J_i},L_i),
	$
	 the composition is given by
\begin{align*}
	\lin_{I}^{\mathcal{O}\ot \frkM}(\{L_i\}_{i\in I},M)\times \prod_{I}\lin_{J_i}^{\mathcal{O}\ot \frkM}(\{K_j\}_{j \in {J_i}},L_i) &\to \lin_J^{\mathcal{O}\ot \frkM}(\{K_j\}_{j \in J},M),\nonumber\\
	\big(f\ot g, \{f_i\ot g_i\}_{i\in I}\big) &\mapsto (f\ot g)\circ (\{f_i\ot g_i\}_{i\in I}) := \bigl(f\circ\{f_i\}_{i\in I}\bigr) \otimes \bigl(g\circ \{g_i\}_{i\in I}\bigr).
\end{align*}
\end{ex}

\begin{defi}\cite{BD}
 Let $\frkM$ and $\frkN$ be two pseudo-tensor categories. $\alpha : \frkN \to \frkM$ is called a \textbf{pseudo-tensor functor} if it sends any object $N \in \frkN$ to an object $\alpha(N) \in \frkM$, and assigns any finite non-empty set $I$ and objects $\{N_i\}_{i \in I}, N\in \frkN$ to a map $\alpha_I : \lin_I^{\frkN}(\{N_i\}_{i\in I}, N) \to \lin_I^{\frkM}(\{\alpha(N_i)\}_{i\in I}, \alpha(N))$ such that $\alpha_I$ are compatible with the composition map and $\alpha(\mathrm{id}_N) = \mathrm{id}_{\alpha(N)}$. One defines a morphism between pseudo-tensor functors in the obvious way.
\end{defi}

\begin{ex}
Let $\huaP$ and $\huaP'$ be two operads. Then any morphism of operads $\huaP\to\huaP'$ is a pseudo-tensor functor from $\huaP$ to $\huaP'$.
\end{ex}

The language of pseudo-tensor functors provides a natural way to define algebras over an operad in a pseudo-tensor category.

\begin{defi}\cite{BD}\label{operad-alg-in-pt-cat}	
Let $\huaP$ be an operad and $\frkM$ a pseudo-tensor category. An {\bf$\huaP$-algebra} in $\frkM$ is a pseudo-tensor functor $\huaP\to \frkM$.
\end{defi}

\begin{ex}\cite{BDK,BKV}
A \textbf{Lie algebra} in a pseudo-tensor category $\frkM$ consists of an object $\huaL\in\frkM$ and a map $\nu \in \lin^\frkM_{[2]}(\{\huaL,\huaL\},\huaL)$ satisfying:
\begin{itemize}
  \item {Skew-symmetry}: $\nu = -\nu^{(1\,2)}$;
  \item {Jacobi identity}: $\nu\cc(\nu, \id)+(\nu\cc(\id, \nu))^{(1\,2\,3)}+(\nu\cc(\id, \nu))^{(1\,3\,2)}=0.$
\end{itemize}
\end{ex}

\begin{defi}\cite{BDK,BKV}
A \textbf{representation of a Lie algebra} $(\huaL, \nu)$ in a pseudo-tensor category $\frkM$ is an object $M$ together with
$\rho \in \lin_{[2]}^\frkM(\{\huaL, M\}, M)$ satisfying
$$
\rho\cc(\nu,\id_M) = \rho\cc(\id_\huaL, \rho) - (\rho\cc(\id_\huaL,\rho))^{(1\,2)}.
$$
\end{defi}

Note that any  $\mathcal{P}$-algebra in the pseudo-tensor category $\mathcal{V}ec$ is just the ordinary $\mathcal{P}$-algebra. In particular, a Novikov algebra in the pseudo-tensor category $\mathcal{V}ec$ is equivalent to the one given by Definition \ref{def-l-r-Nov}. A \textbf{representation} $(M;l,r)$ of a Novikov algebra $(\g, \omega)$ in the pseudo-tensor category $\mathcal{V}ec$ is the same as the one given in \cite{O1995}. More precisely, $M\in \mathcal{V}ec$ and $l,r\in \Hom(\g\ot M,M)$ satisfy
\begin{align}
	l\cc(\omega,\id_M)-(l\cc(\omega,\id_M))^{(1\,2)}&=l\cc(\id_\g, l) - (l\cc(\id_\g, l))^{(1\,2)},\label{Nov-rep-1}\\
	l\cc(\id_\g, r) - (r\cc(\id_\g, l))^{(1\,2)}&=r\cc(\omega,\id_M)-(r\cc(\id_\g, r))^{(1\,2)},\label{Nov-rep-2}\\
	l\cc(\omega,\id_M)&=(r\cc(\id_\g, l))^{(1\,2)},\label{Nov-rep-3}\\
	r\cc(\id_\g, r)&=(r\cc(\id_\g, r))^{(1\,2)}.\label{Nov-rep-4}
\end{align}

One can define an analogue of Chevalley-Eilenberg cohomology within the framework of pseudo-tensor categories.

\begin{defi}\cite{BKV}\label{CE-comp-wrt-mod}
Let $\frkM$ be a pseudo-tensor category, $(\huaL,\nu)$ a Lie algebra in $\frkM$ and $(M;\rho)$ a representation of $(\huaL,\nu)$.
The space of {\bf $n$-cochains} in $\frkM$ of $(\huaL,\nu)$ with coefficients in $(M;\rho)$ is $$\ce_\frkM^n(\huaL;M)=\{f\in\lin_{[n]}^\frkM(\{\huaL,\cdots,\huaL\},M)\mid f^\sigma=(-1)^\sigma f,  \forall \sigma\in \s_n\}.$$
The coboundary map is defined as follows:
\begin{eqnarray}\label{dce-coeff}
 	&&\hat{d}_{\rm CE}^n(f)=\underset{\sigma\in\Sh^{-1}(1,n)}{\sum}(-1)^{\sigma}\big(\rho\cc(\id_\huaL,f)\big)^{\sigma}-\underset{\sigma\in\Sh^{-1}(2,n-1)}{\sum}(-1)^{\sigma}\big(f\cc(\nu,\id_\huaL,\cdots,\id_\huaL)\big)^{\sigma}.
 \end{eqnarray}
\end{defi}

It is obvious that $(\ce^\bullet_{\huaV ec}(\huaL;M),\hat{d}_\ce)$ is the Chevalley-Eilenberg cochain complex of the Lie algebra $(\huaL,\nu)$ with coefficients in $(M;\rho)$, where $\huaV ec$ is the pseudo-tensor category shown in Example \ref{ps-cat-vec}.

Next we focus on the pseudo-tensor category $rNov\ot \mathcal{V}ec$. Let $(\g,\omega)$ be a Novikov algebra in the pseudo-tensor category $\mathcal{V}ec$, that is, there is a pseudo-tensor functor $\beta:Nov\to \mathcal{V}ec$. Then there is a pseudo-tensor functor $\alpha:Lie\to rNov\ot\mathcal{V}ec$ as follows:
\begin{align*}
\alpha:	Lie\overset{{\rm Th.}~ \ref{Lie-pp}}{\to} rNov\underset{\rm H}{\ot} Nov\overset{\id\ot\beta}{\to} rNov\ot\mathcal{V}ec.
\end{align*}
More precisely, $\alpha$ is given by
\begin{align}\label{def-coeff-nu}
\alpha(\nu)=\mu\cc(D,\id)\ot \omega-\mu\cc(\id, D)\ot\omega^{(1\,2)}\in rNov(2)\otimes \Hom(\g\ot\g,\g),
\end{align}
where $\id$ denotes  $\id_{rNov}$.
Equivalently, $(\g,\alpha(\nu))$ is a Lie algebra in the pseudo-tensor category $rNov\otimes\mathcal{V}ec$.
\begin{lem}\label{Nov-to-Lie}
With the above notations. Let $M$ a vector space and $l,r\in \Hom(\g\ot M, M)$. Define a map $\rho\in\Hom_{[2]}^{rNov\ot\mathcal{V}ec}(\{\g,M\},M)=rNov(2)\ot\Hom(\g\ot M,M)$  by
\begin{align}\label{def-rho}
	\rho=\mu\cc(D,\id)\ot l-\mu\cc(\id,D)\ot r.
\end{align}
Then $(M;\rho)$ is a representation of the Lie algebra $(\g,\alpha(\nu))$ in the pseudo-tensor category $rNov\ot\mathcal{V}ec$ if and only if $(M;l,r)$ is a representation of the Novikov algebra $(\g,\omega)$.
\end{lem}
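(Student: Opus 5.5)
The plan is to expand the representation axiom
\[
\rho\cc(\alpha(\nu),\id_M)=\rho\cc(\id_\g,\rho)-(\rho\cc(\id_\g,\rho))^{(1\,2)}
\]
in $rNov(3)\ot\Hom(\g\ot\g\ot M,M)$. We will use the basis \eqref{eq:basis-rNov} of $rNov(3)$, indexed by $P_3$:
\[
(2,0,0),\ (0,2,0),\ (0,0,2),\ (1,1,0),\ (1,0,1),\ (0,1,1).
\]
Since these six elements are linearly independent, the axiom holds if and only if the six coefficients in $\Hom(\g\ot\g\ot M,M)$ vanish. Each coefficient will turn out to be one of \eqref{Nov-rep-1}--\eqref{Nov-rep-4}, possibly after a transposition, or a consequence of them.

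First we compute the compositions in $rNov$ exactly as in the proof of Proposition \ref{pro:formular 2-d}, using the Leibniz rule $D\circ\mu=\mu\circ(D,\id)+\mu\circ(\id,D)$ and associativity. On the left-hand side this gives:
\begin{align*}
(\mu\cc(D,\id))\cc(\mu\cc(D,\id),\id)&=\mu_3\cc(D^2,D^0,D^0)+\mu_3\cc(D^1,D^1,D^0),\\
(\mu\cc(D,\id))\cc(\mu\cc(\id,D),\id)&=\mu_3\cc(D^1,D^1,D^0)+\mu_3\cc(D^0,D^2,D^0),\\
(\mu\cc(\id,D))\cc(\mu\cc(D,\id),\id)&=\mu_3\cc(D^1,D^0,D^1),\\
(\mu\cc(\id,D))\cc(\mu\cc(\id,D),\id)&=\mu_3\cc(D^0,D^1,D^1).
\end{align*}
On the right-hand side, $\rho\cc(\id,\rho)$ involves $\mu\cc(D,\id)\cc(\id,\mu\cc(D,\id))=\mu_3\cc(D^1,D^1,D^0)$, the analogous term $\mu_3\cc(D^1,D^0,D^1)$ for $r$ inside, and
\[
\mu\cc(\id,D)\cc(\id,\mu\cc(\cdot))
\]
which produces two terms each. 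We then apply $(1\,2)$ via \eqref{action-on-rNov}, which swaps $p_1$ and $p_2$.

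Next we collect coefficients, with composites written as $l\cc(\omega,\id_M)$, $r\cc(\id_\g,l)$, and so on.
\begin{itemize}
\item The coefficient of $\mu_3\cc(D^1,D^1,D^0)$ gives \eqref{Nov-rep-1}. Here $\omega$ and $\omega^{(1\,2)}$ combine into $l\cc(\omega,\id)-(l\cc(\omega,\id))^{(1\,2)}$, while $l\cc(\id,l)$ and its transpose come from the right-hand side.
\item The coefficients of $\mu_3\cc(D^1,D^0,D^1)$ and $\mu_3\cc(D^0,D^1,D^1)$ are transposes of each other, and each gives \eqref{Nov-rep-2}.
\item The coefficients of $(2,0,0)$ and $(0,2,0)$ appear only from $l\cc(\omega,\id)$ on the left and $r\cc(\id,l)$ transposed on the right. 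They give \eqref{Nov-rep-3} and its $(1\,2)$-transpose.
\item The coefficient of $(0,0,2)$ arises only from $r\cc(\id,r)$ terms and gives \eqref{Nov-rep-4}.
\end{itemize}

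Once these identifications are made, both directions of the equivalence are immediate: the representation axiom holds if and only if all six coefficients vanish, if and only if \eqref{Nov-rep-1}--\eqref{Nov-rep-4} hold. The transposed copies are equivalent to the originals because $\sigma\mapsto f^\sigma$ is invertible.

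The main obstacle is the bookkeeping in the coefficient collection. We have to track signs and the $\s_3$-actions correctly, including the convention $f^\sigma(x_1,\dots)=f(x_{\sigma^{-1}(1)},\dots)$ on $\Hom(\g\ot\g\ot M,M)$. In particular, the left-hand side splits the $(1,1,0)$ and $(0,2,0)$ contributions from $\omega^{(1\,2)}$, and these must match the paper's sign conventions exactly so that the resulting identities agree with \eqref{Nov-rep-2} and \eqref{Nov-rep-3} as written, rather than with equivalent rearrangements. I would first verify the $(2,0,0)$ and $(0,0,2)$ coefficients, since they are single-term, to calibrate signs, and then do the mixed ones.
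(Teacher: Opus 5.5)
Your proposal is correct and follows the paper's proof. Like the paper, you expand $\rho\cc(\alpha(\nu),\id_M)-\rho\cc(\id_\g,\rho)+(\rho\cc(\id_\g,\rho))^{(1\,2)}$ in the basis $\mu_3\cc(D^{p_1},D^{p_2},D^{p_3})$, $(p_1,p_2,p_3)\in P_3$, of $rNov(3)$, and read off \eqref{Nov-rep-1}--\eqref{Nov-rep-4} from the coefficients; the paper compares only the four coefficients not related by transposition.

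There is one small slip in your bookkeeping. The $(0,2,0)$ coefficient is $-l\cc(\omega^{(1\,2)},\id_M)+r\cc(\id_\g,l)$, with $r\cc(\id_\g,l)$ untransposed. It is exactly minus the $(1\,2)$-transpose of the $(2,0,0)$ coefficient $l\cc(\omega,\id_M)-(r\cc(\id_\g,l))^{(1\,2)}$, so your conclusion is unaffected.
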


\begin{proof}
 By \eqref{def-rho}, in the pseudo-tensor category $rNov\ot\mathcal{V}ec$, we have
	\begin{align*}
	&\rho\cc(\alpha(\nu),\id\ot\id_M)-\rho\cc(\id\ot\id_\g, \rho)+(\rho\cc(\id\ot\id_\g,\rho))^{(1\,2)}\\
	&=(\mu\cc(D,\id)\ot l-\mu\cc(\id,D)\ot r)\cc(\mu\cc(D,\id)\ot \omega-\mu\cc(\id, D)\ot\omega^{(1\,2)},\id\ot\id_M)\\
	&\quad-(\mu\cc(D,\id)\ot l-\mu\cc(\id,D)\ot r)\cc(\id\ot\id_\g,\mu\cc(D,\id)\ot l-\mu\cc(\id,D)\ot r)\\
	&\quad+\bigl((\mu\cc(D,\id)\ot l-\mu\cc(\id,D)\ot r)\cc(\id\ot\id_\g,\mu\cc(D,\id)\ot l-\mu\cc(\id,D)\ot r)\bigr)^{(1\,2)}\\
	&=\mu_3\cc((D^2,\id)+(D,D),\id)\ot l\cc(\omega,\id_M)-\mu_3\cc(D,\id,D)\ot r\cc(\omega,\id_M)\\
	&\quad-\mu_3\cc((D,D)+(\id,D^2),\id)\ot l\cc(\omega^{(1\,2)},\id_M)+\mu_3\cc(\id,D,D)\ot r\cc(\omega^{(1\,2)},\id_M)\\
	&\quad-\mu_3\cc(D,D,\id)\ot l\cc(\id_\g,l)+\mu_3\cc(\id,(D,D)+(D^2,\id))\ot r\cc(\id_\g,l)\\
	&\quad+\mu_3\cc(D,\id,D)\ot l\cc(\id_\g,r)-\mu_3\cc(\id,(D,D)+(\id,D^2))\ot r\cc(\id_\g,r) \\
	&\quad+\mu_3\cc(D,D,\id)\ot (l\cc(\id_\g,l))^{(1\,2)}\\
	&\quad-\mu_3\cc(D,\id,D)\ot (r\cc(\id_\g,l))^{(1\,2)}-\mu_3\cc(D^2,\id,\id)\ot (r\cc(\id_\g,l))^{(1\,2)}\\
	&\quad-\mu_3\cc(\id,D,D)\ot (l\cc(\id_\g,r))^{(1\,2)}\\
	&\quad+\mu_3\cc(D,\id,D)\ot (r\cc(\id_\g,r))^{(1\,2)}+\mu_3\cc(\id,\id,D^2)\ot (r\cc(\id_\g,r))^{(1\,2)}\\
	&=\mu_3\cc(D^2,\id,\id)\ot\bigl(l\cc(\omega,\id_M)-(r\cc(\id_\g,l))^{(1\,2)}\bigr)\\
	&\quad+\mu_3\cc(\id,D^2,\id)\ot\bigl(-l\cc(\omega^{(1\,2)},\id_M)+ r\cc(\id_\g,l)\bigr)\\
	&\quad+\mu_3\cc(\id,\id,D^2)\ot\bigl(-r\cc(\id_\g,r)+(r\cc(\id_\g,r))^{(1\,2)}\bigr)\\
	&\quad+\mu_3\cc(D,D,\id)\ot\bigl(l\cc(\omega,\id_M)-l\cc(\omega^{(1\,2)},\id_M)-l\cc(\id_\g,l)+(l\cc(\id_\g,l))^{(1\,2)}\bigr)\\
	&\quad+\mu_3\cc(D,\id,D)\ot\bigl(-r\cc(\omega,\id_M)+l\cc(\id_\g,r)-(r\cc(\id_\g,l))^{(1\,2)}+(r\cc(\id_\g,r))^{(1\,2)}\bigr)\\
	&\quad+\mu_3\cc(\id,D,D)\ot \bigl(r\cc(\omega^{(1\,2)},\id_M)+r\cc(\id_\g,l)-r\cc(\id_\g,r)-(l\cc(\id_\g,r))^{(1\,2)}\bigr).
\end{align*}
On the one hand, let $(M;l,r)$ be a representation of the Novikov algebra $(\g,\omega)$. Then applying \eqref{Nov-rep-1}-\eqref{Nov-rep-4} to the above equality, we obtain
\begin{align}
\rho\cc(\alpha(\nu),\id\ot\id_M)-\rho\cc(\id\ot\id_\g, \rho)+(\rho\cc(\id\ot\id_\g,\rho))^{(1\,2)}=0,\label{e1}
\end{align}
which implies that $(M;\rho)$ is a representation of the Lie algebra $(\g,\alpha(\nu))$ in the pseudo-tensor category $rNov\ot\mathcal{V}ec$.

On the other hand, let $(M;\rho)$ be a representation of the Lie algebra $(\g,\alpha(\nu))$ in the pseudo-tensor category $rNov\ot\mathcal{V}ec$. Then \eqref{e1} holds. Since the terms
$$
\mu_3\cc(D^2,\id,\id), \;\mu_3\cc(\id,\id,D^2),\;\mu_3\cc(D,D,\id),\;\mu_3\cc(D,\id,D)
$$
are linearly independent in $rNov(3)$, comparing their coefficients in the above expansion yields respectively \eqref{Nov-rep-3}, \eqref{Nov-rep-4}, \eqref{Nov-rep-1}, and \eqref{Nov-rep-2}. Hence $(M;l,r)$ is a representation of the Novikov algebra $(\g,\omega)$.
\end{proof}

Let $(\g,\omega)$ be a Novikov algebra and $(M;l,r)$ a representation of $(\g,\omega)$. By Lemma \ref{Nov-to-Lie}, we obtain a Lie algebra $(\g,\alpha(\nu))$ in $rNov\ot\mathcal{V}ec$ and a representation $(M;\rho)$ of $(\g,\alpha(\nu))$ in $rNov\ot\mathcal{V}ec$. By Definition \ref{CE-comp-wrt-mod}, $(\ce_{rNov\ot\mathcal{V}ec}^\bullet(\g;M),\hat{d}_\ce)$ is a cochain complex, which will be used to describe the cochain complex of the Novikov algebra $(\g,\omega)$ with coefficients in $(M;l,r)$.

Define the space of $n$-cochains of the Novikov algebra $(\g,\omega)$ with coefficients in $(M;l,r)$ by
\begin{align}\label{eq:Nov-n-cochain-coeff}
\frkC_{\nov}^n(\mathfrak{g};M)=\underset{(k_{n-1},\cdots,k_0)\in K_n}\bigoplus\Hom(\wedge^{k_{n-1}}\g\ot\cdots\ot\wedge^{k_0}\g,M),
\end{align}
where $K_n$ is defined by \eqref{K-n}.

\begin{pro}\label{Lie-Nov-1-1}
	For $n\geq 1$, $\ce_{rNov\ot \mathcal{V}ec}^n(\g;M)$ and $\frkC_{\nov}^n(\mathfrak{g};M)$ are isomorphic.
\end{pro}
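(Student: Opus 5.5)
The plan is to mimic the two-step isomorphism $\Phi_n\Psi_n$ already used for the adjoint case, now with $\End_\g(n)$ replaced by $\Hom(\g^{\ot n},M)$. By Example~\ref{operad-ot-pscategory} and Example~\ref{ps-cat-vec},
\[
\lin_{[n]}^{rNov\ot\mathcal{V}ec}(\{\g,\cdots,\g\},M)=rNov(n)\ot\Hom(\g^{\ot n},M).
\]
The $\s_n$-action is diagonal, with \eqref{action-on-rNov} on the first factor and permutation of arguments on the second factor. Using the basis \eqref{eq:basis-rNov}, every element can be written uniquely as
\[
F=\sum_{(p_1,\cdots,p_n)\in P_n}\mu_n\circ(D^{p_1},\cdots,D^{p_n})\ot f^{(p_1,\cdots,p_n)}, \qquad f^{(p_1,\cdots,p_n)}\in\Hom(\g^{\ot n},M).
\]

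First, we repeat verbatim the coefficient comparison that led to \eqref{def-of-scrCn-action}; it only used that $P_n$ is $\s_n$-stable and that the basis is permuted by $\sigma$. This shows that $F^\sigma=(-1)^\sigma F$ for all $\sigma$ if and only if $(f^{(p_1,\cdots,p_n)})^\sigma=(-1)^\sigma f^{(p_{\sigma(1)},\cdots,p_{\sigma(n)})}$ for all $\sigma$. We then define $\scrC^n(\g;M)$ as the space of families satisfying this condition. Exactly as in Lemma~\ref{Psi}, the coefficient-extraction map $\Psi_n\colon \ce^n_{rNov\ot\mathcal{V}ec}(\g;M)\to\scrC^n(\g;M)$ is a linear isomorphism, with inverse given by reassembling the sum.

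Second, we restrict to ordered indices, $\Phi_n\colon\{f^{(p)}\}_{p\in P_n}\mapsto\{f^{(p)}\}_{p\in P_n^{\ord}}$. For $p\in P_n^{\ord}$ with $\phi_n(p)=(k_{n-1},\cdots,k_0)$, the permutations $\sigma_{n-1}\times\cdots\times\sigma_0$ fix $p$ by \eqref{map-Kn-to-Pn-ord}. The symmetry condition then gives $(f^{(p)})^{\sigma_{n-1}\times\cdots\times\sigma_0}=(-1)^{\sigma_{n-1}\times\cdots\times\sigma_0}f^{(p)}$. The analogue of Lemma~\ref{f-act-direct-product} for $M$-valued maps is immediate, so $f^{(p)}\in\Hom(\wedge^{k_{n-1}}\g\ot\cdots\ot\wedge^{k_0}\g,M)$. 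Combined with the bijection $\phi_n$ of Lemma~\ref{P-K}, this shows that $\Phi_n$ lands in $\frkC^n_\nov(\g;M)$.

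For the inverse, given $f_{(k_{n-1},\cdots,k_0)}$ we set $f^{(p_{\tau(1)},\cdots,p_{\tau(n)})}:=(-1)^\tau f_{(k_{n-1},\cdots,k_0)}^\tau$ for $\tau\in\Sh^{-1}(k_{n-1},\cdots,k_0)$. By Lemma~\ref{lem-PP}, this defines $f^{(q)}$ for every $q\in P_n$. The verification is the one in Proposition~\ref{psi}.

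The only point needing care is the main obstacle: this extension must be well defined. A given $q\in P_n$ must arise from a unique $\tau$, or at least the value must be independent of the choice. Uniqueness follows from Lemma~\ref{decomp-Sh}(ii): if $p^{\tau}=p^{\tau'}$, then $\tau'\tau^{-1}$ stabilizes $p$, so it lies in $\s_{k_{n-1}}\times\cdots\times\s_{k_0}$. The uniqueness of the decomposition then forces $\tau=\tau'$. Consistency with the sign rule under an arbitrary $\sigma$ follows by decomposing $\tau\sigma=(\sigma_{n-1}\times\cdots\times\sigma_0)\theta$ and using the alternating property of $f_{(k)}$ in each block. The two constructions are mutually inverse, so the composite $\Phi_n\Psi_n$ is the desired isomorphism $\ce^n_{rNov\ot\mathcal{V}ec}(\g;M)\cong\frkC^n_\nov(\g;M)$.
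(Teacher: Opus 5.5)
Your proposal is correct and follows essentially the same route as the paper. The paper transports Lemma~\ref{Psi} and Proposition~\ref{psi} to $\Hom(\g^{\ot n},M)$-valued coefficients and takes $\Gamma_n$ to be the composite of coefficient extraction and restriction to $P_n^{\ord}$, with inverse $f_{(k_{n-1},\cdots,k_0)}\mapsto\sum_{\sigma\in\Sh^{-1}(k_{n-1},\cdots,k_0)}(-1)^\sigma\bigl(\mu_n\circ(D^{p_1},\cdots,D^{p_n})\ot f_{(k_{n-1},\cdots,k_0)}\bigr)^\sigma$, which is exactly your reassembled family; your explicit check that each $q\in P_n$ arises from a unique $\tau\in\Sh^{-1}(k_{n-1},\cdots,k_0)$, via the stabilizer and Lemma~\ref{decomp-Sh}(ii), is correct and makes explicit a well-definedness point the paper leaves implicit.
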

\begin{proof}
	The argument is entirely parallel to that of Lemma \ref{Psi} and Proposition \ref{psi} in Section~2; we therefore only specify the isomorphism. 	
 For an arbitrary element
	\[
	F=\sum_{(p_1,\dots,p_n)\in P_n}\mu_n\circ(D^{p_1},\dots,D^{p_n})\otimes f^{(p_1,\dots,p_n)}\in \ce_{rNov\ot \mathcal{V}ec}^n(\g;M),
	\]
define a linear map $\Gamma_n:\ce_{rNov\ot \mathcal{V}ec}^n(\g;M)\to \frkC_{\nov}^n(\mathfrak{g};M)$ by
\begin{align}
	\Gamma_n(F)=\bigl\{f^{(p_1,\dots,p_n)}\bigr\}_{(p_1,\dots,p_n)\in P_n^{\ord}}.
\end{align}

Conversely, the inverse map ${\Gamma_n}^{-1}:\frkC_{\nov}^n(\mathfrak{g};M)\to \ce_{rNov\ot \mathcal{V}ec}^n(\g;M)$ is constructed as follows.  Given any
$$
f_{(k_{n-1},\dots,k_0)}\in\Hom(\wedge^{k_{n-1}}\g\ot\cdots\ot\wedge^{k_0}\g,M)\subset \frkC_{\nov}^n(\mathfrak{g};M),
$$
let $(p_1,\dots,p_n)=\phi_n^{-1}(k_{n-1},\dots,k_0)$, where $\phi_n^{-1}$ is given by Lemma \ref{P-K}.  Then
	\[
{\Gamma_n}^{-1}\bigl(f_{(k_{n-1},\dots,k_0)}\bigr)
	= \sum_{\sigma\in\Sh^{-1}(k_{n-1},\dots,k_0)} (-1)^\sigma \bigl(\mu_n\circ(D^{p_1},\dots,D^{p_n})\otimes f_{(k_{n-1},\dots,k_0)}\bigr)^\sigma .
	\]
	A direct verification shows that these two maps are inverses of each other, which completes the proof.
\end{proof}

\begin{thm}\label{cochain-Nov-coeff}
Let $(\g,\diamond)$ be a Novikov algebra and $(M;l,r)$ be its representation. Then $(\frkC_{\nov}^\bullet(\mathfrak{g};M),\hat{d}_{\nov})$ is a cochain complex,
where the space of $n$-cochains $\frkC_{\nov}^n(\g;M)$ is given by \eqref{eq:Nov-n-cochain-coeff}, and the coboundary operator $\hat{d}_\nov^n$ is given by
$$\hat{d}^n_\nov=\Gamma_{n+1}\circ \hat{d}^n_\ce\circ {\Gamma_n}^{-1},$$
which can be described as follows
$$
	\xymatrix@C=4em{
		\frkC_{\nov}^n(\mathfrak{g};M)
		\ar[r]^-{{\Gamma_{n}}^{-1}}
		\ar@{-->}[d]_{\hat{d}^n_{\nov}}
		&
		\ce^n_{rNov\ot\mathcal{V}ec}(\g;M)
		\ar[d]_{\hat{d}_\ce^n}
		\\
		\frkC_{\nov}^{n+1}(\mathfrak{g};M)
		&
		\ce^{n+1}_{rNov\ot\mathcal{V}ec}(\g;M)
		\ar[l]_-{\Gamma_{n+1}}
	}
$$
\end{thm}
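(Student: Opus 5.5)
The plan is to transport the Chevalley--Eilenberg complex along the isomorphisms $\Gamma_n$, exactly as in the proof of Theorem \ref{cochain-Nov}. There are three steps.

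First, we need $(\ce^\bullet_{rNov\ot\mathcal{V}ec}(\g;M),\hat d_\ce)$ to be a cochain complex. By Lemma \ref{Nov-to-Lie}, $(\g,\alpha(\nu))$ is a Lie algebra in the pseudo-tensor category $rNov\ot\mathcal{V}ec$, and $(M;\rho)$ is a representation of it. Definition \ref{CE-comp-wrt-mod}, following \cite{BKV}, then shows that $\hat d_\ce$ is well defined, i.e. it sends alternating cochains to alternating cochains, and that $\hat d_\ce^{n+1}\hat d_\ce^n=0$. If one wants to check this directly, the skew-symmetrization over shuffles in \eqref{dce-coeff} gives alternation. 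The vanishing of the square splits into three groups of terms:
\begin{itemize}
\item terms quadratic in $\rho$, which cancel by the representation identity;
\item terms quadratic in $\nu$, which cancel by the Jacobi identity;
\item mixed terms, which cancel pairwise by the associativity and equivariance axioms of the pseudo-tensor category.
\end{itemize}
This is the same argument as for ordinary Lie algebras, because only the operadic axioms are used.

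Second, Proposition \ref{Lie-Nov-1-1} gives linear isomorphisms $\Gamma_n:\ce^n_{rNov\ot\mathcal{V}ec}(\g;M)\to\frkC^n_\nov(\g;M)$ for all $n\geq1$. Defining $\hat d^n_\nov:=\Gamma_{n+1}\circ\hat d^n_\ce\circ\Gamma_n^{-1}$ therefore gives a well-defined linear map $\frkC^n_\nov(\g;M)\to\frkC^{n+1}_\nov(\g;M)$.

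Third, we compute
\[
\hat d^{n+1}_\nov\hat d^n_\nov=\Gamma_{n+2}\circ\hat d^{n+1}_\ce\circ\Gamma_{n+1}^{-1}\circ\Gamma_{n+1}\circ\hat d^n_\ce\circ\Gamma_n^{-1}=\Gamma_{n+2}\circ(\hat d^{n+1}_\ce\hat d^n_\ce)\circ\Gamma_n^{-1}=0,
\]
so $(\frkC^\bullet_\nov(\g;M),\hat d_\nov)$ is a cochain complex, and the diagram in the statement commutes by definition.

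The only point needing care is the first step: the cited result of \cite{BKV} must genuinely apply in $rNov\ot\mathcal{V}ec$. That category is a pseudo-tensor category by Example \ref{operad-ot-pscategory}, and $\alpha$ is a pseudo-tensor functor as the composite of the Ginzburg--Kapranov morphism of Theorem \ref{Lie-pp} with $\id\ot\beta$.

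I would also remark that for $M=\g$ with $l=\omega$ and $r=\omega^{(1\,2)}$, one has $\rho=\alpha(\nu)$. In that case $\hat d_\nov$ reduces to $d_\nov$ of Theorem \ref{cochain-Nov}, which serves as a consistency check.
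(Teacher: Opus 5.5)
Your proposal is correct and follows the paper's own route: Lemma \ref{Nov-to-Lie} together with the Chevalley--Eilenberg construction of \cite{BKV} in the pseudo-tensor category $rNov\ot\mathcal{V}ec$ gives $\hat d_\ce^{n+1}\hat d_\ce^n=0$, and the isomorphisms $\Gamma_n$ of Proposition \ref{Lie-Nov-1-1} transport this to $\frkC^\bullet_\nov(\g;M)$ exactly as in the proof of Theorem \ref{cochain-Nov}. Your consistency check is also right: with $l=\omega$ and $r=\omega^{(1\,2)}$ one gets $\rho=\alpha(\nu)$, so $\hat d_\nov$ reduces to $d_\nov$.
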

\begin{proof}
Base on Lemma \ref{Nov-to-Lie}, the proof is entirely parallel to that of Theorem \ref{cochain-Nov} in Section \ref{sec:nov}, we shall not repeat it here.
\end{proof}

Using the same notations as in Section \ref{sec:nov}, we have the following explicit formulas for $\hat{d}_{\nov}^2$.
\begin{pro}\label{ex:coeff-2-d}
	For any $ f\in \Hom(\g\ot\g,M)$, we have $\hat{d}_{\nov}^2( f)= \{\hat{d}_{(1,0,2)}^{(1,1)}+d_{(0,2,1)}^{(1,1)}\}( f)$. For all $x_1,x_2,x_3\in \g$, $\hat{d}_{(1,0,2)}^{(1,1)}( f)$ and $\hat{d}_{(0,2,1)}^{(1,1)}( f)$ are given explicitly by
	\begin{eqnarray*}
		\big( \hat{d}_{(1,0,2)}^{(1,1)}( f)\big)(x_1;x_2,x_3)
		&=&r(x_2\ot f(x_1,x_3))-r(x_3\ot f(x_1,x_2))-f(x_1\diamond x_2,x_3)+f(x_1\diamond x_3,x_2),
	\end{eqnarray*}
	and
	\begin{eqnarray*}
		\big(\hat{d}_{(0,2,1)}^{(1,1)}( f)\big)(x_1,x_2;x_3)
		&=&l(x_1\ot f(x_2,x_3))-l(x_2\ot f(x_1,x_3))-r(x_3\ot f(x_1,x_2))+r(x_3\ot f(x_2,x_1))\\
		&&- f(x_1\diamond x_2,x_3)+ f(x_2\diamond x_1,x_3)- f(x_2,x_1\diamond x_3)+ f(x_1,x_2\diamond x_3).
	\end{eqnarray*}
\end{pro}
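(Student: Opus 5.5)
The plan is to repeat the computation in the proof of Proposition~\ref{pro:formular 2-d}, with the pseudo-tensor category $rNov\otimes\mathcal{V}ec$ in place of the operad $rNov\underset{\rm H}{\ot}\End_\g$. Let $f\in\Hom(\g\ot\g,M)$. Since $K_2=\{(1,1)\}$ and $P_2=\{(1,0),(0,1)\}$, Proposition~\ref{Lie-Nov-1-1} gives
\[
\Gamma_2^{-1}(f)=\mu_2\circ(D,\id)\ot f-\mu_2\circ(\id,D)\ot f^{(1\,2)}.
\]
We then apply $\hat d^2_{\rm CE}$ from \eqref{dce-coeff}. It has three terms of the form $\big(\rho\cc(\id,\Gamma_2^{-1}f)\big)^{\sigma}$ with $\sigma\in\{\id,(1\,2),(1\,2\,3)\}$, and three terms of the form $\big(\Gamma_2^{-1}f\cc(\alpha(\nu),\id)\big)^{\sigma}$ with $\sigma\in\{\id,(2\,3),(1\,3\,2)\}$. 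Here $\rho$ is given by \eqref{def-rho} and $\alpha(\nu)$ by \eqref{def-coeff-nu}.

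Structurally this is the same 24-term expansion as before. The only change is in the first three groups of terms: each occurrence of $\omega\circ(\id,\cdot)$ becomes $l\circ(\id_\g,\cdot)$, and each occurrence of $\omega^{(1\,2)}\circ(\id,\cdot)$ becomes $r\circ(\id_\g,\cdot)$. This follows because $\rho$ has exactly the shape of $\alpha(\nu)$, with $(\omega,\omega^{(1\,2)})$ replaced by $(l,r)$. The $rNov$-components are composed exactly as in the earlier proof, using the Leibniz rule for $D$, associativity and commutativity of $\mu$, and \eqref{action-on-rNov}. Therefore every $rNov(3)$ coefficient that appeared in the proof of Proposition~\ref{pro:formular 2-d} reappears here unchanged.

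Applying $\Gamma_3$ keeps only the coefficients of the ordered basis elements, since $P_3^{\ord}=\{(2,0,0),(1,1,0)\}$. From the earlier list we read off two formulas:
\[
\hat d^{(1,1)}_{(1,0,2)}f=\bigl(r\circ(\id,f)\bigr)^{(1\,2)}-\bigl(r\circ(\id,f)\bigr)^{(1\,2\,3)}-f\circ(\omega,\id)+\bigl(f\circ(\omega,\id)\bigr)^{(2\,3)},
\]
and, as the $\mu_3\circ(D,D,\id)$-coefficient, the analogous eight-term expression with $\omega\circ(\id,\cdot)$ replaced by $l\circ(\id,\cdot)$ and $\omega^{(1\,2)}\circ(\id,\cdot)$ replaced by $r\circ(\id,\cdot)$.

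Finally we evaluate on $x_1\ot x_2\ot x_3$ using \eqref{action-on-map}. For example,
\[
\bigl(r\circ(\id,f)\bigr)^{(1\,2)}(x_1,x_2,x_3)=r\bigl(x_2\ot f(x_1,x_3)\bigr),
\]
and $\bigl(r\circ(\id,f^{(1\,2)})\bigr)^{(1\,2\,3)}$ yields $r(x_3\ot f(x_2,x_1))$. In the terms $f\circ(\omega,\id)$, the product $\omega$ is just $\diamond$ on $\g$. This gives the displayed formulas. No other components occur, because $K_3=\{(1,0,2),(0,2,1)\}$. (The ``$d$'' in the statement's first line should read $\hat d$.)

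The main obstacle is the bookkeeping of permutations when converting terms such as $\bigl(\omega^{(1\,2)}\circ(\id,f)\bigr)^{\sigma}$ into arguments of $r$. One has to be careful that $r$ takes its $\g$-argument first, whereas $\omega^{(1\,2)}(a,b)=b\diamond a$ puts it second. I would check the signs and argument orders directly against the two cocycle conditions \eqref{2-cocycle1} and \eqref{2-cocycle2}, specialised to $M=\g$ with $l(x\ot m)=x\diamond m$ and $r(x\ot m)=m\diamond x$. With these choices the result must reduce to Proposition~\ref{pro:formular 2-d}.
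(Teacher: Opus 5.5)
Your proposal is correct and follows essentially the same route as the paper, whose proof just says it is the computation of Proposition~\ref{pro:formular 2-d} redone. You supply the detail the paper omits: $\rho$ has the same $rNov$-shape as $\alpha(\nu)$ with $(\omega,\omega^{(1\,2)})$ replaced by $(l,r)$, so only the $\mathcal{V}ec$-components of the three $\rho\circ(\id_\g,\cdot)$ groups change, and your evaluations of the permuted terms, as well as your remark that the second $d$ in the statement should read $\hat d$, are right.
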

\begin{proof}
The proof is similar to that of Proposition \ref{pro:formular 2-d}, and we omit details.
\end{proof}
\begin{ex}\label{exmp:Polynomial}
Consider the series of Novikov algebras
$\g_\lambda $, $\lambda \in \mathds K$, constructed on the
 space $\mathds K[t,t^{-1}]$ of Laurent polynomials  with a basis $(e_k)_{k\in \mathds Z}$, $e_k=t^k$:
\[
e_n\diamond e_m = me_{n+m-1} + \lambda e_{n+m}.
\]
This is a particular case of the Gelfand-Dorfman construction from Example~\ref{ex-rN-N} relative to the ordinary derivation on $\mathds K[t,t^{-1}]$.
Let us show that $\huaH^2(\g_\lambda;\mathds K)=0$, where $\mathds K$
is considered as the trivial 1-dimensional representation of $\g_\lambda$.

First, assume $\lambda =0$, $\g=\g_0$.
Let $\kappa\in\Hom(\g\otimes\g,\K) $.
Then the conditions
$\hat d^{(1,1)}_{(0,2,1)}(\kappa)=0$
and
$\hat d^{(1,1)}_{(1,0,2)}(\kappa)=0$
turn into
\begin{gather}
    (m-n)\kappa(e_{n+m-1}, e_k) = k(\kappa (e_n, e_{m+k-1}) - \kappa(e_m, e_{n+k-1})), \label{eq:Z2-1Zero} \\
m\kappa(e_{n+m-1}, e_k) = k \kappa (e_{n+k-1}, e_m),
 \label{eq:Z2-2Zero}
\end{gather}
respectively.
It is easy to check that
for every sequence of scalars $(c_k)_{k\in \mathds Z}$,
the linear map $\kappa\in\Hom(\g\otimes\g,\K)$ defined by
$$ \kappa(e_n, e_m)= m c_{n+m-1}$$
is a 2-cocycle, i.e.
\eqref{eq:Z2-1Zero} and \eqref{eq:Z2-2Zero} hold.
Moreover, all 2-cocycles are of this form:
put $k=1$ into \eqref{eq:Z2-2Zero} to obtain
\[
\kappa(e_n,e_m) = m \kappa(e_{n+m-1}, e_1) = m c_{n+m-1}
\]
for all $n,m\in \mathds Z$.
Choose the 1-cochain $f(e_k) = -c_k$, $k\in \mathds Z$,
then
\[
\hat d^1_{\nov}(f)(e_n,e_m) = - f(e_n\diamond e_m) = -m f(e_{n+m-1}) = m c_{n+m-1} = \kappa(e_n,e_m).
\]

Next, assume $\lambda \ne 0$, $\g=\g_\lambda $.
The 2-cocycle conditions for $\kappa:\g\otimes \g\to \mathds K$
turn into
\begin{eqnarray}
    (m-n)\kappa(e_{n+m-1}, e_k) &=& k(\kappa (e_n, e_{m+k-1}) - \kappa(e_m, e_{n+k-1})) +\lambda (\kappa(e_n, e_{m+k})-\kappa(e_m,e_{n+k})), \label{eq:Z2-1nonZero} \\
m\kappa(e_{n+m-1}, e_k) &=& k \kappa (e_{n+k-1}, e_m) +\lambda(\kappa(e_{n+k},e_m)-\kappa(e_{n+m},e_k)).
 \label{eq:Z2-2nonZero}
\end{eqnarray}
Similarly, for every sequence of scalars $(c_k)_{k\in \mathds Z}$,
the linear map $\kappa\in\Hom(\g\otimes\g,\K)$ defined by
\[\kappa( e_n,e_m) = c_{n+m} + \frac{m}{\lambda}c_{n+m-1}
\]
satisfies both \eqref{eq:Z2-1nonZero} and \eqref{eq:Z2-2nonZero}.
As in the previous case, every 2-cocycle of $\g$ with coefficients in $\mathds K$ is of this form. Indeed,
put $k=0$ into \eqref{eq:Z2-2nonZero} to obtain
\[
\kappa(e_n,e_m) = \frac{m}{\lambda} \kappa(e_{n+m-1}, e_0) +\kappa(e_{n+m}, e_0),
\]
so $c_k = \kappa(e_k,e_0)$ works.
Choose the 1-cochain $f(e_k) = -\frac{1}{\lambda} c_k$, $k\in \mathds Z$,
then
\[
\hat d^1_{\nov}(f)(e_n,e_m) = - f(e_n\diamond e_m) = -m f(e_{n+m-1})-\lambda f(e_{n+m}) = \frac{m}{\lambda} c_{n+m-1} + c_{n+m}= \kappa(e_n,e_m).
\]
 $\huaH^2(\g_\lambda;\mathds K)=0$, for all $\lambda.$
\end{ex}
\begin{rmk}
The associated commutator algebra $\g_\lambda^{(-)}$ of the above Novikov algebra $\g_\lambda $
is isomorphic to the Witt Lie algebra, who has a non-trivial 2-cocycle
(the Gelfand-Fuchs cocycle) with scalar coefficients.
\end{rmk}

Using the same notations and following similar discussion as Theorem \ref{pro:pre-Lie-cochain}, we have the following result. The details are omitted.

\begin{thm}
  	Let $(\g,\diamond)$ be a Novikov algebra and $(M;l,r)$ its representation. Then the cochain complex $\frkC_{\pl}^\bullet(\g;M)$ of the pre-Lie algebra $(\g,\diamond)$ with the coefficients in $(M;l,r)$ is isomorphic to a quotient  of the cochain complex  $\frkC_{\nov}^\bullet(\g;M)$ of the Novikov algebra $(\g,\diamond)$ with the coefficients in $(M;l,r)$, i.e. we have the following short exact sequence of cochain complexes:
\begin{equation}
0 \to \overline{\frkC}_{\nov}^\bullet(\g;M) \stackrel{\iota}{\to}  {\frkC}_{\nov}^\bullet(\g;M) \stackrel{\pi}{\to} \frkC_{\pl}^\bullet(\g;M) \to 0,
\end{equation}
where $\iota$ and $\pi$ are the natural inclusion and projection. Consequently, there is a long exact sequence of the cohomology groups:
\begin{equation}
\cdots \to \overline{\huaH}^n_\nov(\g;M)   \stackrel{ \huaH^n(\iota)}{\to}  \huaH^n_\nov(\g;M) \stackrel{ \huaH^n(\pi)}{\to} \huaH^n_\pl(\g;M)   \stackrel{ c^n}{\to}   \overline{\huaH}^{n+1}_\nov(\g;M) \to\cdots,
\end{equation}
where the connecting map $c^n$ is given by
\begin{align*}
	\begin{split}
		c^n([f])= \left \{
		\begin{array}{ll}
			{[}0],                    & n=1,\\
			{[}\hat{d}_{(0,\cdots,0,1,n-2,2)}^{(0,\cdots,0,n-1,1)}(f)],     & n\geq 2,
		\end{array}
		\right.
	\end{split}
\end{align*}
for $[f]\in \huaH^n_\pl(\g;M)$.
\end{thm}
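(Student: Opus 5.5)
The plan is to repeat the argument of Theorem \ref{pro:pre-Lie-cochain}, now with coefficients. First, $\frkC_{\nov}^n(\g;M)$ is the direct sum over $K_n$ given in \eqref{eq:Nov-n-cochain-coeff}. Set $\overline{\frkC}_{\nov}^n(\g;M)$ to be the same direct sum over $K_n'=K_n\setminus(0,\cdots,0,n-1,1)$. The remaining summand $\Hom(\wedge^{n-1}\g\ot\g,M)$ is exactly the pre-Lie cochain space $\frkC^n_{\pl}(\g;M)$.

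The first task is to show that $\overline{\frkC}_{\nov}^\bullet(\g;M)$ is a subcomplex. Take the general description of $\hat d^n_\nov=\Gamma_{n+1}\circ\hat d^n_\ce\circ\Gamma_n^{-1}$ from Theorem \ref{cochain-Nov-coeff}. As in Section \ref{sec:nov}, $\hat d^{(k_{n-1},\cdots,k_0)}$ has components only in the summands indexed by $K'_{n+1}$. This holds because $\rho$ has the same $rNov$-part as $\alpha(\nu)$, so the combinatorics of the $rNov$-components are unchanged.

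Among those targets, the index $(0,\cdots,0,n,1)$ is reached only from the case $k_1=n-1$, $k_0=1$, through its last arrow $k_1\mapsto k_1+1$. I will check this directly. The only member of $K'_{n+1}$ with $k'_0=1$ is $(0,k_{n-1},\cdots,k_2,k_1+1,k_0)$, and it has the pre-Lie shape only if $k_0=1$. Then $k_1=n-1$, which forces every $k_j=0$ for $j\ge 2$. Hence $\iota$ is a cochain map, and the quotient complex is $\bigoplus_n\Hom(\wedge^{n-1}\g\ot\g,M)$ with differential $\hat d^{(0,\cdots,0,n-1,1)}_{(0,\cdots,0,n,1)}$.

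The second task is to identify this quotient differential with the pre-Lie differential $d_\pl$ with coefficients. I would prove a coefficient version of Proposition \ref{pro-pL-rcom}(i) by the same computation, replacing each occurrence of $\omega$ that acts on the output of $f$ by $l$ or $r$. One can do this by expanding $\hat d_\ce$ with $\rho=\mu\cc(D,\id)\ot l-\mu\cc(\id,D)\ot r$ and collecting the coefficient of $\mu_{n+1}\circ(D,\cdots,D,\id)$.

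A cleaner route is to use the surjection $\gamma:rNov\to Perm$ from the remark after Theorem \ref{pro:pre-Lie-cochain}. Tensoring with $\mathcal{V}ec$ gives a pseudo-tensor functor $rNov\ot\mathcal{V}ec\to Perm\ot\mathcal{V}ec$, which sends $\rho$ to $\xi_2\ot l-\xi_2^{(1\,2)}\ot r$. Running the proof of Theorem \ref{pre-Lie-cochain} with $\rho$ in place of $\alpha(\nu)$ in the first sum shows that $\ce_{Perm\ot\mathcal{V}ec}(\g;M)$ is isomorphic to the pre-Lie complex with coefficients in $(M;l,r)$, since $(M;l,r)$ is in particular a pre-Lie representation. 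Functoriality then makes the induced map a cochain map, and its kernel is exactly $\overline{\frkC}_\nov$. This gives the short exact sequence.

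The long exact sequence is the standard one. For $c^n$, lift $f\in\Hom(\wedge^{n-1}\g\ot\g,M)$ to itself, apply $\hat d_\nov^n$, and discard the pre-Lie component. For $n\ge2$, the only other target from $(0,\cdots,0,n-1,1)$ in $K'_{n+1}$ is $(0,\cdots,0,1,n-2,2)$, so $c^n([f])=[\hat d^{(0,\cdots,0,n-1,1)}_{(0,\cdots,0,1,n-2,2)}(f)]$. For $n=1$, $K'_2=\emptyset$, so $c^1=0$.

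The main obstacle is the identification of the quotient differential with $d_\pl$ including the correct signs, that is, the $\rho$-version of the term-collection in Theorem \ref{pre-Lie-cochain}. I would check it first in low degree against Proposition \ref{ex:coeff-2-d}, where the $(0,2,1)$ component should visibly be the pre-Lie $2$-coboundary with $l$ and $r$.
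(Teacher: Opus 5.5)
Your proposal is correct and follows the paper's argument. The subcomplex property and the formula for $c^n$ come from which summands $\hat d_\nov$ can reach. The quotient differential is identified with the pre-Lie differential with coefficients either by collecting the coefficient of $\mu_{n+1}\circ(D,\ldots,D,\id)$, which is the coefficient analogue of Proposition \ref{pro-pL-rcom}, or via $\gamma:rNov\to Perm$, as in the remark after Theorem \ref{pro:pre-Lie-cochain}.

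One caution: the target list $K'_{n+1}$ you import from Section \ref{sec:nov} is incomplete once $n\geq 4$. An exponent $p\geq 3$ can also split into two exponents $\geq 2$ summing to $p+1$; for example, the term $3u''v''$ in $D^3(u'v)$ gives a generally nonzero component $(1,0,0,3)\to(0,0,2,0,3)$. This does not affect your conclusions. Under each of the three moves on exponent multisets (adjoin a $1$; adjoin a $0$ and raise one exponent by $1$; replace one exponent $p$ by two exponents summing to $p+1$), the pre-Lie index $(0,\ldots,0,n,1)$ can only be reached from $(0,\ldots,0,n-1,1)$, and the only other target of $(0,\ldots,0,n-1,1)$ is still $(0,\ldots,0,1,n-2,2)$. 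In your $\gamma$-route the question does not arise, since the subcomplex is simply $\ker\tilde\gamma$.
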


The following example shows that there is an essential distinction between the cohomology group of a Novikov algebra and the cohomology group of the underlying pre-Lie algebra.
\begin{ex}
Let $\g=(\g_0,\diamond)$ be the Novikov algebra given by Example \ref{exmp:Polynomial}, namely,
$
\g_0=\mathds K[t,t^{-1}]
$
with basis $\{e_n\mid n\in\mathds Z\}$ and multiplication
$$
e_n\diamond e_m=me_{n+m-1},
\quad n,m\in\mathds Z.
$$
We regard $\mathds K$ as the trivial representation of both the Novikov algebra $\g_0$ and its underlying pre-Lie algebra.  Let us show that $\huaH_{\pl}^2(\g_0;\mathds K)\neq0$.

Define a bilinear map
$
\kappa:\g_0\otimes\g_0\longrightarrow\mathds K
$
by
$$
\kappa(e_n,e_m)=mn\delta_{n+m,1},
\quad n,m\in\mathds Z.
$$
We claim that $\kappa$ is a non-trivial $2$-cocycle of the underlying pre-Lie algebra.
Indeed, the $2$-cocycle condition $\hat d_{\pl}^2(\kappa)=0$ is equivalent to \eqref{eq:Z2-1Zero}.
For all $m,n,k\in\mathds Z$, applying $\kappa$ to \eqref{eq:Z2-1Zero}, we have
\begin{align*}
&\quad(m-n)\kappa(e_{n+m-1},e_k)-k\bigl(\kappa(e_n,e_{m+k-1})-\kappa(e_m,e_{n+k-1})\bigr)\\
&=k(m-n)(n+m-1)\delta_{n+m+k,2}-k(m-n)(1-k)\delta_{n+m+k,2}=k(m-n)(n+m+k-2)\delta_{n+m+k,2}\\
&=0,
\end{align*}
which implies that $\kappa\in \ker(\hat d_{\pl}^2).$

We next show that $\kappa$ is not a coboundary. Suppose that there exists a $1$-cochain
$
f:\g_0 \longrightarrow \mathds K
$
such that $\hat d_{\pl}^1(f)=\kappa$. Since $\mathds K$ is the trivial representation,
$$
\hat d_{\pl}^1(f)(e_n,e_m)=-f(e_n\diamond e_m)=-m\,f(e_{n+m-1}).
$$
Taking $m=1$, we get
$
\kappa(e_n,e_1)=-f(e_n).
$
However,
$$
\kappa(e_n,e_1)=n\,\delta_{n+1,1}=n\,\delta_{n,0}=0
$$
for all $n\in\mathds Z$. Hence $f(e_n)=0$ for every $n$, so $f=0$. This would force $\kappa=0$, which is impossible since $\kappa(e_2,e_{-1})=-2\neq 0.$
Therefore,
$
0\neq[\kappa] \in \huaH_{\pl}^2(\g_0;\mathds K),
$
and consequently
$
\huaH_{\pl}^2(\g_0;\mathds K)\neq 0.
$

Example~\ref{exmp:Polynomial} gives $\huaH_{\nov}^2(\g_0;\mathds K)=0$. Hence
\[
	\huaH_{\pl}^2(\g_0;\mathds K)
	\not\cong
	\huaH_{\nov}^2(\g_0;\mathds K).
\]
This example illustrates that the second cohomology group of the underlying pre-Lie algebra need not agree with that of the Novikov algebra.

\end{ex}

\section{Abelian extensions of Novikov algebras}\label{sec:ext}

In this subsection, we study abelian extensions of a Novikov algebra and show that they are classified by the second cohomology group.

Let $(\g,\diamond)$ and $(\hat{\g},\hat{\diamond})$ be Novikov algebras. An exact sequence of Novikov algebra homomorphisms
\[
0 \longrightarrow M \overset{\iota}{\longrightarrow} \hat{\g} \overset{\pi}{\longrightarrow} \g \longrightarrow 0
\]
is called an \textbf{extension} of $\g$ by $M$. For such an extension, $M$ is regarded as the ideal $\ker(\pi)$ of $\hat{\g}$ via the embedding $\iota$.
\begin{defi}
\begin{itemize}
\item[{\rm(i)}] An extension is called \textbf{abelian} if $M$ is an abelian ideal of $\hat{\g}$, i.e. $M^2=0$.
\item[{\rm(ii)}] An extension is called \textbf{central} if $M\subset Z(\hat{\g})$,  i.e.
\begin{align*}
x\hat{\diamond} m=0=m\hat{\diamond} x,\quad\forall x\in \hat\g, m\in M.
\end{align*}
\item[{\rm(iii)}] A linear map $s : \g \to \hat{\g}$ is called a \textbf{section} of the extension if $\pi \circ s = \id_{\g}$.
\end{itemize}
\end{defi}

\begin{defi}
	Two extensions $\hat{\g}$ and $\hat{\g}'$ of $\g$ by $M$ are called \textbf{isomorphic} if there is an isomorphism of Novikov algebras such that the following commutative diagram commutes:
	$$
	\xymatrix{
		0 \ar[r] & M \ar[r]^{\iota} \ar@{=}[d]^{} & \hat{\g} \ar[r]^{\pi} \ar[d]^{\gamma}  & \g \ar[r] \ar@{=}[d]^{}& 0 \\
		0 \ar[r] & M \ar[r]^{\iota'} & \hat{\g}' \ar[r]^{\pi'} & \g \ar[r] & 0 &
	}
	$$
\end{defi}

Let $\hat{\g}$ be an abelian extension of $\g$ by $M$. Define two linear maps $l,r:\g\ot M\to M$ by
\begin{align}\label{abelian-exten-l-r}
l(x\ot m)=s(x)\hat{\diamond}m,\quad r(x\ot m)=m\hat{\diamond} s(x), \quad\forall x\in\g,m\in M.
\end{align}
\begin{lem}\label{lem-rep-extension}
With the above notations, $(M;l,r)$ is a representation of the Novikov algebra $(\g,\diamond)$ and
does not depend on the choice of sections $s$. Moreover, isomorphic abelian extensions give the same representation of $\g$ on $M$.
\end{lem}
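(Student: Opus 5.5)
The plan is a direct verification in three steps: well-definedness, independence of the section, and invariance under isomorphism of extensions.

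\emph{Step 1: the identities.} I fix a section $s$ and use $M^2=0$ together with the fact that $M=\ker\pi$ is an ideal. This guarantees that $l$ and $r$ take values in $M$. I then apply the two Novikov identities \eqref{l-Nov-1} and \eqref{l-Nov-2} of $(\hat\g,\hat\diamond)$ to triples in which exactly one entry lies in $M$ and the other two are of the form $s(x),s(y)$. The key observation is that
\[
s(x)\hat\diamond s(y)-s(x\diamond y)\in\ker\pi=M.
\]
Hence, whenever $s(x)\hat\diamond s(y)$ is multiplied by an element $m\in M$, it may be replaced by $s(x\diamond y)$, since the correction term lies in $M$ and $M^2=0$.

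With this replacement, each identity translates as follows.
\begin{itemize}
\item The left-symmetry identity \eqref{l-Nov-1} with $m$ in the third slot gives \eqref{Nov-rep-1}.
\item \eqref{l-Nov-1} with $m$ in the second slot (the first slot is symmetric to it) gives \eqref{Nov-rep-2}.
\item The right-commutativity identity \eqref{l-Nov-2} with $m$ in the first slot gives \eqref{Nov-rep-4}.
\item \eqref{l-Nov-2} with $m$ in the second or third slot gives \eqref{Nov-rep-3}.
\end{itemize}
In each case I only need to rewrite both sides in the composition notation of \eqref{Nov-rep-1}--\eqref{Nov-rep-4}. For example, $(r\circ(\id_\g,l))^{(1\,2)}(x,y,m)=r(y,l(x,m))=(s(x)\hat\diamond m)\hat\diamond s(y)$.

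\emph{Step 2: independence of the section.} If $s'$ is another section, then $s'(x)-s(x)\in M$, so
\[
s'(x)\hat\diamond m-s(x)\hat\diamond m\in M^2=0,
\]
and likewise for the right action. Thus $l$ and $r$ are unchanged.

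\emph{Step 3: invariance under isomorphism.} Let $\gamma$ be an isomorphism of abelian extensions and $s$ a section of $\hat\g$. Since $\pi'\gamma=\pi$, the map $\gamma\circ s$ is a section of $\hat\g'$. Because $\gamma$ restricts to the identity on $M$ and is a homomorphism,
\[
\gamma(s(x))\hat\diamond' m=\gamma(s(x)\hat\diamond m)=s(x)\hat\diamond m,
\]
and similarly for $r$. Combined with Step 2, the representation of $\hat\g'$ computed with any section coincides with that of $\hat\g$.

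The main obstacle is Step 1, specifically matching each slot-placement of $m$ with the correct identity among \eqref{Nov-rep-1}--\eqref{Nov-rep-4} and checking the permutation conventions. I would carry this out by evaluating both sides of each of \eqref{Nov-rep-1}--\eqref{Nov-rep-4} on $(x,y,m)$ using \eqref{action-on-map}.
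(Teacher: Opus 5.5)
Your proposal is correct and follows essentially the same route as the paper. In Step 1 you replace $s(x)\hat{\diamond}s(y)$ by $s(x\diamond y)$ next to elements of $M$ using $M^2=0$, in Step 2 you compare sections via $s'-s\in M$, and in Step 3 you transport along $\gamma$; taking $\gamma\circ s$ as a section and then invoking Step 2 is just the paper's argument that $s_1(x)-\gamma(s(x))\in M$, split into two steps. Your slot-by-slot matching of \eqref{l-Nov-1} and \eqref{l-Nov-2} with \eqref{Nov-rep-1}--\eqref{Nov-rep-4} is also correct, and it spells out the cases the paper dismisses with ``similarly''.
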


\begin{proof}
 Since $\pi$ is a homomorphism of Novikov algebras, we have $s(x)\hat{\diamond}s(y)-s(x\diamond y)\in M$ for all $x,y\in\g$. Thus, $(s(x)\hat{\diamond}s(y)-s(x\diamond y))\hat{\diamond} m=0$. We have
	\begin{eqnarray*}
		&&l(x\diamond y\ot m)-l(y\diamond x\ot m)-l(x\ot l(y\ot m))+l(y\ot l(x\ot m))\\
		&=&s(x\diamond y) \hat{\diamond} m-s(y\diamond x)\hat{\diamond} m-s(x)\hat{\diamond}(s(y)\hat{\diamond} m)+s(y)\hat{\diamond}(s(x)\hat{\diamond} m)\\
		&=&(s(x)\hat{\diamond}s(y))\hat{\diamond}m-(s(y)\hat{\diamond}s(x))\hat{\diamond} m-s(x)\hat{\diamond}(s(y)\hat{\diamond} m)+s(y)\hat{\diamond}(s(x)\hat{\diamond} m)\\
		&\overset{\eqref{l-Nov-1}}{=}&0,
	\end{eqnarray*}
	which gives \eqref{Nov-rep-1}. The other identities \eqref{Nov-rep-2}-\eqref{Nov-rep-4} are obtained similarly. Therefore, $(M;l,r)$ is a representation of $(\g,\diamond)$.

	Next, let $s':\g\to\hat{\g}$ be another section. It follows from $\pi(s'(x)-s(x))=0$ that $s'(x)-s(x)\in M$ for all $x\in \g$. Then we have
	\begin{align}
		l'(x\ot m)&=s'(x)\hat{\diamond} m=(s'(x)-s(x)+s(x))\hat{\diamond} m=s(x)\hat{\diamond} m=l(x\ot m),\label{e-1}\\
		r'(x\ot m)&=m\hat{\diamond}s'(x)=m\hat{\diamond}(s'(x)-s(x)+s(x))=m\hat{\diamond}s(x)=r(x\ot m).\label{e-2}
	\end{align}
	Hence $l$ and $r$ do not depend on the choice of sections.
	
Finally, assume that $\hat{\g}$ and $\hat{\g}_1$ are isomorphic abelian extensions, and $\gamma:\hat{\g}\to \hat{\g}_1$ is the isomorphism of Novikov algebras satisfying such that $\gamma\circ\iota=\iota_1, \pi=\pi_1\circ\gamma$.
Choose sections $s$ and $s_1$ of $\pi$ and $\pi_1$, respectively. For any $x\in\g$, we have $\pi_1(s_1(x))=x=\pi_1(\gamma(s(x)))$, hence $s_1(x)-\gamma(s(x))\in M$. Then, for every $m\in M$,
\begin{align*}
	s_1(x)\hat{\diamond} m
	&= (s_1(x)-\gamma(s(x))+\gamma(s(x)))\hat{\diamond} m
	= \gamma(s(x))\hat{\diamond} m
	= \gamma(s(x)\hat{\diamond} m)
	= s(x)\hat{\diamond} m,\\
	m\hat{\diamond} s_1(x)
	&= m\hat{\diamond} (s_1(x)-\gamma(s(x))+\gamma(s(x)))
	= m\hat{\diamond} \gamma(s(x))
	= \gamma(m\hat{\diamond} s(x))
	= m\hat{\diamond} s(x).
\end{align*}
Therefore, isomorphic abelian extensions give rise to the same representation.
\end{proof}

Let $0\to M\to \hat{\g}\to \g\to0 $ be an abelian extension and $s:\g\to\hat{\g}$ a section. Define a linear map $\kappa:\g\ot\g\to M$ by
\begin{align}\label{abelian-exten-kap}
\kappa(x,y):=s(x)\hat{\diamond}s(y)-s(x\diamond y), \quad \forall x,y\in \g.
\end{align}
$\kappa$ is well-defined since $\pi(\kappa(x,y))=0$.

\begin{pro}
With the above notations, $\kappa$ is a $2$-cocycle on the Novikov algebra $(\g,\diamond)$ with coefficients in the representation $(M;l,r)$ given by Lemma \ref{lem-rep-extension}. Moreover, its cohomology class in $\huaH^2(\g;M)$ does not depend on the choice of the section $s$.
\end{pro}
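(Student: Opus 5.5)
Since $\frkC^2_{\nov}(\g;M)=\Hom(\g\ot\g,M)$ (as $K_2=\{(1,1)\}$), Proposition~\ref{ex:coeff-2-d} shows that $\hat d^2_{\nov}(\kappa)=0$ amounts to two identities. The first is the vanishing of $\hat d^{(1,1)}_{(0,2,1)}(\kappa)$, which is the pre-Lie type identity. The second is the vanishing of $\hat d^{(1,1)}_{(1,0,2)}(\kappa)$, which is the right-commutativity type identity. I will derive both directly from the Novikov identities \eqref{l-Nov-1} and \eqref{l-Nov-2} of $(\hat\g,\hat\diamond)$, applied to the elements $s(x_1),s(x_2),s(x_3)$.

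\textbf{The two cocycle identities.} First rewrite each product in $\hat\g$ using $s(x)\hat\diamond s(y)=s(x\diamond y)+\kappa(x,y)$. For a triple product we get
\[
(s(x)\hat\diamond s(y))\hat\diamond s(z)=s((x\diamond y)\diamond z)+\kappa(x\diamond y,z)+r(z\ot\kappa(x,y)).
\]
This uses that $M$ is an abelian ideal and the definition \eqref{abelian-exten-l-r} of $r$. Similarly,
\[
s(x)\hat\diamond(s(y)\hat\diamond s(z))=s(x\diamond(y\diamond z))+\kappa(x,y\diamond z)+l(x\ot\kappa(y,z)).
\]
Substitute these into the left pre-Lie identity for $s(x_1),s(x_2)$ acting on $s(x_3)$. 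The $s(\cdot)$-parts cancel because $\g$ is pre-Lie, and the $M$-parts give exactly $\hat d^{(1,1)}_{(0,2,1)}(\kappa)(x_1,x_2;x_3)=0$. Substituting instead into $(s(x_1)\hat\diamond s(x_2))\hat\diamond s(x_3)=(s(x_1)\hat\diamond s(x_3))\hat\diamond s(x_2)$ gives $r(x_3\ot\kappa(x_1,x_2))+\kappa(x_1\diamond x_2,x_3)=r(x_2\ot\kappa(x_1,x_3))+\kappa(x_1\diamond x_3,x_2)$. This is $\hat d^{(1,1)}_{(1,0,2)}(\kappa)=0$.

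\textbf{Independence of the section.} Let $s'$ be another section and put $\phi:=s'-s\in\Hom(\g,M)=\frkC^1_{\nov}(\g;M)$. Since $M\hat\diamond M=0$,
\[
\kappa'(x,y)-\kappa(x,y)=s(x)\hat\diamond\phi(y)+\phi(x)\hat\diamond s(y)-\phi(x\diamond y)=l(x\ot\phi(y))+r(y\ot\phi(x))-\phi(x\diamond y).
\]
I then need to identify this expression with $\hat d^1_{\nov}(\phi)(x,y)$. To do so, compute $\hat d^1_{\nov}$ from Theorem~\ref{cochain-Nov-coeff}: take $\Gamma_1^{-1}(\phi)=\id\ot\phi$, apply \eqref{dce-coeff} with $\rho$ from \eqref{def-rho}, and read off the coefficient of $\mu_2\circ(D,\id)$. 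This mirrors the trivial-coefficient formula used in Theorem~\ref{deformation-2-cocycle}(ii) and in Example~\ref{exmp:Polynomial}, and it should give $\hat d^1_{\nov}(\phi)(x,y)=l(x\ot\phi(y))+r(y\ot\phi(x))-\phi(x\diamond y)$. It follows that $[\kappa']=[\kappa]$.

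\textbf{Main obstacle.} The main obstacle is sign and convention bookkeeping. Two points need care: the order of arguments in $r(x\ot m)=m\hat\diamond s(x)$, and the verification that the explicit $\hat d^1_{\nov}$ has exactly the stated form. For the latter I would do a short computation with $\Sh^{-1}(1,1)$ and $\Sh^{-1}(2,0)$, in the same way as in the proof of Proposition~\ref{pro:formular 2-d}.
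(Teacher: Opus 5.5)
Your proposal is correct and follows the paper's proof. Both components of $\hat d^2_{\nov}(\kappa)$ reduce to the two Novikov identities in $\hat\g$; you run the expansion from the identities to the cocycle formulas, while the paper goes the other way. A change of section gives $\kappa'-\kappa=\hat d^1_{\nov}(s'-s)$. The paper only asserts the explicit form of $\hat d^1_{\nov}$ with coefficients, whereas you plan to check it by reading off the coefficient of $\mu\circ(D,\id)$, and that check does give $l(x\ot\phi(y))+r(y\ot\phi(x))-\phi(x\diamond y)$.
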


\begin{proof}
We need to show that $\hat{d}_{\mathrm{Nov}}^2(\kappa)=0$. Equivalently, by Proposition \ref{ex:coeff-2-d}, both components vanish:
	\[
	\hat{d}_{(1,0,2)}^{(1,1)}(\kappa)=0,\qquad \hat{d}_{(0,2,1)}^{(1,1)}(\kappa)=0.
	\]
	For all $x_1,x_2,x_3\in\g$, we have
	\begin{eqnarray*}
		&&\bigl(\hat{d}_{(1,0,2)}^{(1,1)}(\kappa)\bigr)(x_1;x_2,x_3)\\
		&=&r(x_2\ot\kappa(x_1,x_3))-r(x_3\ot\kappa(x_1,x_2))-\kappa(x_1\diamond x_2,x_3)+\kappa(x_1\diamond x_3,x_2)\\
		&=&(s(x_1)\hat{\diamond} s(x_3))\hat{\diamond} s(x_2)-s(x_1\diamond x_3)\hat{\diamond}s(x_2)-(s(x_1)\hat{\diamond} s(x_2))\hat{\diamond} s(x_3)+s(x_1\diamond x_2)\hat{\diamond}s(x_3)\\
		&&-s(x_1\diamond x_2)\hat{\diamond} s(x_3)+s((x_1\diamond x_2)\diamond x_3)+s(x_1\diamond x_3)\hat{\diamond} s(x_2)-s((x_1\diamond x_3)\diamond x_2)\\
		&=&(s(x_1)\hat{\diamond} s(x_3))\hat{\diamond} s(x_2)-(s(x_1)\hat{\diamond} s(x_2))\hat{\diamond} s(x_3)+s((x_1\diamond x_2)\diamond x_3)-s((x_1\diamond x_3)\diamond x_2)\\
		&\overset{\eqref{l-Nov-2}}{=}&0,
	\end{eqnarray*}
	and
	\begin{eqnarray*}
		&&\big(\hat{d}_{(0,2,1)}^{(1,1)}( \kappa)\big)(x_1,x_2;x_3)\\
		&=&l(x_1\ot \kappa(x_2,x_3))-l(x_2\ot \kappa(x_1,x_3))-r(x_3\ot \kappa(x_1,x_2))+r(x_3\ot \kappa(x_2,x_1))\\
		&&- \kappa(x_1\diamond x_2,x_3)+ \kappa(x_2\diamond x_1,x_3)- \kappa(x_2,x_1\diamond x_3)+ \kappa(x_1,x_2\diamond x_3)\\
		&=& s(x_1)\hat{\diamond}\bigl(s(x_2)\hat{\diamond}s(x_3)\bigr)
		- s(x_1)\hat{\diamond}s(x_2\diamond x_3) - s(x_2)\hat{\diamond}\bigl(s(x_1)\hat{\diamond}s(x_3)\bigr)+ s(x_2)\hat{\diamond}s(x_1\diamond x_3) \\
		&&- \bigl(s(x_1)\hat{\diamond}s(x_2)\bigr)\hat{\diamond}s(x_3)
		+ s(x_1\diamond x_2)\hat{\diamond}s(x_3)+ \bigl(s(x_2)\hat{\diamond}s(x_1)\bigr)\hat{\diamond}s(x_3)- s(x_2\diamond x_1)\hat{\diamond}s(x_3) \\
		&&- s(x_1\diamond x_2)\hat{\diamond}s(x_3)
		+ s((x_1\diamond x_2)\diamond x_3) + s(x_2\diamond x_1)\hat{\diamond}s(x_3)- s((x_2\diamond x_1)\diamond x_3) \\
		&&- s(x_2)\hat{\diamond}s(x_1\diamond x_3)
		+ s(x_2\diamond(x_1\diamond x_3)) + s(x_1)\hat{\diamond}s(x_2\diamond x_3)- s(x_1\diamond(x_2\diamond x_3))\\
		&=& s(x_1)\hat{\diamond}\bigl(s(x_2)\hat{\diamond}s(x_3)\bigr)
		- s(x_2)\hat{\diamond}\bigl(s(x_1)\hat{\diamond}s(x_3)\bigr) - \bigl(s(x_1)\hat{\diamond}s(x_2)\bigr)\hat{\diamond}s(x_3)
		+ \bigl(s(x_2)\hat{\diamond}s(x_1)\bigr)\hat{\diamond}s(x_3)\\
		&& + s((x_1\diamond x_2)\diamond x_3)
		- s((x_2\diamond x_1)\diamond x_3) + s(x_2\diamond(x_1\diamond x_3))
		- s(x_1\diamond(x_2\diamond x_3))\\
		&\overset{\eqref{l-Nov-1}}{=}&0.
	\end{eqnarray*}
	Therefore $\kappa$ is a $2$-cocycle.
	
	Next, let $s':\g\to\hat{\g}$ be another section. Define a linear map $t:=s-s':\g\to M$. $t$ is well-defined. Indeed, $\pi\circ t=\pi\circ(s-s')(x)=0$ for all $x\in\g$, which implies that $t(\g)\subset M$.
		On the other hand, by $s=s'+t$, we have
	\begin{eqnarray*}
		\kappa(x_1,x_2)-\kappa'(x_1,x_2)&=&s(x_1)\hat{\diamond} s(x_2)-s(x_1\diamond x_2)-s'(x_1)\hat{\diamond} s'(x_2)+s'(x_1\diamond x_2)\\
		&=&(s'+t)(x_1)\hat{\diamond} (s'+t)(x_2)-(s'+t)(x_1\diamond x_2)-s'(x_1)\hat{\diamond} s'(x_2)+s'(x_1\diamond x_2)\\
		&=&s'(x_1)\hat{\diamond}t(x_2)+t(x_1)\hat{\diamond} s'(x_2)-t(x_1\diamond x_2)+t(x_1)\hat{\diamond}t(x_2)\\
		&=&l(x_1\ot t(x_2))+r(x_2\ot t(x_1))-t(x_1\diamond x_2),
	\end{eqnarray*}
	which implies that $\kappa-\kappa'=\hat{d}_\nov^1(t)$. Therefore, $\kappa$ and $\kappa'$ are in the same cohomology class.
\end{proof}

Let $\hat{\g}$ be an abelian extension of $\g$ by $M$. For a section $s:\g\to\hat{\g}$, the representation $(M;l,r)$ gives a Novikov algebra structure on $\g\oplus M$ as follows:
\begin{align*}
	(x+m)\bullet (y+u)=x\diamond y+l(x\ot u)+r(y\ot m)+\kappa(x,y),\quad\forall x,y\in\g,\ m,u\in M,
\end{align*}
where $l,r,\kappa$ are defined by \eqref{abelian-exten-l-r} and \eqref{abelian-exten-kap}.
In the sequel, we only consider abelian extensions of the form
\[
\g\oplus_{l,r,\kappa} M:=(\g\oplus M,\bullet).
\]

\begin{thm}
	Two abelian extensions $\g\oplus_{l,r,\kappa} M$ and $\g\oplus_{l,r,\kappa'} M$ are isomorphic if and only if $\kappa$ and $\kappa'$ are in the same cohomology class.
\end{thm}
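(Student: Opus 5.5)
We argue directly with the explicit formula for $\hat{d}^1_\nov$ on $1$-cochains $t\in\Hom(\g,M)$, which was already obtained in the proof of the preceding proposition:
\[
\hat{d}^1_\nov(t)(x,y)=l(x\ot t(y))+r(y\ot t(x))-t(x\diamond y).
\]
Since $\frkC^1_\nov(\g;M)=\Hom(\g,M)$, ``same cohomology class'' means $\kappa-\kappa'=\hat{d}^1_\nov(t)$ for some linear $t:\g\to M$. Both extensions carry the same representation $(M;l,r)$, so the only difference between the products $\bullet$ and $\bullet'$ is the cocycle term.

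For the direction ``isomorphic $\Rightarrow$ cohomologous'', take an isomorphism $\gamma:\g\oplus_{l,r,\kappa}M\to\g\oplus_{l,r,\kappa'}M$ making the diagram commute. The condition $\gamma\circ\iota=\iota'$ forces $\gamma(m)=m$, and $\pi'\circ\gamma=\pi$ forces $\gamma(x)-x\in M$. Hence $\gamma(x+m)=x+t(x)+m$ for a unique linear $t:\g\to M$. We then expand $\gamma(x\bullet y)=\gamma(x)\bullet'\gamma(y)$ for $x,y\in\g$. The left side is $x\diamond y+t(x\diamond y)+\kappa(x,y)$. Using $M^2=0$, the right side is $x\diamond y+l(x\ot t(y))+r(y\ot t(x))+\kappa'(x,y)$. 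Comparing gives $\kappa-\kappa'=\hat{d}^1_\nov(t)$.

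For the converse, given $t$ with $\kappa-\kappa'=\hat{d}^1_\nov(t)$, define $\gamma(x+m):=x+t(x)+m$. This is linear and bijective, with inverse $x+m\mapsto x-t(x)+m$, and it clearly commutes with $\iota,\iota',\pi,\pi'$. It remains to check that $\gamma$ is multiplicative on all of $\g\oplus M$. Expand $\gamma\big((x+m)\bullet(y+u)\big)$ and $\gamma(x+m)\bullet'\gamma(y+u)$. The terms $l(x\ot u)$ and $r(y\ot m)$ appear identically on both sides. The terms with $t$ and $\kappa$ match exactly by the cobounding identity. The product of two elements of $M$ vanishes on both sides.

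There is no serious obstacle here. The only points needing care are bookkeeping: the order of the arguments in $r(y\ot t(x))$, which corresponds to $t(x)\bullet y$, and confirming that $\hat{d}^1_\nov$ has precisely the form used in the preceding proof. I would verify the latter from Theorem~\ref{cochain-Nov-coeff} in the same way as the $n=1$ case of Proposition~\ref{pro:formular 2-d}: $\Gamma_1^{-1}(t)=\id\ot t$, and $\hat d^1_\ce(\id\ot t)=\rho\cc(\id,t)-(\rho\cc(\id,t))^{(12)}-t\cc\alpha(\nu)$. Reading off the coefficient of $\mu\cc(D,\id)$ gives the formula above.
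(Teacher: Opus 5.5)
Your proof is correct and follows the paper's argument. You write the isomorphism as $\gamma(x+m)=x+t(x)+m$ and compare $\gamma(x\bullet y)$ with $\gamma(x)\bullet'\gamma(y)$ to obtain $\kappa-\kappa'=\hat{d}^1_{\nov}(t)$. You also carry out two things the paper leaves implicit, and both are done correctly: the converse, including multiplicativity on all of $\g\oplus M$ (the paper only says it follows ``similarly''), and the derivation of the formula for $\hat{d}^1_{\nov}$ from $\hat{d}^1_{\ce}$.
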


\begin{proof}
Let $\g\oplus_{l,r,\kappa} M=(\g\oplus M,\bullet)$ and $\g\oplus_{l,r,\kappa'} M=(\g\oplus M,\bullet')$ be isomorphic abelian extensions, and let $\gamma:\g\oplus_{l,r,\kappa} M\to \g\oplus_{l,r,\kappa'} M$ be the corresponding isomorphism. Then there exists a linear map $f:\g\to M$ such that
\[
\gamma(x+m)=x+f(x)+m,\quad \forall x\in\g,\ m\in M.
\]
For all $x,y\in\g$, we have
\begin{align*}
		\gamma(x\bullet y)&=x\diamond y+f(x\diamond y)+\kappa(x,y),\\
		\gamma(x)\bullet'\gamma(y)&=x\diamond y+l(x\ot f(y))+r(y\ot f(x))+\kappa'(x,y).
\end{align*}
Since $\gamma$ is an isomorphism of Novikov algebras, we have $\gamma(x\bullet y)-\gamma(x)\bullet'\gamma(y)=0$. Thus, we obtain
\begin{align*}
		\kappa(x,y)-\kappa'(x,y)&=l(x\ot f(y))+r(y\ot f(x))-f(x\diamond y)=\hat{d}_{\mathrm{Nov}}^1(f),
\end{align*}
which means that $\kappa$ and $\kappa'$ are in the same cohomology class. The converse can be proved similarly.
\end{proof}

\begin{cor}
The Novikov algebra $\g_\lambda $ from Example~\ref{exmp:Polynomial}
has no non-trivial  {central} extensions.
\end{cor}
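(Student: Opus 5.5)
The plan is to reduce the statement to the vanishing $\huaH^2(\g_\lambda;\mathds K)=0$ proved in Example~\ref{exmp:Polynomial}, combined with the classification theorem for abelian extensions just established.

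First observe that a central extension $0\to M\to\hat\g\to\g_\lambda\to 0$ is in particular abelian: for $m,m'\in M\subset Z(\hat\g)$ we have $m\hat\diamond m'=0$. The representation $(M;l,r)$ attached to it by Lemma~\ref{lem-rep-extension} is then trivial, since $l(x\ot m)=s(x)\hat\diamond m=0$ and $r(x\ot m)=m\hat\diamond s(x)=0$. So every central extension is isomorphic to one of the form $\g_\lambda\oplus_{0,0,\kappa}M$ with $\kappa\in\Hom(\g_\lambda\ot\g_\lambda,M)$ a $2$-cocycle for the trivial representation.

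Next, show that $\huaH^2(\g_\lambda;M)=0$ for any vector space $M$ with the trivial action. Example~\ref{exmp:Polynomial} treats only $M=\mathds K$, so an extension argument is needed. Choose a basis $\{u_\beta\}$ of $M$ and write $\kappa=\sum_\beta\kappa_\beta u_\beta$ with $\kappa_\beta\in\Hom(\g_\lambda\ot\g_\lambda,\mathds K)$. For each fixed pair of arguments only finitely many $\kappa_\beta$ are nonzero. By Proposition~\ref{ex:coeff-2-d} with $l=r=0$, the cocycle equations are linear and coordinatewise, so each $\kappa_\beta$ is a scalar cocycle.

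The main obstacle is to avoid infinite sums when forming the primitive. The explicit formulas in Example~\ref{exmp:Polynomial} settle this: they give $f_\beta(e_k)=-\kappa_\beta(e_k,e_1)$ if $\lambda=0$, and $f_\beta(e_k)=-\tfrac1\lambda\kappa_\beta(e_k,e_0)$ if $\lambda\neq0$. Both can be written directly as $f(e_k):=-\kappa(e_k,e_1)$, respectively $f(e_k):=-\tfrac1\lambda\kappa(e_k,e_0)$, with values in $M$. Their derivation only used the cocycle identities \eqref{eq:Z2-2Zero}, \eqref{eq:Z2-2nonZero}, which hold verbatim with $M$-valued $\kappa$. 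Therefore $\kappa=\hat d^1_\nov(f)$ exactly as in the example.

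Finally, apply the preceding theorem. Since $[\kappa]=0=[0]$, the extension $\g_\lambda\oplus_{0,0,\kappa}M$ is isomorphic to $\g_\lambda\oplus_{0,0,0}M$, the trivial (split) central extension. Hence $\g_\lambda$ has no non-trivial central extensions.
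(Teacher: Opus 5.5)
Your proposal is correct and follows the route the paper intends. The paper states the corollary without proof, as an immediate consequence of $\huaH^2(\g_\lambda;\mathds K)=0$ from Example~\ref{exmp:Polynomial} combined with the classification theorem for abelian extensions, where central extensions are exactly the abelian extensions with $l=r=0$. You also observe that the explicit primitives $f(e_k)=-\kappa(e_k,e_1)$ (resp.\ $f(e_k)=-\frac{1}{\lambda}\kappa(e_k,e_0)$) use only the $(1,0,2)$-component identities, so they work verbatim for $M$-valued cocycles; this covers central extensions by an arbitrary trivial module $M$, a case the paper leaves implicit.
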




\noindent
{\bf Acknowledgements. } This research is supported by  NSF of Jilin Province (20260101013JJ) and NSFC (12471060, W2412041).  The first author was supported by RFS (project 25-41-00005).

\smallskip
\noindent
{\bf Declaration of interests. } The authors have no conflicts of interest to disclose.

\smallskip
\noindent
{\bf Data availability. } Data sharing is not applicable as no data were created or analyzed.

\end{document}